\DeclareMathAlphabet{\mathpzc}{OT1}{pzc}{m}{it}
\numberwithin{equation}{section}
\begin{document}

\theoremstyle{plain}
\newtheorem*{convention}{Convention}
\newtheorem{theorem}{Theorem}[section]
\newtheorem{lemma}[theorem]{Lemma}
\newtheorem{proposition}[theorem]{Proposition}
\newtheorem{corollary}[theorem]{Corollary}
\newtheorem{definition}[theorem]{Definition}
\theoremstyle{definition}
\newtheorem{remark}[theorem]{Remark}
\newtheorem{listing}[theorem]{Assumption}

\newtheorem{example}[theorem]{Example}

\crefname{lemma}{lemma}{lemmas}
\Crefname{lemma}{Lemma}{Lemmata}
\crefname{corollary}{corollary}{corollaries}
\Crefname{corollary}{Corollary}{Corollaries}
\crefname{example}{example}{examples}
\Crefname{example}{Example}{Examples}
\crefname{listing}{assumption}{assumptions}
\Crefname{listing}{Assumption}{Assumptions}

\makeatletter
\@namedef{subjclassname@2020}{%
	\textup{2020} Mathematics Subject Classification}
\makeatother

\newcommand{\1}{\mathbb{I}}
\newcommand{\X}{\mathsf{X}} 
\newcommand{\D}{{\mathbb{D}}}
\newcommand{\Df}{\mathbf{D}}
\renewcommand{\epsilon}{\varepsilon}
\newcommand{\B}{\mathbb{B}}
\newcommand{\C}{\mathbb{C}}
\newcommand{\Z}{\mathsf{Z}}
\renewcommand{\t}{\mathfrak{t}}
\newcommand{\s}{\mathfrak{s}}
\newcommand{\f}{\mathfrak{f}}
\newcommand{\g}{\mathfrak{g}}
\newcommand{\cH}{\mathcal{H}}
\newcommand{\cF}{\mathcal{F}}

\makeatletter
\newcommand{\mylabel}[2]{#2\def\@currentlabel{#2}\label{#1}}
\makeatother

\renewcommand{\mathcal}{\mathscr}

\title[Propagation of Chaos for Interacting SPDEs]{Propagation of Chaos for Weakly Interacting Mild Solutions to Stochastic Partial Differential Equations} 
\author[D. Criens]{David Criens}
\address{University of Freiburg, Ernst-Zermelo-Str. 1, 79104 Freiburg, Germany}
\email{david.criens@stochastik.uni-freiburg.de}

\keywords{Interacting Particle System, Propagation of Chaos, McKean--Vlasov Equation, SPDE, Weak Solution, Martingale Solution, Uniqueness in Law, Pathwise Uniqueness, Compact Semigroup, Factorization Method, Compactness Method\vspace{1ex}}

\subjclass[2020]{60H15, 60G48, 60B10}

\thanks{The author is grateful to an anonymous person for helpful remarks which lead to the formulation of \ref{A3}.}

\date{\today}
\maketitle

\frenchspacing
\pagestyle{myheadings}

\begin{abstract}
This article investigates the propagation of chaos property for weakly interacting mild solutions to semilinear stochastic partial differential equations whose coefficients might not satisfy Lipschitz conditions.
Furthermore, we derive continuity and linear growth conditions for the existence and uniqueness of mild solutions to SPDEs with distribution dependent coefficients, so-called McKean--Vlasov SPDEs.
\end{abstract}

\section{Introduction}
A system of particles can be modeled as interacting stochastic processes. When the number \(N\) of particles gets large the process level usually contains too much information for a statistical description and it is interesting to change the point of view by passing to the macroscopic picture which means looking at the system on an average level. More specifically, the idea is to consider the empirical distribution of the particles and to study its limiting behavior when the number of particles tends to infinity. Under suitable assumptions on the system it is often possible to describe the limit via a so-called \emph{McKean--Vlasov (MKV) equation}. The macroscopic behavior is also closely related to the so-called \emph{propagation of chaos property}, which roughly speaking means that an asymptotic i.i.d. property of the initial distributions propagates to later times.

\subsection{Contributions of the article}
In this paper we study two questions related to MKV limits of interacting stochastic partial differential equations (SPDEs). First, we consider an \(N\)-particle system \(X^{N, 1}, \dots, X^{N, N}\) given by the weakly interacting SPDEs
\[
d X^{N, i}_t = A X^{N, i}_t dt + \mu (t, X^{N, i}_t, \mathcal{X}^N_t) dt + \sigma (t, X^{N, i}_t, \mathcal{X}^N_t) d W^i_t,\quad i = 1, \dots, N,
\]
where
\[
\mathcal{X}^N_t \triangleq \frac{1}{N} \sum_{i = 1}^N \delta_{X^{N, i}_t}, \quad t \in \mathbb{R}_+, 
\]
are the empirical distributions, \(A\) is the generator of a \(C_0\)-semigroup \(S = (S_t)_{t \geq 0}\) and \(W^1, \dots, W^N\) are independent standard cylindrical Brownian motions. 
The natural candidate for a MKV limit of this particle system is the law of the MKV SPDE
\begin{align} \label{eq: MKV SPDE introduction}
d X_t = A X_t dt + \mu (t, X_t, P^X_t) dt + \sigma (t, X_t, P^X_t) d W_t,  
\end{align}
where \(P^X_t\) denotes the law of \(X_t\) and \(W\) is a standard cylindrical Brownian motion.
Under certain assumptions on the initial distribution, compactness of \(S\), and linear growth and continuity assumptions on the coefficients \(\mu\) and \(\sigma\), 
the first main contribution (\Cref{theo: POC new1}) of this paper is the following: 

If the MKV SPDE \eqref{eq: MKV SPDE introduction} satisfies uniqueness in law, then a unique law \(\mathcal{X}^0\) exists and \(\mathcal{X}^N\to \mathcal{X}^0\) in mean where \(\mathcal{X}^N\) and \(\mathcal{X}^0\) are considered as random variables with values in a Wasserstein space of probability measures on \(C([0, T], E)\), where \(T > 0\) is an arbitrary finite time horizon and \(E\) is the state space of the particles.
Furthermore, we provide a propagation of chaos result and we derive similar results (\Cref{theo: POC}) under Lipschitz conditions without a compactness assumption on the semigroup \(S\).

Besides interacting SPDEs we also investigate weak existence, pathwise uniqueness and uniqueness in law for the MKV SPDE as given in \eqref{eq: MKV SPDE introduction}.
More precisely, in \Cref{theo: main} we prove weak existence under a continuity (with the weak topology for the measure variable) and a linear growth assumption on \(\mu\) and \(\sigma\) and a compactness condition on the semigroup \(S\). 
For suitably integrable initial data, we replace the weak topology in the continuity assumption by a Wasserstein topology and we also relax the linear growth conditions, see \Cref{theo: main wasserstein cont}. Furthermore, in \Cref{theo: uni} we establish pathwise uniqueness and uniqueness in law under a modified Lipschitz condition.
Finally, in \Cref{theo: Lip} we provide an existence and uniqueness result under a classical Lipschitz condition which requires no additional assumptions on the semigroup~\(S\).

\subsection{Comments on related literature}
 For finite dimensional equations convergence to the MKV limit was systematically studied in \cite{https://doi.org/10.1002/mana.19881370116}.
For weakly interacting one-dimensional stochastic heat equations with Lipschitz coefficients which are linear in the measure variable, propagation of chaos was proved in \cite{M96}. The heat equation is included in our framework, see \Cref{ex: 1}.
Using our notation from above, convergence to the MKV limit for coefficients of the type
\begin{align}\label{eq: intro str coef}
\mu (t, x, \nu) \equiv \mu_1 (t, x) + \int \mu_2 (x, y) \nu(dy), \qquad \sigma (t, x, \nu) \equiv \sigma (t, x),
\end{align}
was proved in \cite[Theorem~5.3]{BKKX98} under assumptions on the initial distributions which are similar to ours (see Condition \ref{I} below and Eq. (3.10) in \cite{BKKX98}), certain assumptions on \(A\), see \Cref{rem: comp A} for comments, and linear growth and Lipschitz conditions on \(\mu_1, \mu_2\) and~\(\sigma\). Notice that the diffusion coefficient in \eqref{eq: intro str coef} is independent of the measure variable and that the drift coefficient depends linearly on it. In this paper we present results for more general coefficients \(A, \mu\) and \(\sigma\). In particular, in our main \Cref{theo: POC new1} we impose no Lipschitz conditions. For i.i.d. initial data the propagation of chaos result \cite[Theorem~5.3]{BKKX98} is covered by \Cref{theo: POC}, whose proof appears to us more straightforward. More precisely, it adapts the finite dimensional case (\cite{Lak18}).
Compared to \cite{BKKX98}, we establish stronger convergence results as we prove convergence in mean for random variables in a Wasserstein space, while convergence in probability and the weak topology are used in \cite{BKKX98}.
The basic structure of the proof for \cite[Theorem~5.3]{BKKX98} is similar to those of \Cref{theo: POC new1} in the sense that we also prove tightness and then use a martingale problem argument. The proof for tightness in~\cite{BKKX98} is an adaption of Kolmogorov's tightness criterion, while we use the compactness method from \cite{doi:10.1080/17442509408833868}. In addition, we prove tightness for random variables with values in a suitable Wasserstein space which is not done in \cite{BKKX98}.

Various existence and uniqueness results for MKV SPDEs were proved in \cite{AHMED199565,BKKX98,Gov15,HUANG2021112167,Ren21}.
With the exception of \cite{HUANG2021112167}, the conditions in these references are of Lipschitz type. \Cref{theo: main} is closely related to \cite[Theorem~2.1]{HUANG2021112167}, which is an existence result for MKV SPDEs with uniformly bounded continuous coefficients (where the weak topology is used for the measure variable). Compared to this theorem, we require less assumptions on the parameters \(A, \mu\) and \(\sigma\) (see \Cref{rem: comp A} for some comments). 
Furthermore, \Cref{theo: main wasserstein cont} extends \cite[Theorem 2.1]{HUANG2021112167} in the direction that it only requires a continuity assumption for a Wasserstein instead of the weak topology. Thanks to this extension our result covers for instance linear (in the measure variable) coefficients of the type
\[
\mu (t, x, \nu) \equiv \int \mu^\circ (t, x, y) \nu(dy)
\]
for unbounded \(\mu^\circ\).
Similar to those of \cite[Theorem 2.1]{HUANG2021112167}, the proof of \Cref{theo: main} relies on an approximation scheme and the compactness method from \cite{doi:10.1080/17442509408833868}. In contrast to \cite{HUANG2021112167}, we use a martingale problem argument to identify the limit and we establish moment estimates to reduce assumptions on \(\mu\) and \(\sigma\). The martingale problem argument is robust w.r.t. the linearity \(A\), while the argument in \cite{HUANG2021112167} uses some properties of \(A\). 
When compared to existence results for classical SPDEs, \Cref{theo: main} can be viewed as an extension of the main results from \cite{doi:10.1080/17442509408833868} to a McKean--Vlasov framework.
For finite dimensional MKV equations, general existence and uniqueness results were proved in~\cite{Funkai84}. 
Our uniqueness result extends a theorem from \cite{Funkai84} and we also adapt the basic proof strategy to our infinite dimensional setting.

\subsection{Structure of the article and comments on notation}

The article is structured as follows: In \Cref{sec: main} we introduce our setting and present existence and uniqueness results for MKV SPDEs. The convergence of the particle system to its MKV limit and the propagation of chaos property are discussed in \Cref{sec: chaos main}. In the remaining sections we present the proofs for our results.

Before we turn to the main body of this paper, let us also comment on notation and terminology. In general, we follow the seminal monograph of Da Prato and Zabczyk \cite{DePrato}. We also refer to this monograph for background information on stochastic integration in infinite dimensions. Further standard references on infinite dimensional stochastic analysis are the monographs~\cite{gawarecki2010stochastic,roeckner15}.

\begin{convention}
If not indicated otherwise, \(C > 0\) denotes a generic constant which is allowed to depend on all fixed parameters in the respective context. We also use the convention that \(C\) might change from line to line. 
\end{convention}

\section{Existence and Uniqueness of McKean--Vlasov SPDEs} \label{sec: main}
Let \(E = (E, \langle \cdot, \cdot \rangle_E, \|\cdot\|_E)\) and \(H = (H, \langle \cdot, \cdot \rangle_H, \|\cdot \|_H)\) be separable real Hilbert spaces, denote the space of linear bounded operators \(H \to E\) by \(L (H, E)\) and the space of Hilbert--Schmidt operators \(H \to E\) by \(L_2 (H, E)\). Let \(M_c(E)\) be the space of probability measures on \((E, \mathcal{B}(E))\) endowed with the weak topology, i.e. the topology of convergence in distribution.
For \(p \geq 1\) let \(M^p_w(E)\) be the set of all \(\nu \in M_c(E)\) such that 
\[
\|\nu \|_p \triangleq \Big( \int \|y\|^p_E \nu(dy)\Big)^{1/p} < \infty.
\]
We endow \(M^p_w (E)\) with the \(p\)-Wasserstein topology (\cite[Section 5.1]{Carmona18}), which turns \(M^p_w(E)\) into a Polish space. 
Next, we introduce a quadruple \((A, \mu, \sigma, \eta)\) of \emph{coefficients}.
\begin{enumerate}
	\item[(i)] Let \(A \colon D(A) \subseteq E \to E\) be the generator of a \(C_0\)-semigroup \(S = (S_t)_{t \geq 0}\) on \(E\) and denote its adjoint by \(A^* \colon D(A^*) \subseteq E \to E\).
	\item[(ii)]  Let \(\mu \colon \mathbb{R}_+ \times E \times M_c(E) \to E\) and \(\sigma \colon \mathbb{R}_+ \times E \times M_c(E) \to L (H, E)\) be Borel functions. To be precise, for \(\sigma\) we mean that for every \(h \in H\) the \(E\)-valued function \(\sigma h\) is Borel.
	\item[(iii)] Let \(\eta \in M_c(E)\).
\end{enumerate}
In the following we use the notation \(P^X_t \triangleq P \circ X^{-1}_t\) for \(t \in \mathbb{R}_+\).
\begin{definition}
	We call a triplet \((\B, W, X)\) a \emph{martingale solution} to the MKV SPDE with coefficients \((A, \mu, \sigma, \eta)\) if \(\B\) is a filtered probability space with right-continuous and complete filtration which supports a standard cylindrical Brownian motion \(W\) and a continuous \(E\)-valued adapted process \(X\) such that the following hold:
	\begin{enumerate}
\item[\textup{(i)}]	\(X_0 \sim \eta\), i.e. \(X_0\) has law \(\eta\).
\item[\textup{(ii)}]	 Almost surely for all \(t \in \mathbb{R}_+\)
	\begin{align*}
	\int_0^t \|S_{t - s} \mu(s, X_s, P^X_s)\|_E ds + \int_0^t \|S_{t - s} \sigma(s, X_s, P^X_s)\|_{L_2 (H, E)}^2 ds < \infty.
	\end{align*}
	\item[\textup{(iii)}]	Almost surely for all \(t \in \mathbb{R}_+\)
	\begin{equation*} 
	\begin{split}
	X_t = S_t X_0 + \int_0^t S_{t - s} \mu(s, X_s, P^X_s) ds + \int_0^t S_{t - s} \sigma(s, X_s, P^X_s) d W_s.
	\end{split}
	\end{equation*}
	\end{enumerate}
	We call \(X\) a \emph{solution process} and its law, seen as a Borel probability measure on \(C(\mathbb{R}_+, E)\) endowed with the local uniform topology, a \emph{solution measure}. The pair \((\B, W)\) is called a \emph{driving system}. Furthermore, for \(p \geq 1\) we call the solution measure a \emph{\(p\)-solution measure} if \(P^X_t \in M^p_w(E)\) for all \(t \in \mathbb{R}_+,\) and \((t \mapsto P^X_t) \in C(\mathbb{R}_+, M^{p}_w(E))\). In the same manner, we say that \(X\) is a \emph{\(p\)-solution process} if its law is a \(p\)-solution measure and in this case we call \((\B, W, X)\) a \emph{\(p\)-martingale solution}.
\end{definition}
\begin{remark}
	In case one is only interested in \(p\)-martingale solutions it suffices that \(\mu\) and \(\sigma\) are defined on \(\mathbb{R}_+ \times E \times M^p_w(E)\). Of course, in this case also the initial law \(\eta\) has to be taken from~\(M^p_w(E)\).
\end{remark}

From now on we fix \[0 < \alpha < 1/2 \quad \text{ and } \quad p' > 1/\alpha.\]
Let \((A, \mu, \sigma, \eta)\) be coefficients for a MKV SPDE. 
We formulate the following conditions:
\begin{enumerate}
	\item[\mylabel{A1}{\textup{(A1)}}] 
	\(A\) generates a compact \(C_0\)-semigroup \(S = (S_t)_{t \geq 0}\), i.e. \(S_t\) is compact for all \(t > 0\).
	\item[\mylabel{A2}{\textup{(A2)}}] 
For all \(y^* \in D(A^*)\) and \(t > 0\) the maps	\(\langle \mu (t, \cdot, \cdot), y^*\rangle_E\) and \(\| \sigma^*(t, \cdot, \cdot) y^*\|_H\) are continuous on \(E \times M_c (E)\). 
	\item[\mylabel{A3}{\textup{(A3)}}] 
For every \(T > 0\) there exists a Borel function \(\f = \f_T\colon (0, T] \to [0, \infty]\) and a constant \(C_T > 0\) such that 
\[
\int_0^T \Big[\frac{\f (s) }{s^{\alpha}} \Big]^2 ds < \infty,
\]
and 
\begin{align*}
\|S_t \sigma (s, x, \nu)\|_{L_2(H, E)}& \leq \f (t) \big(1 + \|x\|_E\big),
\end{align*}
and
\begin{align}
\label{eq: LG mit op norm}
\| \mu (s, x, \nu) \|_E + \|\sigma (s, x, \nu)\|_{L(H, E)} &\leq C_T \big(1 + \|x\|_E\big),
\end{align}
for all \(0 < t, s \leq T, x \in E\) and \(\nu \in M_c(E)\).
\end{enumerate}
\begin{remark}
In \ref{A3} the dependence of the function \(\f\) on a time horizon \(T > 0\) localizes the time variable of the coefficient \(\sigma\).
\end{remark}
	Condition \ref{A3} is closely connected to the factorization method of Da Prato, Kwapien, and Zabczyk~\cite{doi:10.1080/17442508708833480}, which we also use in our proofs. 
In the following remark we relate \ref{A3} to some conditions appearing in the literature on MKV SPDEs.
\begin{remark} \label{rem: comp A} \quad
	\begin{enumerate}
	\item[\textup{(i)}]
	In case \eqref{eq: LG mit op norm} holds, \ref{A3} is implied by
		\begin{align}\label{eq: fact cond semi}
		\int_0^t \frac{\|S_{s}\|_{L_2(E)}^2ds}{s^{2 \alpha}} < \infty, \quad \forall t > 0,
	\end{align}
	which is a classical condition appearing for instance in \cite{DePratoEd1,doi:10.1080/17442509408833868} for SPDEs without measure dependence.

Suppose that \(A\) is a negative definite self-adjoint operator\footnote{A negative definite self-adjoint operator generates a contraction semigroup, see \cite[Proposition 6.14]{Schmu12}.} and that there exists a \(\delta \in (0, 1)\) such that \((-A)^{-1 + \delta}\) is of trace class, i.e. 
	\begin{align}\label{eq: negative definite integr}
	\sum_{k = 1}^\infty \lambda_k^{-1 + \delta} < \infty,
	\end{align}
	where \(0 < \lambda_1 \leq \lambda_2 \leq \cdots\) are all eigenvalues of \(-A\), counting multiplicities, with \(- A e_k = \lambda_k e_k\) for an orthonormal basis \((e_k)_{k \in \mathbb{N}}\) of \(E\).
	This assumption appears as (a1) in \cite{HUANG2021112167}. 
	If it is in force, \eqref{eq: fact cond semi} holds
	 with \(\alpha = \delta/2\), since
	\begin{align*}
\int_0^t \frac{\|S_s\|^2_{L_2(E)} ds}{s^{\delta}}= \sum_{k = 1}^\infty \int_0^t \frac{e^{- 2\lambda_k s} ds}{s^{\delta}}
\leq  \int_0^\infty\frac{e^{-2z} dz}{z^{\delta}} \sum_{k = 1}^\infty  \lambda_k^{-1 + \delta} < \infty, \quad t > 0.
	\end{align*}
	
	In the paper \cite{BKKX98} the linearity \(A\) is also assumed to be negative definite and self-adjoint but only a weaker form of \eqref{eq: negative definite integr} is imposed. To give some details, for the diffusion coefficient \(\sigma (t, x, \nu) \equiv \sigma (t, x)\) it is assumed that there are non-negative numbers \(b_1, b_2, \dots\) such that
	\[
	\| \sigma^* (t, x) e_k \|_H^2 \leq b_k^2 \big(1 + \|x\|^2_E\big), \quad t \leq T, \ k = 1, 2, \dots,
	\]
	and 
	\[
	\sum_{k = 1}^\infty b^2_k\lambda_k^{-\theta} < \infty
	\]
	for some \(\theta \in (0, 1)\), cf. Eq. (2.44) in \cite{BKKX98}. 
	Under this condition the first part of \ref{A3} holds with \[\f (t) = \f_T(t) \triangleq C \Big(\sum_{k = 1}^\infty e^{- 2\lambda_k t} b_k^2\Big)^{1/2}, \quad t \in (0, T],\]
	and \(\alpha \triangleq (1 - \theta)/2 \in (0, 1/2)\), since
	\[
\|S_t \sigma (s, x)\|_{L_2 (H, E)}^2 = \sum_{k = 1}^\infty \|S_t \sigma(s, x) e_k\|^2_E \leq \sum_{k = 1}^\infty e^{- 2\lambda_k t} b_k^2 (1 + \|x\|_E^2),
\]
for \(0 < t, s \leq T, x \in E\), 
	and
	\[
	\int_0^t \Big[\frac{\f (s) }{s^{\alpha}} \Big]^2 ds = \sum_{k = 1}^\infty \int_0^t \frac{e^{- 2 \lambda_k s} b^2_k ds}{s^{1 - \theta}} \leq \int_0^\infty \frac{e^{- 2 z} dz}{z^{1 - \theta}} \sum_{k = 1}^\infty b^2_k \lambda_k^{- \theta} < \infty, \quad t \leq T.
	\]

	In a certain sense, \eqref{eq: fact cond semi} is close to optimal for the existence of solutions to Cauchy problems. More precisely, the stochastic Cauchy problem
	\[
	d X_t = A X_t dt + d W_t
	\]
	has a mild solution (with not necessarily continuous paths) if and only if 
	\begin{align} \label{eq: A1 with alpha = 0}
	\int_0^t \|S_t\|^2_{L_2(E)} dt < \infty, \quad t > 0,
	\end{align}
	see \cite[Theorem 7.1]{Ner05}.
	Moreover, if \(-A\) is self-adjoint, non-negative definite and there exists a complete orthonormal family of eigenvectors corresponding to its set of positive eigenvalues, then \eqref{eq: A1 with alpha = 0} is sufficient for the existence of a mild solution with continuous paths, see \cite[Theorem 1]{Iscoe90}.
	\item[\textup{(ii)}]
		Another classical type of linear growth condition is the following: For every \(T > 0\) there exists a constant \(C_T > 0\) such that 
	\[
	\| \mu (s, x, \nu) \|_E + \|\sigma (s, x, \nu)\|_{L_2(H, E)} \leq C_T \big(1 + \|x\|_E\big)
	\]
	for all \(s \leq T, x \in E\) and \(\nu \in M_c(E)\).
	The difference to \eqref{eq: LG mit op norm} is that this condition uses the Hilbert--Schmidt norm instead of the operator norm. Under this condition, \ref{A3} holds with \(\f (t) = \|S_t\|_{L(E)}\) for \(t > 0\), as there are constants \(M \geq 1\) and \(\omega \in \mathbb{R}_+\) such that \(\|S_t\|_{L (E)} \leq Me^{\omega t}\) for all \(t > 0\).
	\end{enumerate}
\end{remark}
	Our first main result is the following:
\begin{theorem} \label{theo: main}
	Suppose that \ref{A1}, \ref{A2} and \ref{A3} hold. Then, for every \(\eta \in M_c(E)\) there exists a martingale solution to the MKV SPDE with coefficients \((A, \mu, \sigma, \eta)\). 
\end{theorem}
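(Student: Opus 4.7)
The plan is to combine a time-discretization ``freeze the measure'' approximation scheme with the compactness method of~\cite{doi:10.1080/17442509408833868} and a martingale problem argument to identify the limit. For each $n \in \mathbb{N}$ fix a mesh $t^n_k \triangleq kT/n$ on $[0,T]$ and write $\kappa_n(s) = t^n_k$ for $s \in [t^n_k, t^n_{k+1})$. Inductively on the grid I would construct a weak mild solution $(X^n, W^n)$ of
\[
X^n_t = S_t X_0 + \int_0^t S_{t-s}\mu\bigl(s, X^n_s, P^{X^n}_{\kappa_n(s)}\bigr) ds + \int_0^t S_{t-s}\sigma\bigl(s, X^n_s, P^{X^n}_{\kappa_n(s)}\bigr) dW^n_s,
\]
with $X^n_0 \sim \eta$. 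On each subinterval the measure argument is frozen at the already-determined law $P^{X^n}_{t^n_k}$, so this reduces to a classical non-MKV SPDE whose coefficients satisfy \ref{A1}--\ref{A3}; weak existence on $[t^n_k, t^n_{k+1}]$ is then provided by the compactness method of~\cite{doi:10.1080/17442509408833868}.

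Next I would derive uniform moment bounds $\sup_n E[\sup_{t \le T}\|X^n_t\|_E^2] < \infty$ from the factorization representation of the stochastic convolution, using the integrability of $\f/s^\alpha$ in \ref{A3} together with Gr\"onwall's inequality. These bounds, combined with the compactness of the semigroup \ref{A1} and the Da Prato--Kwapien--Zabczyk factorization identity, yield tightness of the laws of $\{X^n\}$ in $C([0,T], E)$: one writes the stochastic convolution as the image of a bounded-in-$L^p$ process under the compact operator $u \mapsto \int_0^\cdot (\cdot - s)^{\alpha - 1} S_{\cdot - s} u(s) ds$, and the deterministic drift term is handled analogously. Joint tightness of $(X^n, \nu^n)$, where $\nu^n \colon t \mapsto P^{X^n}_t$, in $C([0,T], E) \times C([0,T], M_c(E))$ follows because the moment bound entails equicontinuity of $t \mapsto P^{X^n}_t$ uniformly in $n$.

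I would then pass to the limit along a subsequence. By the Skorokhod representation theorem I may assume $X^n \to X$ in $C([0,T], E)$ and $\nu^n \to \nu$ in $C([0,T], M_c(E))$ almost surely on a new probability space; uniqueness of the marginal law of a weakly convergent sequence forces $\nu_t = P^X_t$. The limit is then identified via the martingale problem: for each $y^* \in D(A^*)$ the process
\[
M^{n, y^*}_t = \langle X^n_t, y^*\rangle_E - \langle X_0, y^*\rangle_E - \int_0^t \bigl[\langle X^n_s, A^* y^*\rangle_E + \langle \mu(s, X^n_s, \nu^n_{\kappa_n(s)}), y^*\rangle_E\bigr] ds
\]
is a continuous martingale with quadratic variation $\int_0^t \|\sigma^*(s, X^n_s, \nu^n_{\kappa_n(s)}) y^*\|_H^2 ds$. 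The continuity hypothesis \ref{A2}, together with uniform integrability from the moment bound, lets me pass to the limit in the drift and quadratic-variation integrals, giving that $X$ satisfies the corresponding martingale problem with $\nu_t = P^X_t$. A standard martingale representation theorem on an enlargement of the probability space produces the driving cylindrical Brownian motion $W$, and reading the mild formulation back from the martingale problem delivers a martingale solution with coefficients $(A, \mu, \sigma, \eta)$.

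The chief obstacle is the tightness step in $C([0,T], E)$: the stochastic convolution lives in an infinite-dimensional Hilbert space and is not automatically relatively compact. The interplay of compactness of $S_t$ from \ref{A1} with the factorization integrability built into \ref{A3} is exactly what rescues precompactness. A second subtlety is that the approximation must be anchored to the marginals of $X^n$ itself, rather than an externally prescribed flow, so that the identification $\nu_t = P^X_t$ is automatic in the limit and no separate fixed-point argument on measure flows is needed.
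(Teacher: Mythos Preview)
Your overall strategy---approximate, prove tightness via factorization and compactness, identify the limit through a martingale problem, then represent---matches the paper. But two points need repair.

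\textbf{The initial law has no moments.} Your claimed bound $\sup_n E[\sup_{t\le T}\|X^n_t\|^2_E]<\infty$ is false for general $\eta\in M_c(E)$, since already $\int\|x\|_E^2\,\eta(dx)$ may be infinite. The theorem is stated for \emph{arbitrary} $\eta$, and the paper handles this by truncating the initial value, setting $X^n_0=\xi_0$ on $\{\|\xi_0\|_E\le n\}$ and $0$ otherwise, and then proving $p'$-moment bounds only on events $\{X^n_0\in K\}$ for bounded $K\subset E$. This localization is carried into the martingale-problem step by multiplying the test functionals by a cutoff $f(\|\X_0\|_E)$ with $f\in C_c(\mathbb R)$; uniform integrability of the test martingales and the tightness estimates are all conditioned on such events. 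Without this device neither your Gr\"onwall step nor the passage to the limit in the martingale problem goes through. (A smaller point: the factorization estimate needs exponent $p'>1/\alpha>2$, not $p=2$.)

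\textbf{The approximation scheme.} You freeze only the measure argument, whereas the paper freezes \emph{both} the state and the measure at grid times, taking $\mu^n(t,\omega,\nu)=\mu(t,\omega(\lfloor t2^n\rfloor 2^{-n}),\nu)$ with $\nu=P^{X^n}_{\lfloor t2^n\rfloor 2^{-n}}$. With both frozen the integrands on each subinterval are $\cF_{k2^{-n}}$-measurable, so $X^n$ is given explicitly as a stochastic convolution and the entire family lives on one fixed driving system. Your scheme instead requires, on each subinterval, a \emph{weak} solution of a classical SPDE with merely continuous coefficients; the result of~\cite{doi:10.1080/17442509408833868} supplies such solutions but on possibly different filtered spaces, and concatenating them into a single process $X^n$ on a common space is a nontrivial step you have not addressed. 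The paper's simpler freezing avoids this pasting issue entirely.
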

The proof of \Cref{theo: main} is given in \Cref{sec: pf theo 1}. Let us explain two typical situations where the above theorem can be applied.

\begin{example}[\cite{doi:10.1080/17442509408833868}] \label{ex: 1}
Let \(\mathcal{O}\) be a bounded region in \(\mathbb{R}^d\) with smooth boundary and set \(E \triangleq L^2(\mathcal{O})\). If \(A\) is a strongly elliptic operator of order \(2m > d\) (with Dirichlet boundary conditions), then there exists an \(\alpha\) such that \eqref{eq: fact cond semi} holds, see \cite[Example 3]{doi:10.1080/17442509408833868}. Thus, by virtue of part (i) of \Cref{rem: comp A}, \Cref{theo: main} covers for instance the McKean--Vlasov stochastic heat equation 
\[
d X_t = \Delta X_t dt + \mu(t, X_t, P^X_t) dt + \sigma (t, X_t, P^X_t) d W_t
\]
with white noise \(W\) in case \(d = 1\).
\end{example}

\begin{example}
	Let \(E = L^2(\mathcal{O})\) be as in \Cref{ex: 1}. If \(A\) is a strongly elliptic second order operator (with Dirichlet boundary conditions), then \ref{A1} holds, see \cite[Remark 1]{doi:10.1080/17442509408833868} and \cite[Appendix A.5.2]{DePrato}.
	Thus, by virtue of part (ii) of \Cref{rem: comp A}, \Cref{theo: main} applies for instance to the McKean--Vlasov stochastic heat equation 
	\[
	d X_t = \Delta X_t dt + \mu(t, X_t, P^X_t) dt + \sigma (t, X_t, P^X_t) d B_t
	\]
	with colored noise \(B\) independent of the dimension \(d\) (recall that \(\mathcal{O} \subset \mathbb{R}^d\)).
\end{example}

Provided the initial value satisfies a suitable integrability condition, we can relax the continuity assumptions on \(\mu\) and \(\sigma\) in the measure variable by replacing the weak with a Wasserstein topology. Furthermore, we can strengthen the linear growth condition. Take \(1 \leq p^\circ < p'\).
	\begin{enumerate}
	\item[\mylabel{A4}{\textup{(A4)}}] 
	For all \(y^* \in D(A^*)\) and \(t > 0\) the maps	\(\langle \mu (t, \cdot, \cdot), y^*\rangle_E\) and \(\| \sigma^*(t, \cdot, \cdot) y^*\|_H\) are continuous on \(E \times M_w^{p^\circ} (E)\).
	\item[\mylabel{A5}{\textup{(A5)}}]
For every \(T > 0\) there exists a Borel function \(\f = \f_T \colon (0, T] \to [0, \infty]\) and a constant \(C_T > 0\) such that 
\[
\int_0^T \Big[\frac{\f (s) }{s^{\alpha}} \Big]^2 ds < \infty, 
\]
	and 
	\begin{align*}
	\|S_t \sigma (s, x, \nu)\|_{L_2(H, E)} &\leq \f (t) \big(1 + \|x\|_E + \|\nu\|_{p'}\big),
\\
	\|\mu (t, x, \nu)\|_E + \|\sigma(t, x, \nu)\|_{L(H,E)} &\leq C_T \big(1 + \|x\|_E + \|\nu\|_{p'}\big),
	\end{align*}
		for all \(0 < t, s \leq T, x \in E\) and \(\nu \in M^{p'}_w (E)\).
\end{enumerate}
\begin{theorem} \label{theo: main wasserstein cont}
Suppose that \(\eta \in M^{p'}_w(E)\) and that \(\mu\) and \(\sigma\) are only defined on \(\mathbb{R}_+ \times E \times M^{p^\circ}_w(E)\). 
Furthermore, assume that \ref{A1}, \ref{A4} and \ref{A5} hold.
Then, the MKV SPDE \((A, \mu, \sigma, \eta)\) has a \(p'\)-martingale solution.
\end{theorem}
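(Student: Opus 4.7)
The plan is to reduce to \Cref{theo: main} by truncating the measure variable to bounded \(p'\)-Wasserstein norm, and then to pass to the limit by exploiting the gap \(p^\circ < p'\). For each \(n \in \mathbb{N}\), let \(T_n \colon E \to E\) be the radial projection onto the closed ball of radius \(n\) (so \(T_n(x) = x\) for \(\|x\|_E \leq n\) and \(T_n(x) = n x / \|x\|_E\) otherwise); it is \(1\)-Lipschitz and norm-decreasing. Set \(\psi_n(\nu) \triangleq \nu \circ T_n^{-1}\), which gives a weakly continuous map \(M_c(E) \to M_c(E)\) with image in \(\{\|\cdot\|_{p'} \leq n\}\) and satisfying \(\|\psi_n(\nu)\|_{p'} \leq \|\nu\|_{p'}\). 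Define, for \(\nu \in M_c(E)\),
\[
\mu_n(t, x, \nu) \triangleq \mu(t, x, \psi_n(\nu)), \qquad \sigma_n(t, x, \nu) \triangleq \sigma(t, x, \psi_n(\nu)).
\]
If \(\nu_k \to \nu\) weakly, then \(\psi_n(\nu_k) \to \psi_n(\nu)\) weakly with uniformly bounded support, hence in \(M^{p^\circ}_w(E)\); combined with \ref{A4} this yields \ref{A2}. Linear growth \ref{A3} is inherited from \ref{A5} with \(\f_T\) replaced by \((1 + n)\f_T\), which remains square-integrable against \(s^{-2\alpha}ds\). \Cref{theo: main} then produces a martingale solution \((\B^n, W^n, X^n)\) to the truncated MKV SPDE \((A, \mu_n, \sigma_n, \eta)\).

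The central a priori estimate is \(\sup_n \mathbb{E}[\sup_{t \leq T} \|X^n_t\|_E^{p'}] < \infty\). Writing \(X^n\) in mild form and applying the factorization method of Da Prato--Kwapien--Zabczyk \cite{doi:10.1080/17442508708833480} to the stochastic convolution (legitimate because \(p' > 1/\alpha\)), the pointwise bound \(\|\psi_n(P^{X^n}_s)\|_{p'} \leq (\mathbb{E}[\|X^n_s\|_E^{p'}])^{1/p'}\) combined with the linear growth in \ref{A5} produces, after a standard localisation, a closed integral inequality to which Gronwall's lemma applies. Because \(\eta \in M^{p'}_w(E)\), the resulting bound is independent of \(n\).

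Tightness of \((X^n)\) in \(C(\mathbb{R}_+, E)\) then follows from the compactness method of \cite{doi:10.1080/17442509408833868} together with the uniform moment bound and \ref{A5}; let \(X\) be a subsequential limit (after a Skorokhod-type representation). The uniform \(p'\)-moment bound upgrades the weak convergence \(P^{X^n}_s \to P^X_s\) to convergence in \(M^{p^\circ}_w(E)\) (using \(p^\circ < p'\)), and
\[
W_{p^\circ}\bigl(\psi_n(P^{X^n}_s), P^{X^n}_s\bigr)^{p^\circ} \leq \int_{\{\|x\|_E > n\}} \|x\|_E^{p^\circ}\, dP^{X^n}_s(x) \leq n^{p^\circ - p'} \sup_n \mathbb{E}[\|X^n_s\|_E^{p'}] \longrightarrow 0,
\]
so \(\psi_n(P^{X^n}_s) \to P^X_s\) in \(M^{p^\circ}_w(E)\) as well. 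Testing the mild equation against \(y^* \in D(A^*)\) yields continuous semimartingales with coefficients \(\langle \mu_n(s, X^n_s, P^{X^n}_s), y^*\rangle_E = \langle \mu(s, X^n_s, \psi_n(P^{X^n}_s)), y^*\rangle_E\) (and the analogous quadratic-variation term); the convergences above combined with continuity \ref{A4} identify \(X\) as a martingale solution of \((A, \mu, \sigma, \eta)\). Fatou's lemma then gives \(P^X_t \in M^{p'}_w(E)\), while the continuity \(t \mapsto P^X_t\) in \(M^{p'}_w(E)\) follows by applying the factorization representation directly to the mild equation for \(X\) together with the \(p'\)-moment bound.

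The main obstacle I expect is making the factorization-based moment bound uniform in \(n\): the control on \(\|S_t \sigma\|_{L_2(H,E)}\) goes through \(\f_T\) rather than \(\|S_t\|_{L(E)}\), so one must combine Young's convolution inequality with the condition \(p' > 1/\alpha\) carefully to close the Gronwall loop at the level of the \(p'\)-th power.
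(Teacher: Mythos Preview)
Your approach is correct, but it takes a genuinely different route from the paper.

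The paper does not introduce any new approximation at all: it simply re-runs the proof of \Cref{theo: main} with its explicit time-discretisation \(X^n\), observing that when \(\eta\in M^{p'}_w(E)\) the moment bound of \Cref{lem: moment bound} holds with \(K=E\), so one may take \(f\equiv 1\) in the test processes, and the gap \(p^\circ<p'\) upgrades the weak convergence \(\nu^n_t\to P^X_t\) to convergence in \(M^{p^\circ}_w(E)\). The whole proof is three or four sentences of modifications. Your route instead treats \Cref{theo: main} as a black box, extends the coefficients from \(M^{p^\circ}_w(E)\) to \(M_c(E)\) by the radial truncation \(\psi_n\), and then reproduces the full compactness--martingale-problem machinery a second time for the sequence \((X^n)\) of truncated MKV solutions. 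This is more modular but substantially longer, and the uniform \(p'\)-moment bound you flag as the main obstacle is genuinely an extra piece of work here (it is automatic in the paper's scheme because the discretised coefficients are already controlled by \(\|X^n_s\|_E\) alone).

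One small point: your argument for the continuity \(t\mapsto P^X_t\) in \(M^{p'}_w(E)\) via ``factorization applied to the mild equation for \(X\)'' is unnecessarily indirect. Once you have the uniform bound \(\sup_n E[\sup_{s\le T}\|X^n_s\|_E^{p'}]<\infty\), Fatou applied to \(\sup_{s\le T}\|\cdot\|_E^{p'}\) (not just pointwise) gives \(E[\sup_{s\le T}\|X_s\|_E^{p'}]<\infty\); then path continuity of \(X\) and dominated convergence give \(E[\|X_{t_n}\|_E^{p'}]\to E[\|X_t\|_E^{p'}]\), which together with weak convergence is exactly \(M^{p'}_w\)-continuity. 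This is what the paper does and avoids any circularity.
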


	\Cref{theo: main wasserstein cont} can be proved similar to \Cref{theo: main} and we outline the few necessary changes in \Cref{sec: pf outline wasserstein}.

\begin{remark} \label{rem: weaker conti}
	Replacing \(M_c(E)\) by \(M_w^{p^\circ}(E)\) for the continuity assumptions on the coefficients \(\mu\) and \(\sigma\) is a useful generalization. For instance, consider the coefficient
	\[
	\mu (t, x, \nu) \equiv \int \mu^* (s, x, y) \nu(dy)
	\]
	for a measurable function \(\mu^*\). This coefficient is well-defined for all \(\nu \in M_c(E)\) only if \(y \mapsto \mu^*(t, x,y)\) is bounded. However, if we restrict our attention to \(\nu \in M^{p}_w(E)\) for some \(p \geq 1\), we can allow unbounded \(\mu^*\) under a suitable growth assumption on \(y \mapsto \mu^*(t, x, y)\).
\end{remark}

Next, we also provide a uniqueness result for MKV SPDEs. We fix~\(p \geq 2\) and we assume that \(\mu\) and \(\sigma\) are defined on \(\mathbb{R}_+ \times E \times M^p_w(E)\).
\begin{definition}
	Let \(\eta \in M^p_w(E)\).
	\begin{enumerate}
		\item[\textup{(i)}] We say that the MKV SPDE \((A, \mu, \sigma, \eta)\) satisfies \emph{\(p\)-uniqueness in law} if there is at most one \(p\)-solution measure.
		\item[\textup{(ii)}] We say that the MKV SPDE \((A, \mu, \sigma, \eta)\) satisfies \emph{\(p\)-pathwise uniqueness} if for any two \(p\)-martingale solutions \((\B, W, X)\) and \((\B, W, Y)\) we have a.s. \(X = Y\).
	\end{enumerate}
\end{definition}
Let \(\mathsf{w}_p\) be the \(p\)-Wasserstein metric, i.e. for \(\nu, \eta \in M^p_w (E)\) set
\[
\mathsf{w}_p (\nu, \eta) \triangleq \inf_{F \in \Pi (\nu, \eta)} \Big(\int \|x - y\|^p_E F(dx, dy)\Big)^{1/p},
\]
where \(\Pi (\nu, \eta)\) is the set of Borel probability measures \(F\) on \(E \times E\) such that \(F(dx \times E) = \nu(dx)\) and \(F(E \times dx) = \eta(dx)\). 
\begin{enumerate}
	\item[\mylabel{U1}{\textup{(U1)}}] For every \(T, m > 0\) there exist two Borel functions \(\f = \f_{T, m} \colon (0, T] \to [0, \infty]\) and \(\g = \g_{T, m} \colon (0, T] \to [0, \infty]\) such that 
	\[
	\int_0^T \Big(\Big[\frac{\f (s) }{s^{\alpha}} \Big]^2 + \big[ \g (s) \big]^{p / (p - 1)} \Big) ds < \infty, 
	\] 
	and an increasing continuous function \(\kappa = \kappa_{T, m} \colon \mathbb{R}_+ \to \mathbb{R}_+\) with \(\kappa (0) = 0, k(x) > 0\) for \(x > 0\), and 
	\[
	\int_{0+} \frac{dx}{|\kappa(x^{1/p})|^p} = \infty, 
	\]
	such that 
	\begin{align*}
	 \|S_t(\sigma (s, x, \nu) - \sigma (t, y, \eta))\|_{L_2(H, E)} &\leq \f (t) \big( \|x - y\|_E + \kappa (\mathsf{w}_p(\nu, \eta))\big),
	 \\
	 \|S_t (\mu (s, x, \nu) - \mu (t, y, \eta))\|_E &\leq \g (t) \big( \|x - y\|_E + \kappa (\mathsf{w}_p(\nu, \eta))\big),
	\end{align*}
	and
	\begin{align*}
	\|S_t \sigma (s, x, \nu)\|_{L_2 (H, E)} &\leq \f (t) \big( 1 + \|x\|_E\big),\\
		\|S_t \mu (s, x, \nu)\|_E &\leq \g (t) \big( 1 + \|x\|_E\big),
	\end{align*}
	for all \(0 < t, s \leq T, x, y\in E\) and \(\nu, \eta \in M^p_w(E)\) with \(\|\nu\|_p, \|\eta\|_p \leq m\). 
\end{enumerate}
Irrespective of \(p\), the identity \(\kappa (x) = x\) is a possible choice for \(\kappa\) and hence \ref{U1} can be seen as a generalized Lipschitz condition.

Below we use \ref{U1} together with the condition that \(p > 1/\alpha\), which excludes the case \(p = 2\). The following condition includes the case \(p = 2\).

\begin{enumerate}
	\item[\mylabel{U2}{\textup{(U2)}}] For every \(T, m > 0\) there exists a constant \(C = C_{T, m} > 0\) and an increasing continuous function \(\kappa = \kappa_{T, m} \colon \mathbb{R}_+ \to \mathbb{R}_+\) with \(\kappa (0) = 0, k(x) > 0\) for \(x > 0\), and 
	\[
	\int_{0+} \frac{dx}{|\kappa(x^{1/p})|^p} = \infty, 
	\]
	such that 
	\begin{align*}
	\|\mu (s, x, \nu) - \mu (t, y, \eta)\|_E + \|\sigma (s, x, \nu) &- \sigma (t, y, \eta)\|_{L_2(H, E)} \\&\leq C \big( \|x - y\|_E + \kappa (\mathsf{w}_p(\nu, \eta))\big),
	\end{align*}
	and
	\[
	\|\mu (s, x, \nu)\|_E + \|\sigma (s, x, \nu)\|_{L_2 (H, E)} \leq C \big( 1 + \|x\|_E\big),
	\]
	for all \(0 < t, s \leq T, x, y\in E\) and \(\nu, \eta \in M^p_w(E)\) with \(\|\nu\|_p, \|\eta\|_p \leq m\). 
\end{enumerate}

 Our next main result is the following:
 \begin{theorem} \label{theo: uni}
 	Suppose that either  \(p > 1/\alpha\) and 
 	\ref{U1} hold, or that \(p \geq 2\) and \ref{U2} hold.
 Then, for every \(\eta \in M^p_w(E)\) the MKV SPDE \((A, \mu, \sigma, \eta)\) satisfies \(p\)-uniqueness in law and \(p\)-pathwise uniqueness.
 \end{theorem}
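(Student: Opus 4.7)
The plan is to first establish \(p\)-pathwise uniqueness by a non-linear Gronwall (Bihari--Osgood) argument, and then to deduce \(p\)-uniqueness in law via a Yamada--Watanabe-type reduction to measure-frozen SPDEs. For pathwise uniqueness, let \((\mathbb{B}, W, X)\) and \((\mathbb{B}, W, Y)\) be two \(p\)-martingale solutions on a common driving system with \(X_0 = Y_0\) almost surely, fix a time horizon \(T > 0\), and set \(\varphi(t) \triangleq \mathbb{E}[\|X_t - Y_t\|^p_E]\). I subtract the two mild formulations and estimate the drift and the stochastic convolution contributions separately, controlling each in terms of \(\varphi\) and \(\kappa(\varphi^{1/p})^p\).

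For the drift, Hölder's inequality with conjugate exponent \(q = p/(p-1)\) combined with the integrability \(\int_0^T \g(s)^{p/(p-1)} ds < \infty\) from \ref{U1} (or the uniform bound in \ref{U2}), together with \(\mathsf{w}_p(P^X_s, P^Y_s)^p \leq \varphi(s)\) (since \((X_s, Y_s)\) is an admissible coupling and \(\kappa\) is non-decreasing), yields
\[
\mathbb{E}\Big[\Big\|\int_0^t S_{t-s}[\mu(s, X_s, P^X_s) - \mu(s, Y_s, P^Y_s)] ds\Big\|^p_E\Big] \leq C \int_0^t \bigl(\varphi(s) + \kappa(\varphi(s)^{1/p})^p\bigr) ds.
\]
For the stochastic convolution under \ref{U1}, note that \(p > 1/\alpha > 2\), so Burkholder--Davis--Gundy applies; the factorization identity of Da Prato--Kwapien--Zabczyk,
\[
\int_0^t S_{t-s} \Phi(s) dW_s = \frac{\sin(\pi \alpha)}{\pi} \int_0^t (t-r)^{\alpha-1} S_{t-r} Z_r dr, \quad Z_r \triangleq \int_0^r (r-s)^{-\alpha} S_{r-s} \Phi(s) dW_s,
\]
combined with BDG on \(Z_r\), Jensen's inequality relative to the normalized singular measure \((r-s)^{-2\alpha} \f(r-s)^2 ds\) (which has finite total mass by \ref{U1}), and Hölder's inequality on the outer integral (legitimate because \(p > 1/\alpha\) gives \((\alpha-1)p/(p-1) > -1\)), produces the analogous bound. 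Under \ref{U2}, a direct BDG estimate using the uniform bound \(\|S_t\|_{L(E)} \leq M e^{\omega t}\) on \([0, T]\) together with the Hilbert--Schmidt Lipschitz estimate suffices. Summing both contributions gives
\[
\varphi(t) \leq C \int_0^t \bigl(\varphi(s) + \kappa(\varphi(s)^{1/p})^p\bigr) ds, \quad t \in [0, T],
\]
and the Bihari--Osgood lemma applied to the non-decreasing function \(u \mapsto u + \kappa(u^{1/p})^p\), whose reciprocal is non-integrable at \(0\) (the linear term alone gives \(\int_{0+} du/u = \infty\), and the \(\kappa\) term by hypothesis), forces \(\varphi \equiv 0\); path-continuity of \(X - Y\) then yields \(X = Y\) almost surely.

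For \(p\)-uniqueness in law, given two \(p\)-solution measures \(\pi^1\) and \(\pi^2\) with common initial law \(\eta\), I freeze their time marginals and solve on a common filtered probability space supporting a cylindrical Brownian motion \(W\) the two standard SPDEs
\[
dY^i_t = A Y^i_t dt + \mu(t, Y^i_t, \pi^i_t) dt + \sigma(t, Y^i_t, \pi^i_t) dW_t, \quad Y^i_0 \sim \eta, \quad i = 1, 2,
\]
with common initial value \(Y^1_0 = Y^2_0\). Under \ref{U1} or \ref{U2} the frozen coefficients are Lipschitz in \(x\) with controlled growth, so classical SPDE theory provides pathwise unique strong solutions; since \(\pi^i\) is itself a solution measure of its own frozen equation, uniqueness in law for the frozen problem forces \(Y^i\) to have law \(\pi^i\). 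Replaying the drift and stochastic convolution estimates above, this time using \(\mathsf{w}_p(\pi^1_s, \pi^2_s)^p \leq \mathbb{E}[\|Y^1_s - Y^2_s\|^p_E]\) via the coupling \((Y^1_s, Y^2_s)\), yields the same Bihari--Osgood inequality and hence \(Y^1 = Y^2\) almost surely, so \(\pi^1 = \pi^2\). I expect the main technical hurdle to be the stochastic convolution estimate under \ref{U1}: arranging BDG, Jensen, Fubini, and the outer Hölder inequality so that the singular weights \(s^{-2\alpha} \f(s)^2\) and \((t-r)^{\alpha-1}\) collapse into the clean \(\varphi\)-dependent bound displayed above, with \(p > 1/\alpha\) entering decisively at the outer Hölder step.
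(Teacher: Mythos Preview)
Your proposal is correct in substance and rests on the same core estimate as the paper --- controlling the \(p\)-th moment of the difference of two mild solutions through H\"older on the drift and the factorization/BDG machinery on the stochastic convolution, then invoking a Bihari--Osgood argument. The paper packages this estimate as a separate lemma (comparing two solutions of the SPDE with \emph{frozen} measure inputs \(\gamma^1,\gamma^2\)) and then organizes the proof in the reverse order from yours: it first proves \(p\)-uniqueness in law by a contradiction argument (introducing the first time \(\mathfrak{s}\) at which the one-dimensional marginals differ, showing \(\mathfrak{s}<\infty\) would contradict the frozen-SPDE Yamada--Watanabe uniqueness, then using the key estimate plus Bihari on \([0,\mathfrak{s}+1]\)), and only afterwards deduces pathwise uniqueness by freezing at the now-common marginal flow and quoting pathwise uniqueness for the classical SPDE. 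Your route --- pathwise first via a direct Bihari inequality, then uniqueness in law via coupling on a common driving system --- is arguably more streamlined and avoids the contradiction wrapper; the paper's ordering, on the other hand, makes the reduction to standard (non-McKean--Vlasov) SPDE theory more transparent in the pathwise step.

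One genuine, if minor, gap: your justification that Bihari applies to \(u\mapsto u+\kappa(u^{1/p})^p\) is not sound as written. Non-integrability at \(0\) of \(1/u\) and of \(1/\kappa(u^{1/p})^p\) separately does \emph{not} in general imply non-integrability of their reciprocal sum \(1/(u+\kappa(u^{1/p})^p)\). The clean fix --- and this is what the paper does --- is to first apply the linear Gronwall inequality to absorb the \(\int_0^t \varphi(s)\,ds\) term, obtaining \(\varphi(t)\le C\int_0^t \kappa(\varphi(s)^{1/p})^p\,ds\), and only then invoke Bihari with the single modulus \(u\mapsto \kappa(u^{1/p})^p\), for which the Osgood condition is exactly the hypothesis in \ref{U1}/\ref{U2}. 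You should also make explicit that the constants in \ref{U1}/\ref{U2} depend on a bound \(m\) for \(\|\nu\|_p,\|\eta\|_p\); this is supplied on \([0,T]\) by the continuity of \(t\mapsto P^X_t\) and \(t\mapsto P^Y_t\) in \(M^p_w(E)\), which is part of the definition of a \(p\)-martingale solution.
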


A different uniqueness result was established in \cite{HUANG2021112167}.
We prove \Cref{theo: uni} in \Cref{sec: pf theo uni}. 
The  \Cref{theo: main wasserstein cont,theo: uni} can be combined to an existence and uniqueness statement. However, for the existence part we always require that \(A\) generates a compact semigroup. We now also provide a more classical existence and uniqueness result for equations with Lipschitz coefficients which needs no compactness assumption.
 \begin{enumerate}
 	\item[\mylabel{L1}{\textup{(L1)}}]
 	For every \(T > 0\) there exists two Borel functions \(\f = \f_T \colon (0, T] \to [0, \infty]\) and \(\g = \g_T \colon (0, T] \to [0, \infty]\) such that 
 	\[
 	\int_0^T \Big(\Big[\frac{\f (s) }{s^{\alpha}} \Big]^2  + \big[ \g (s) \big]^{p / (p - 1)} \Big) ds < \infty, 
 	\] 
 	and 
 		\begin{align*}
  \|S_t(\sigma (s, x, \nu) - \sigma (s, y, \eta))\|_{L_2(H, E)} &\leq \f (t) \big( \|x - y\|_E + \mathsf{w}_p(\nu, \eta)\big), \\
  	\|S_t (\mu (s, x, \nu) - \mu (s, y, \eta))\|_E &\leq \g (t) \big( \|x - y\|_E + \mathsf{w}_p(\nu, \eta)\big),
 	\end{align*}
 	and 
 	\begin{align*}
 	 \|S_t \sigma (s, x, \nu)\|_{L_2 (H, E)} &\leq \f (t) \big( 1 + \|x\|_E + \|\nu\|_p\big),\\
 	 \|S_t \mu (s, x, \nu)\|_E &\leq \g (t) \big( 1 + \|x\|_E + \|\nu\|_p\big),\
 	\end{align*}
 	for all \(0 < t, s \leq T,\) \(x, y\in E\) and \(\nu, \eta \in M^p_w(E)\).
 \end{enumerate}
Below we use \ref{L1} together with the condition that \(p > 1/\alpha\), which excludes the case \(p = 2\). The following Lipschitz condition includes the case \(p = 2\).
\begin{enumerate}
	\item[\mylabel{L2}{\textup{(L2)}}]
	For every \(T> 0\) there exists a constant \(C = C_T > 0\) such that
	\begin{align*}
	\|\mu (t, x, \nu) - \mu (t, y, \eta)\|_E + \|\sigma (t, x, \nu) &- \sigma (t, y, \eta)\|_{L_2(H, E)} \\&\leq C \big( \|x - y\|_E + \mathsf{w}_p(\nu, \eta)\big),
	\end{align*}
	 for all \(0 < t \leq T,\) \(x, y\in E\) and \(\nu, \eta \in M^p_w(E)\).
	Moreover, the functions \(\|\mu (\cdot, 0, \delta_0)\|_E\) and \(\|\sigma (\cdot, 0, \delta_0)\|_{L_2(H, E)}\) are bounded on compact subsets of \(\mathbb{R}_+\).
\end{enumerate}
Recall that \(S\) is called a \emph{generalized contraction semigroup} if there exists an \(\omega \in \mathbb{R}\) such that \(\|S_t\|_{L(E)} \leq e^{\omega t}\) for all \(t \in \mathbb{R}_+\). 

\begin{theorem} \label{theo: Lip}
	Suppose that \(\eta \in M^p_w (E)\) and that either \(p > 1/\alpha\) and \ref{L1} hold, or that \(p \geq 2\), that \ref{L2} holds and that \(S\) is a generalized contraction. Then, the MKV SPDE \((A, \mu, \sigma, \eta)\) has a \(p\)-martingale solution and it satisfies \(p\)-uniqueness in law and \(p\)-pathwise uniqueness. Moreover, the MKV SPDE can be realized on any driving system~\((\B, W)\).
\end{theorem}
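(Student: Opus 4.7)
The plan is a classical Picard fixed-point argument adapted to the McKean--Vlasov setting, treating both cases in parallel. Fix a driving system \((\B, W)\) (assumed rich enough to support an \(\mathcal{F}_0\)-measurable \(\xi \sim \eta\) independent of \(W\)) and a finite horizon \(T > 0\). I would work on the complete metric space \(\cH^p_T\) of continuous \((\mathcal{F}_t)\)-adapted \(E\)-valued processes \(X\) with \(X_0 = \xi\) and \(\|X\|_{\cH^p_T}^p \triangleq \sup_{t \le T}\mathbb{E}\|X_t\|^p_E < \infty\), and introduce the Picard map
\[
\Phi(Y)_t \triangleq S_t \xi + \int_0^t S_{t - s}\mu (s, Y_s, P^{Y}_s)\, ds + \int_0^t S_{t - s}\sigma(s, Y_s, P^{Y}_s)\, dW_s,
\]
so that its fixed points are precisely solution processes on \([0,T]\). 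A solution on \(\mathbb{R}_+\) is then obtained by gluing the solutions built on each finite \([0,T]\).

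\textbf{Well-definedness and moment bounds.} First I verify that \(\Phi\) maps \(\cH^p_T\) into itself. For the drift integral, H\"older's inequality in time together with \(\g \in L^{p/(p-1)}(0, T)\) from \ref{L1} (respectively the operator-norm bound \(\|S_t\|_{L(E)} \le e^{\omega T}\) in the \ref{L2} case) yields the required \(L^p\)-bound. For the stochastic convolution I invoke the Da Prato--Kwapie\'n--Zabczyk factorization method: rewrite it as \(c_\alpha\int_0^t (t - s)^{\alpha - 1}S_{t - s} Z_s\, ds\) with \(Z_s \triangleq \int_0^s (s - r)^{-\alpha}S_{s - r}\sigma(r, Y_r, P^Y_r)\, dW_r\). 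Burkholder's inequality in Hilbert space combined with the growth bound on \(\|S_t\sigma\|_{L_2(H, E)}\) and \(\int_0^T [\f(s)/s^\alpha]^2\, ds < \infty\) controls \(\sup_{s \le T}\mathbb{E}\|Z_s\|^p\), and the hypothesis \(p > 1/\alpha\) ensures \(s \mapsto s^{\alpha - 1} \in L^{p/(p-1)}(0, T)\), so a further application of H\"older yields the pointwise \(L^p\)-bound on \((\Phi Y)_t\). In the \ref{L2}-case with \(p \ge 2\) the contraction hypothesis transfers the Hilbert--Schmidt linear growth directly to \(\|S_t\sigma\|_{L_2(H, E)}\), and the analogous bound follows from the same factorization (or a direct BDG/Jensen argument when \(p = 2\)).

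\textbf{Contraction, existence, and uniqueness.} Replacing the linear growth bounds by the corresponding Lipschitz bounds and using \(\mathsf{w}_p(P^X_s, P^Y_s)^p \le \mathbb{E}\|X_s - Y_s\|^p_E\), the same computation delivers a singular Gronwall estimate
\[
\mathbb{E}\|\Phi(X)_t - \Phi(Y)_t\|^p_E \le C \int_0^t K(t, s)\, \mathbb{E}\|X_s - Y_s\|^p_E\, ds,
\]
with an integrable kernel \(K\) on \((0, T)\). Iterating this inequality shows that some iterate \(\Phi^N\) is a strict contraction on \(\cH^p_T\), and Banach's fixed-point theorem produces a unique \(X \in \cH^p_T\) with \(\Phi(X) = X\), i.e.\ a \(p\)-martingale solution on the given driving system. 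The continuity of \(t \mapsto P^X_t\) in \(M^p_w(E)\) follows from the continuity of \(X\), the uniform \(L^p\)-bound and the standard characterization of \(p\)-Wasserstein convergence. Pathwise uniqueness is immediate by applying the same singular Gronwall estimate to two \(p\)-solutions on a common driving system, and \(p\)-uniqueness in law follows because any two \(p\)-solution measures can be realized on a common rich enough driving system by the existence part, and must coincide there.

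\textbf{Main obstacle.} The principal technical hurdle is the bookkeeping of the two distinct singular weights, \([\f(t)/t^\alpha]^2\) from the diffusion and \(\g(t)^{p/(p-1)}\) from the drift, in a single singular Gronwall argument under \ref{L1}: one must verify that iteration of the resulting kernel produces a summable series, which is exactly what the exponent condition \(p > 1/\alpha\) and the integrability of \(\f\) and \(\g\) guarantee. The \ref{L2}-case is conceptually simpler because the generalized contraction bound \(\|S_t\|_{L(E)} \le e^{\omega T}\) allows the diffusion and drift bounds to be handled uniformly and in particular removes the need for any compactness assumption on the semigroup~\(S\).
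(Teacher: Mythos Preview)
Your approach is sound but differs from the paper's in a notable way. You run the Picard fixed point directly on the process space \(\cH^p_T\), so that the McKean--Vlasov nonlinearity and the SPDE estimates are handled simultaneously. The paper instead separates the two layers: it first invokes the uniqueness result \Cref{theo: uni} (so that only existence on a given driving system remains), and for existence it runs the fixed point on the \emph{measure-flow} space \(C([0,T], M^p_w(E))\). For each frozen flow \(\gamma\) it solves the classical SPDE with coefficients \(\mu(\cdot,\cdot,\gamma(\cdot)),\sigma(\cdot,\cdot,\gamma(\cdot))\) via \Cref{theo: DZ} (resp.\ \cite[Theorem~3.3]{gawarecki2010stochastic}), defines \(\Phi(\gamma)(t)\) to be the law of that solution at time \(t\), and shows that an iterate of \(\Phi\) is a contraction. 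This buys modularity: all the factorization/Gronwall work is encapsulated once in the classical result, and the McKean--Vlasov step is a clean estimate on Wasserstein distances. Your route is more self-contained and avoids appealing to \Cref{theo: uni}, at the cost of redoing the SPDE estimates inside the contraction.

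One point in your argument needs more care. Your derivation of \(p\)-uniqueness in law --- ``any two \(p\)-solution measures can be realized on a common rich enough driving system by the existence part, and must coincide there'' --- is not justified as stated: the existence part produces \emph{some} solution on a given \((\B,W)\), not a realization of a \emph{given} solution measure. What you actually need is either a Yamada--Watanabe argument (as the paper uses, via \cite[Theorem~2]{MR2067962}, inside the proof of \Cref{theo: uni}), or the observation that the law of each Picard iterate \(\Phi^n(S_\cdot\xi)\) depends only on \(\eta\) and not on the particular \((\B,W,\xi)\), hence so does the law of the fixed point; then any other \(p\)-solution on any \((\B',W')\) is a fixed point of \(\Phi\) there and must agree a.s.\ with the constructed one by pathwise uniqueness, forcing equal laws. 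Also, for the continuity of \(t\mapsto P^X_t\) in \(M^p_w(E)\) you need \(E[\sup_{s\le T}\|X_s\|^p_E]<\infty\) (to get uniform integrability of \(\|X_t\|^p_E\)), not merely \(\sup_{s\le T}E\|X_s\|^p_E<\infty\); the factorization estimate \eqref{eq: Burkholder fak} delivers this, so you should work with that stronger norm on \(\cH^p_T\).
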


\Cref{theo: Lip} can be seen as a version of \cite[Theorem 4.21]{Carmona18} for an infinite dimensional setting and its proof is similar. 
For completeness we provide it in \Cref{sec: pf theo Lip}.

For negative definite self-adjoint \(A\) satisfying a (generalized) variant of \eqref{eq: fact cond semi}, a related existence and uniqueness result is given by \cite[Theorem 3.1]{Ren21}.
Notice that any semigroup with negative self-adjoint generator is a contraction (\cite[Proposition~6.14]{Schmu12}). 

 \section{Propagation of Chaos for Weakly Interacting SPDEs} \label{sec: chaos main}
  In this section we discuss the chaotic property of weakly interacting particles which are modeled as mild solutions to SPDEs. The section is split into two parts. In the first we derive a result under a continuity condition on the coefficients and a uniqueness assumption on the law of the limiting MKV SPDE. In the second part we provide a result under Lipschitz conditions on the coefficients.
  
 \subsection{The Chaotic Property under Continuity and Uniqueness Assumptions} \label{sec: POC new}
   To fix our setting, we assume that \(\alpha, p', A, \mu\) and \(\sigma\) are as in \Cref{sec: main} with the important exception that \(\mu\) and \(\sigma\) are only defined on \(\mathbb{R}_+ \times E \times M^{p^\circ}_w(E)\) for some \(1 \leq p^\circ < p'\). 
 For \(N \in \mathbb{N}\) we define a map \(L^N \colon E^{\otimes N} \to M_c(E)\) by 
  	\[
 L^N (x_1, \dots, x_N) \triangleq \frac{1}{N} \sum_{i = 1}^N\delta_{x_i}, \quad (x_1, \dots, x_N) \in E^{\otimes N}.
 \]

Let us start with a condition for the initial laws:
\begin{enumerate}
\item[\mylabel{I}{\textup{(I)}}] Let \(\eta^N \in M_c (E^{\otimes N})\) be symmetric such that there exists a measure \(\eta \in M_c(E)\) with
\begin{align}\label{eq: symmetry initial laws assump}
\eta^N (L^N \in d x) \xrightarrow{\quad} \delta_{\eta} (dx) 
\end{align}
weakly\footnote{Notice that the probability measures in \eqref{eq: symmetry initial laws assump} are elements of \(M_c (M_c(E))\) and that the weak topology refers to the topology of convergences in distribution on the space \(M_c(M_c(E))\).} as \(N \to \infty\).
Moreover,
\[
\sup_{N \in \mathbb{N}} \int \|\mathsf{X}_1^N (x)\|^{p'}_E \eta^N(dx) < \infty, 
\]
where \(\X^N_1 \colon E^{\otimes N}\to E\) denotes the projection to the first coordinate.
\end{enumerate}
\begin{remark}
	According to \cite[Proposition I.2.2]{10.1007/BFb0085169}, \eqref{eq: symmetry initial laws assump} holds if and only if the sequence \(\eta^1, \eta^2, \dots\) is \emph{\(\eta\)-chaotic}, i.e. for every \(k \in \mathbb{N}\) and \(\phi_1, \phi_2, \dots, \phi_k \in C_b(E)\)
	\[
	\int_{E^{\otimes N}} \prod_{i = 1}^k \phi_i (\X^N_i (x)) \eta^N(dx) \to \prod_{i = 1}^k \int_E \phi_i (x) \eta (dx), \quad N \to \infty.
	\]
	Moreover, it is also equivalent to the above for \(k = 2\). 
\end{remark}

 The following condition deals with the existence of weakly interacting particles whose chaotic behavior we investigate in the remainder of this section.
\begin{enumerate} 
 	\item[\mylabel{E}{\textup{(E)}}] For \(N \in \mathbb{N}\) there exists a filtered probability space \(\B^N\) with right-continuous and complete filtration which supports independent standard cylindrical Brownian motions \(W^{1} \equiv W^{N, 1}, \dots, W^{N} \equiv W^{N, N}\) and mild solution processes \(X^{N, 1}, \dots, X^{N, N}\) to the~SPDE
 	\[
 	d X^{N, i}_t = A X^{N, i}_t dt + \mu (t, X^{N, i}_t, \mathcal{X}^N_t) dt + \sigma (t, X^{N, i}_t, \mathcal{X}^N_t) d W^{i}_t, 
 	\]
 	with \((X^{N, 1}_0, \dots, X^{N, N}_0) \sim \eta^N\) and 
 	\[
 	\mathcal{X}^N_t \triangleq \frac{1}{N} \sum_{i = 1}^N \delta_{X^{N, i}_t} = L^N (X^{N, 1}_t, \dots, X^{N, N}_t), \quad t \in \mathbb{R}_+.
 	\]
 	Moreover, the SPDE
 	\[
 	d Y_t = \Big[\bigoplus_{i = 1}^N A\Big] Y_t dt + \Big[\bigoplus_{i = 1}^N \mu (t, Y_t, L^N(Y_t))\Big] dt + \Big[\bigoplus_{i = 1}^N \sigma (t, Y_t, L^N (Y_t))\Big] d W_t, 
 	\]
 	satisfies uniqueness in law.
 \end{enumerate}

Next, we formulate a uniqueness condition for the limiting MKV SPDE.
 \begin{enumerate}
 	\item[\mylabel{UL}{\textup{(UL)}}] The MKV SPDE with coefficients \((A, \mu, \sigma, \eta)\) satisfies \(p'\)-uniqueness in law. 
\end{enumerate}

Finally, we also formulate a version of \ref{A4} from \Cref{sec: main}.
\begin{enumerate}
	\item[\mylabel{C}{\textup{(C)}}] 
	For all \(y^* \in D(A^*)\) and \(t > 0\) the maps \(\langle \mu (t, \cdot, \cdot), y^*\rangle_E\) and \(\| \sigma^*(t, \cdot, \cdot) y^*\|_H\) are continuous on \(E \times M_w^{p^\circ} (E)\), and the maps  \(\langle \mu, y^*\rangle_E\) and \(\| \sigma^*y^*\|_H\) are bounded on compact subsets of~\(\mathbb{R}_+ \times E \times M^{p^\circ}_w (E)\).
 \end{enumerate}
\begin{remark}
The final local boundedness condition from \ref{C} is not implied by the linear growth condition \ref{A5}, because \(M^{p'}_w (E) \subset M^{p^\circ}_w(E)\) since \(p^\circ < p'\).
\end{remark}
 
 For \(T > 0\) let \(\mathsf{w}^{p^\circ}_T\) be the \(p^\circ\)-Wasserstein metric on \(M^{p^\circ}_w(C([0, T], E))\) where \(C([0, T], E)\) is endowed with the uniform metric.
 The following theorem is the main result in this section. It formalizes the chaotic behavior of the weakly interacting SPDEs from \ref{E}.
 
 \begin{theorem} \label{theo: POC new1}
 	Suppose that \ref{A1}, \ref{A5}, \ref{I}, \ref{E}, \ref{UL} and \ref{C}  hold.
 	Then, the MKV SPDE with coefficients \((A, \mu, \sigma, \eta)\) has a \(p'\)-martingale solution with unique law \(\mathcal{X}^0\).
 	Moreover, for all \(T > 0\)
 	\begin{align}\label{eq: new WS metric conv}
 	\lim_{N\to \infty} E \Big[ \big|\mathsf{w}^{p^\circ}_T (\mathcal{X}^N, \mathcal{X}^0)\big|^{p^\circ} \Big] = 0,
 	\end{align}
 	and the particles \(X^{N, i}\) are \(\mathcal{X}^0\)-chaotic, i.e. for every \(k \in \mathbb{N}\) we have 
	\begin{align} \label{eq: second conv}
(X^{N, 1}, \dots, X^{N, k}) \to (Y^1, \dots, Y^k)
\end{align}
 	weakly as \(N \to \infty\), where \(Y^1, \dots, Y^k\) are i.i.d. with \(Y^1 \sim \mathcal{X}^0\).
 \end{theorem}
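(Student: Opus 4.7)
\textbf{Proof plan for \Cref{theo: POC new1}.} The plan is to adapt the classical Sznitman-style scheme from the finite dimensional setting to the mild solution framework, using the factorization method to handle the SPDE structure and the compactness of the semigroup from \ref{A1} to obtain tightness. Throughout I work on a fixed finite horizon $[0, T]$.

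First I would establish uniform-in-$N$ moment estimates. By exchangeability of $X^{N, 1}, \dots, X^{N, N}$ (which follows from symmetry of $\eta^N$ in \ref{I} and the symmetry of the particle system in \ref{E}) and the initial moment bound in \ref{I}, an application of the Da Prato--Kwapien--Zabczyk factorization method, together with the linear growth estimates in \ref{A5} and the observation $\|\mathcal{X}^N_t\|_{p'}^{p'} = \frac{1}{N} \sum_{i=1}^N \|X^{N, i}_t\|_E^{p'}$, yields
\[
\sup_{N \in \mathbb{N}} E \Big[ \sup_{t \in [0, T]} \|X^{N, 1}_t\|_E^{p'} \Big] < \infty.
\]
This bound plays a dual role: it controls the $p'$-Wasserstein norm of $\mathcal{X}^N_t$ and, since $p^\circ < p'$, it furnishes the uniform integrability needed to upgrade later convergences.

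Next I would prove tightness of the laws of $X^{N, 1}$ in $C([0, T], E)$ using the compactness method from~\cite{doi:10.1080/17442509408833868}, which relies critically on the compactness of $S$ assumed in \ref{A1}. By a result of Sznitman (see, e.g. \cite[Proposition I.2.2]{10.1007/BFb0085169}), exchangeability together with tightness of the one-particle marginals implies tightness of the laws of $\mathcal{X}^N$ as $M_c(C([0, T], E))$-valued random variables. Combined with the $p'$-moment bound above and the choice $p^\circ < p'$, a standard uniform integrability argument upgrades this to tightness of $\{\text{Law}(\mathcal{X}^N)\}$ in the Polish space of probability measures on $M^{p^\circ}_w (C([0, T], E))$.

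The key step, and the main obstacle, is identifying the limit. Passing to a subsequence and using Skorokhod representation, I may assume $\mathcal{X}^N \to \mathcal{X}^\infty$ almost surely in $M^{p^\circ}_w (C([0, T], E))$. For each $y^* \in D(A^*)$ I would consider the real-valued test functional
\[
F (\nu) \triangleq \int \Phi\bigl(\omega|_{[0, s]}\bigr) \Bigl[ \langle \omega_t - \omega_s, y^* \rangle_E - \int_s^t \bigl( \langle \omega_r, A^* y^* \rangle_E + \langle \mu(r, \omega_r, \nu_r), y^* \rangle_E \bigr) dr \Bigr] \nu (d \omega),
\]
for $0 \leq s \leq t \leq T$ and bounded continuous $\Phi$, and an analogous functional capturing the quadratic variation involving $\|\sigma^*(r, \omega_r, \nu_r) y^*\|_H^2$. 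Using \ref{E}, each individual particle $X^{N, i}$ solves the mild equation, hence by It\^o's formula satisfies the weak martingale identity, and averaging over $i = 1, \dots, N$ together with exchangeability gives $E [F (\mathcal{X}^N)] = 0$ up to an $O(1/N)$ error from exchanging expectation and averaging. The continuity assumption \ref{C} ensures that $\nu \mapsto F(\nu)$ is continuous on compacts of $M^{p^\circ}_w (C([0, T], E))$, while the $p'$-moment bound provides the uniform integrability required to pass to the limit; this yields $E[F(\mathcal{X}^\infty)] = 0$, together with the analogous second-moment identity. Disintegrating and invoking the usual martingale representation then shows that $\mathcal{X}^\infty$-almost surely the canonical coordinate process is a $p'$-solution measure of the MKV SPDE with coefficients $(A, \mu, \sigma, \eta)$, where the initial law $\eta$ is recovered from \ref{I}.

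Finally, \ref{UL} forces $\mathcal{X}^\infty$ to equal a Dirac mass at the unique $p'$-solution measure $\mathcal{X}^0$, and a standard subsequence argument promotes this to convergence of the full sequence. The moment bound and $p^\circ < p'$ yield uniform integrability of $|\mathsf{w}^{p^\circ}_T (\mathcal{X}^N, \mathcal{X}^0)|^{p^\circ}$, upgrading convergence in probability to the convergence in mean \eqref{eq: new WS metric conv}. Convergence \eqref{eq: second conv} is then immediate from the equivalence between convergence of exchangeable empirical measures to a deterministic limit and chaos.
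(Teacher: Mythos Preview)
Your overall architecture matches the paper's proof closely: uniform $p'$-moment bounds via factorization and Gronwall, tightness of the one-particle marginal by the Gatarek--Go{\l}dys compactness method under \ref{A1}, lifting to tightness of $\{\mathcal{X}^N\}$ in $M^{p^\circ}_w(C([0,T],E))$ via Sznitman plus moment-based uniform integrability, a martingale-problem identification of the limit, and finally \ref{UL} plus Vitali to conclude. So the skeleton is correct.

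There is, however, a genuine gap in your identification step. From the martingale property of each particle you correctly get $E[F(\mathcal{X}^N)]=0$ (exactly, not ``up to $O(1/N)$''), and passing to the limit gives $E[F(\mathcal{X}^\infty)]=0$. But this is \emph{not} sufficient: to conclude that almost every realization of the random measure $\mathcal{X}^\infty$ is a solution measure of the MKV SPDE you need $F(\mathcal{X}^\infty)=0$ \emph{almost surely}, for a countable family of test data. The missing ingredient is a variance computation. The paper shows
\[
E\bigl[\bigl(F(\mathcal{X}^N)\bigr)^2\bigr]=\frac{1}{N^2}\sum_{i,j=1}^N E\bigl[V(X^{N,i},\mathcal{X}^N)V(X^{N,j},\mathcal{X}^N)\bigr]=O(1/N),
\]
where the crucial point is that the off-diagonal terms $i\neq j$ vanish because the stochastic integrals driven by $W^i$ and $W^j$ are orthogonal martingales (independence of the noises). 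Combining this with a separate argument that $E[|F(\mathcal{X}^N)|]\to E[|F(\mathcal{X}^\infty)|]$ (which itself needs a truncation since $F$ is not bounded) yields $E[|F(\mathcal{X}^\infty)|]\leq\lim_N E[(F(\mathcal{X}^N))^2]^{1/2}=0$. Your phrase ``second-moment identity'' refers to the quadratic-variation test functional for the martingale problem, not to this variance decay; and your ``$O(1/N)$ error'' is attached to the wrong quantity. Without the orthogonality argument the limit identification does not go through.

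A minor technical remark: the paper uses test functions $g\in C_c^2(\mathbb{R})$ applied to $\langle\omega,y^*\rangle_E$ rather than your unbounded linear functional, precisely to control integrability when passing to the limit; you would likely need the same device.
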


The proof of \Cref{theo: POC new1} is given in \Cref{sec: pf POC new 1}. 
\Cref{eq: second conv} means that the particles \(X^{N, i}\) become asymptotically i.i.d. as \(N \to \infty\). 
 In the proof of \Cref{theo: POC new1} we establish the existence part without evoking results from \Cref{sec: main}. 
\Cref{theo: uni} provides some conditions for~\ref{UL}. More conditions for \ref{UL} can be found in \cite{HUANG2021112167}.

 \subsection{The Chaotic Property under Lipschitz Conditions} \label{sec:POC}
 
 In this section we discuss the chaotic behavior for weakly interacting SPDEs with Lipschitz coefficients. 
 
 Let \(A, \mu\) and \(\sigma\) be as in \Cref{sec: main} but \(\mu\) and \(\sigma\) need only be defined on \(\mathbb{R}_+ \times E \times M^p_w(E)\).
 Take a filtered probability space \(\B = (\Omega, \cF, (\cF_t)_{t \geq 0}, P)\) which supports a sequence \(W^1, W^2, \dots\) of independent standard cylindrical Brownian motions and a sequence \(\xi^1_0, \xi^2_0, \dots\) of \(\cF_0\)-measurable i.i.d. random variables with \(\xi_0^1 \sim \eta \in M^p_w (E)\). 
The following proposition shows that \ref{E} is implied by the global Lipschitz condition \ref{L1}.
\begin{proposition} \label{prop: weakly interact existence}
	Assume that \ref{L1} holds. 
	For \(N \in \mathbb{N}\) and \(i = 1, \dots, N\), on \(\B\) there exists a unique (up to indistinguishability) mild solution process \(X^{i, N}\) to the SPDE
 \[
 d X^{N, i}_t = A X^{N, i}_t dt + \mu (t, X^{N, i}_t, \mathcal{X}^N_t) dt + \sigma (t, X^{N, i}_t, \mathcal{X}^N_t) d W^{i}_t, \quad X^{N, i} = \xi^i_0, \]
 with 
 \[
 \mathcal{X}^N_t \triangleq \frac{1}{N} \sum_{i = 1}^N \delta_{X^{N, i}_t}, \quad t \in \mathbb{R}_+.
 \]
\end{proposition}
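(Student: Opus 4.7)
The plan is to recast the coupled $N$-particle system as a single mild SPDE on the product Hilbert space $\mathbf{E} \triangleq E^{\otimes N}$, with generator $\mathbf{A} \triangleq \bigoplus_{i=1}^N A$, semigroup $\mathbf{S}_t \triangleq \bigoplus_{i=1}^N S_t$, and driving noise $\mathbf{W} \triangleq (W^1, \dots, W^N)$ (a standard cylindrical Brownian motion on $\mathbf{H} \triangleq H^{\otimes N}$), with combined coefficients
\[
\bar{\mu}(t, y) \triangleq \bigoplus_{i=1}^N \mu(t, y_i, L^N(y)), \qquad \bar{\sigma}(t, y) \triangleq \bigoplus_{i=1}^N \sigma(t, y_i, L^N(y)),
\]
for $y = (y_1, \dots, y_N) \in \mathbf{E}$. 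Once existence and pathwise uniqueness for this auxiliary SPDE with initial value $(\xi_0^1, \dots, \xi_0^N)$ are established, its coordinate processes are precisely the desired $X^{N, i}$.

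The key step is to transfer \ref{L1} from the single-particle level to the product system. First observe that $L^N \colon \mathbf{E} \to M^p_w(E)$ is Lipschitz: the diagonal coupling $F \triangleq \frac{1}{N}\sum_{i=1}^N \delta_{(y_i, z_i)} \in \Pi(L^N(y), L^N(z))$ yields
\[
\mathsf{w}_p(L^N(y), L^N(z))^p \leq \frac{1}{N}\sum_{i=1}^N \|y_i - z_i\|_E^p \leq \|y - z\|_{\mathbf{E}}^p,
\]
using $\max_i \|y_i - z_i\|_E \leq \|y - z\|_{\mathbf{E}}$, valid here since $p > 1/\alpha > 2$. An analogous argument gives $\|L^N(y)\|_p \leq \|y\|_{\mathbf{E}}$. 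Combining these with the mild Lipschitz and linear growth bounds in \ref{L1} and the tensorisation identity
\[
\|\mathbf{S}_t \bar{\sigma}(s, y)\|_{L_2(\mathbf{H}, \mathbf{E})}^2 = \sum_{i=1}^N \|S_t \sigma(s, y_i, L^N(y))\|_{L_2(H, E)}^2
\]
(and its drift analogue), a routine bookkeeping computation shows that $(\bar{\mu}, \bar{\sigma})$ satisfies the single-coefficient analogue of \ref{L1} on $\mathbf{E}$ with semigroup $\mathbf{S}$, for certain $N$-dependent functions $\bar{\f}, \bar{\g}$ that inherit the required integrability.

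With the reformulation in place, existence and pathwise uniqueness for the auxiliary SPDE follow from \Cref{theo: Lip} applied with trivial measure dependence (the requirement $p > 1/\alpha$ is inherited from the setting of Section~\ref{sec: main}). Unwinding the reformulation yields the coupled processes $X^{N, 1}, \dots, X^{N, N}$ with the claimed uniqueness up to indistinguishability. The only slightly delicate point is the compatibility of the semigroup-weighted Lipschitz bound of \ref{L1} with the product structure; but since $\mathbf{S}_t = \bigoplus S_t$, the Hilbert--Schmidt estimates tensor cleanly and the factors $\f(t), \g(t)$ survive intact, so this really does reduce to bookkeeping.
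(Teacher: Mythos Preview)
Your approach is essentially the paper's own: recast the $N$-particle system as a single SPDE on the product space, verify that the aggregated coefficients inherit Lipschitz and linear growth bounds via the diagonal coupling estimate for $\mathsf{w}_p(L^N(y),L^N(z))$, and then invoke an existence/uniqueness theorem. The only cosmetic difference is that the paper appeals directly to the classical (measure-independent) result \Cref{theo: DZ}, whereas you route through \Cref{theo: Lip} with trivial measure dependence; since the proof of \Cref{theo: Lip} itself rests on \Cref{theo: DZ}, this is a detour rather than a different argument, and it forces you to add the side condition $p>1/\alpha$ which \Cref{theo: DZ} does not need for bare existence and uniqueness.
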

\Cref{prop: weakly interact existence} follows from \Cref{theo: DZ}. For completeness we give a proof in \Cref{sec: pf prop ex}.
The following theorem is a version of \Cref{theo: POC new1} for the present setting. Compared to \Cref{theo: POC new1} its scope is slightly different as the semigroup \(S\) needs not to be compact but the coefficients have to be Lipschitz. 

For \(T > 0\) recall that \(\mathsf{w}^p_T\) is the \(p\)-Wasserstein metric on \(M^p_w(C([0, T], E))\) where \(C([0, T], E)\) is endowed with the uniform metric.
In case \(p > 1/\alpha\) and \ref{L1} hold, or \(p \geq 2\), \ref{L2} holds and that \(S\) is a generalized contraction, \Cref{theo: Lip} implies the existence of a \(p\)-martingale solution to the MKV SPDE \((A, \mu, \sigma, \eta)\) with a unique law \(\mathcal{X}^0\).
\begin{theorem} \label{theo: POC}
		Assume that either \(p > 1/\alpha\) and that \ref{L1} hold, or that \(p \geq 2\), \ref{L2} holds and that \(S\) is a generalized contraction. 
	For every \(T > 0\) it holds that 
	\begin{align} \label{eq: first conv}
	\lim_{N \to \infty} E \Big[ \big|\mathsf{w}_T^p (\mathcal{X}^N, \mathcal{X}^0)\big|^p \Big] = 0.
	\end{align}
	Moreover, the particles \(X^{N, i}\) are \(\mathcal{X}^0\)-chaotic, see \Cref{theo: POC new1}.
\end{theorem}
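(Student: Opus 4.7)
The plan is a synchronous coupling in the spirit of Sznitman, adapted to the mild-solution SPDE setting. On the filtered space $\B$ of \Cref{prop: weakly interact existence}, which supports i.i.d.\ Brownian motions $W^1, W^2,\ldots$ and i.i.d.\ initial data $\xi_0^1, \xi_0^2, \ldots$ with $\xi_0^1 \sim \eta$, I would apply \Cref{theo: Lip} to produce, for each $i \in \mathbb{N}$, the pathwise unique mild solution $Y^i$ of the MKV SPDE $(A, \mu, \sigma, \eta)$ on the driving system $(\B, W^i)$ with $Y^i_0 = \xi_0^i$. Uniqueness in law of the MKV SPDE, together with the fact that $(W^i, \xi_0^i)_i$ are i.i.d., forces $Y^1, Y^2, \ldots$ to be i.i.d.\ with common law $\mathcal{X}^0$. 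Write $\overline{\mathcal{X}}^N \triangleq \frac{1}{N}\sum_{i=1}^N \delta_{X^{N,i}}$ and $\overline{\mathcal{Y}}^N \triangleq \frac{1}{N}\sum_{i=1}^N \delta_{Y^i}$ for the associated empirical measures on $C([0,T], E)$.

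By the triangle inequality $\mathsf{w}_T^p(\overline{\mathcal{X}}^N, \mathcal{X}^0) \leq \mathsf{w}_T^p(\overline{\mathcal{X}}^N, \overline{\mathcal{Y}}^N) + \mathsf{w}_T^p(\overline{\mathcal{Y}}^N, \mathcal{X}^0)$ and the index coupling
\[
\mathsf{w}_T^p(\overline{\mathcal{X}}^N, \overline{\mathcal{Y}}^N)^p \leq \frac{1}{N}\sum_{i=1}^N \sup_{t \leq T}\|X^{N,i}_t - Y^i_t\|_E^p,
\]
exchangeability of $((X^{N,i}, Y^i))_{i=1}^N$ reduces the expectation of the coupling term to $u_N(T)$, where $u_N(t) \triangleq E\bigl[\sup_{s\leq t}\|X^{N,1}_s - Y^1_s\|_E^p\bigr]$. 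Subtracting the two mild formulae for $X^{N,1}$ and $Y^1$, applying the Lipschitz bound \ref{L1} (resp.\ \ref{L2}) and the factorization method of \cite{doi:10.1080/17442508708833480} to the stochastic convolution as in the proof of \Cref{theo: Lip}, and using the split $\mathsf{w}_p(\mathcal{X}^N_s, \mathcal{X}^0_s) \leq \mathsf{w}_p(\mathcal{X}^N_s, \mathcal{Y}^N_s) + \mathsf{w}_p(\mathcal{Y}^N_s, \mathcal{X}^0_s)$, whose first summand has $p$-th moment at most $u_N(s)$, I would arrive at an integral inequality of the form
\[
u_N(t) \leq C_T \int_0^t \bigl[ u_N(s) + E|\mathsf{w}_p(\mathcal{Y}^N_s, \mathcal{X}^0_s)|^p \bigr]\, ds,
\]
which a (generalized) Gronwall argument closes into $u_N(T) \leq C'_T \int_0^T E|\mathsf{w}_p(\mathcal{Y}^N_s, \mathcal{X}^0_s)|^p ds$.

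It then suffices to show that both $E|\mathsf{w}_T^p(\overline{\mathcal{Y}}^N, \mathcal{X}^0)|^p$ and the integrand $E|\mathsf{w}_p(\mathcal{Y}^N_s, \mathcal{X}^0_s)|^p$ tend to zero. Since $Y^1, \ldots, Y^N$ are i.i.d.\ samples from $\mathcal{X}^0$ on the Polish space $C([0,T], E)$, Varadarajan's theorem yields weak convergence $\overline{\mathcal{Y}}^N \to \mathcal{X}^0$ almost surely. The linear growth parts of \ref{L1}/\ref{L2} combined with the factorization method applied to $Y^1$ alone produce a moment bound $E\bigl[\sup_{t \leq T}\|Y^1_t\|_E^{p}\bigr] < \infty$, and the strong law of large numbers then supplies the uniform integrability needed to upgrade weak to $p$-Wasserstein convergence almost surely; a crude $L^p$ bound on $\mathsf{w}_T^p(\overline{\mathcal{Y}}^N, \mathcal{X}^0)$ provided by the same moment estimate then gives $L^1$ convergence by dominated convergence. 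The same argument at a fixed time $s$, combined with dominated convergence in $s$, controls the integral term. Combining yields \eqref{eq: first conv}, and the chaotic property \eqref{eq: second conv} follows from the standard equivalence between convergence of the empirical measure to a deterministic limit and $\mathcal{X}^0$-chaos \cite[Proposition I.2.2]{10.1007/BFb0085169}.

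The main obstacle I expect lies in the coupling estimate under \ref{L1}: unlike \ref{L2}, the Hilbert--Schmidt norm of $\sigma$ is controlled only after composition with the semigroup and only through the singular weight $\f$, so the naive BDG plus Gronwall route is unavailable. One must instead estimate the $L^p$ norm of the stochastic convolution via the factorization method, which transfers the singularity to the kernel $\f(t-s)^2/(t-s)^{2\alpha}$ and forces the use of the condition $p > 1/\alpha$ together with a generalized Gronwall lemma adapted to integrable-but-singular kernels.
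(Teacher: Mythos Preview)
Your synchronous-coupling argument is exactly the one the paper uses (crediting \cite{Lak18}); the paper organizes the Gronwall step slightly differently---first closing on $E[\sup_{s\le t}\|X^{N,i}_s-Y^i_s\|_E^p]$ with forcing $\int_0^t E|\mathsf{w}_p(\mathcal{X}^N_s,\mathcal{X}^0_s)|^p\,ds$, and then a second Gronwall on $E[|\mathsf{w}^p_t(\mathcal{X}^N,\mathcal{X}^0)|^p]$ itself---but your single-pass variant is equivalent. Two small corrections: for $E[|\mathsf{w}_T^p(\overline{\mathcal{Y}}^N,\mathcal{X}^0)|^p]\to 0$ the paper simply invokes \cite[Corollary~2.14]{Lak18}, and in your hand-rolled version ``dominated convergence'' should be Vitali (the SLLN gives uniform integrability of $\tfrac{1}{N}\sum_i d_T(Y^i,0)^p$, not a pointwise dominant); and the singular-kernel obstacle you flag does not actually arise, because \Cref{lem: factorization} already absorbs the weight $\f(\cdot)/(\cdot)^\alpha$ into a single constant and returns the clean estimate $E\bigl[\sup_{s\le t}\|\int_0^s S_{s-r}\phi_r\,dW_r\|_E^{p}\bigr]\le C\int_0^t E|\psi_s|^{p}\,ds$, so only the ordinary Gronwall lemma is needed.
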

\Cref{theo: POC} can be proved as its finite dimensional counterpart \cite[Theorem 3.3]{Lak18}. For completeness we give a proof in \Cref{sec: pf POC lip}. 
Except of our assumption that we use i.i.d. initial data, \Cref{theo: POC} generalizes \cite[Theorem 5.3]{BKKX98} to more general particle systems. In particular, the convergence in \Cref{theo: POC} is stronger and its proof appears to be more straightforward.

\section{Proof of \Cref{theo: main}} \label{sec: pf theo 1}
The proof is split into several steps. In Step 0 we prepare some estimates and we recall the factorization formula from \cite{doi:10.1080/17442508708833480}. Then, in Step 1
we define an approximation sequence, in Step 2 we establish some moment estimates, in Step 3 we verify tightness of the approximation sequence and in Step 4 we investigate a martingale problem. In the fifth and final step we use a representation theorem for cylindrical continuous local martingales to complete the proof.

~

\noindent
\emph{Step 0: A short Recap of the Factorization Formula.} 
Fix a finite time horizon \(T > 0\).
For \(p' > 1,\) \(1/p' < \lambda \leq 1\) and \(h \in L^{p'}([0, T], E)\) we set 
\[
R_\lambda h (t) \triangleq \int_0^t (t - s)^{\lambda - 1} S_{t-s} h (s) ds, \quad t \leq T.
\]
Note that \(R_\lambda\) is indeed well-defined, as
\begin{equation} \begin{split} \label{eq: fak bound 1}
\int_0^t (t - s)^{\lambda - 1} &\|S_{t-s} h (s)\|_E ds 
\\&\leq \Big(\int_0^T s^{p' (\lambda - 1)/(p' - 1)} \|S_{s}\|^{p'/(p' - 1)}_{L(E)} ds\Big)^{(p' - 1)/p'} \Big( \int_0^T \|h(s)\|^{p'}_E ds \Big)^{1/p'},
\end{split}
\end{equation}
by H\"older's inequality. The first integral is finite as \(p'(\lambda - 1)/(p' - 1) > - 1\Longleftrightarrow\lambda > 1/p'\). 
The inequality \eqref{eq: fak bound 1} shows that 
\[
\|R_\lambda h (t)\|_E \leq \Big(\int_0^T s^{p' (\lambda - 1)/(p' - 1)} \|S_{s}\|^{p'/(p' - 1)}_{L(E)} ds\Big)^{(p' - 1)/p'} \Big( \int_0^T \|h(s)\|^{p'}_E ds \Big)^{1/p'},
\]
which means that \(R_\lambda\) is a bounded linear operator on \(L^{p'}([0, T], E)\).
\begin{lemma}[\textup{\cite[Proposition 1]{doi:10.1080/17442509408833868}}] \label{lem: R compact}
	For any \(1/p' < \lambda \leq 1\) the operator \(R_\lambda\) maps \(L^{p'}([0, T], E)\) into \(C([0, T], E)\). Moreover, if the semigroup \(S\) is compact, then \(R_\lambda\) is compact.
\end{lemma}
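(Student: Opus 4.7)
The plan is to establish the two claims separately. Set $q \triangleq p'/(p'-1)$; the constraint $\lambda > 1/p'$ is equivalent to $(\lambda-1) q > -1$, which is precisely what makes the weighted integrals appearing in the factorization bound~\eqref{eq: fak bound 1} finite. For the continuity claim, I would first verify it on the dense subspace $C([0, T], E) \subset L^{p'}([0, T], E)$: for continuous $h$, the substitution $r = t-s$ gives $R_\lambda h(t) = \int_0^t r^{\lambda-1} S_r h(t-r) dr$, and dominated convergence combined with strong continuity of $r \mapsto S_r$ and uniform continuity of $h$ on $[0, T]$ yields continuity of $t \mapsto R_\lambda h(t)$. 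For general $h \in L^{p'}$, I would choose continuous approximants $h_n \to h$ in $L^{p'}$; the factorization bound~\eqref{eq: fak bound 1} then gives $\|R_\lambda h_n - R_\lambda h\|_\infty \to 0$, and a uniform limit of continuous functions is continuous.

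For the compactness claim, under the hypothesis that $S_t$ is compact for every $t > 0$, I would apply the Arzel\`a--Ascoli theorem to $R_\lambda(B)$ for an arbitrary bounded set $B \subset L^{p'}([0, T], E)$, verifying pointwise relative compactness and equicontinuity. Pointwise relative compactness at $t \in (0, T]$ follows from the splitting
\begin{align*}
R_\lambda h(t) &= S_\epsilon \int_0^{(t-\epsilon)_+} (t-u)^{\lambda-1} S_{t-u-\epsilon} h(u) du \\
&\quad + \int_{(t-\epsilon)_+}^t (t-u)^{\lambda-1} S_{t-u} h(u) du
\end{align*}
for small $\epsilon \in (0, t)$: the first term lies in $S_\epsilon(K)$ for some set $K \subset E$ bounded uniformly over $h \in B$ (by another application of~\eqref{eq: fak bound 1}), hence relatively compact since $S_\epsilon$ is a compact operator; the second term has norm bounded by $C(\epsilon) \sup_{h \in B} \|h\|_{L^{p'}}$ with $C(\epsilon) \to 0$ as $\epsilon \to 0$, by H\"older's inequality and dominated convergence.

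The hard part will be equicontinuity, since the naive difference $R_\lambda h(t_2) - R_\lambda h(t_1)$ does not directly expose a compact operator to factor out, and uniform strong continuity of $S$ fails on merely bounded sets in $E$. The key trick is to use the semigroup identity $S_{t_2-s} = S_\delta S_{t_1-s}$ (with $\delta = t_2-t_1 > 0$) to decompose
\begin{align*}
R_\lambda h(t_2) - R_\lambda h(t_1) &= (S_\delta - I)\, R_\lambda h(t_1) \\
&\quad + S_\delta \int_0^{t_1} \bigl[(t_2-s)^{\lambda-1} - (t_1-s)^{\lambda-1}\bigr] S_{t_1-s} h(s) ds \\
&\quad + \int_{t_1}^{t_2} (t_2-s)^{\lambda-1} S_{t_2-s} h(s) ds.
\end{align*}
The first summand is small uniformly in $h \in B$ because the pointwise step already placed $R_\lambda h(t_1)$ in a relatively compact subset of $E$, on which strong continuity of $S$ is automatically uniform. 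The third summand is handled as in the pointwise step by H\"older. The middle summand is controlled by H\"older together with $\int_0^{t_1} \bigl|(t_2-s)^{\lambda-1} - (t_1-s)^{\lambda-1}\bigr|^q ds \to 0$ as $\delta \to 0$, which follows from dominated convergence using $(\lambda-1) q > -1$. Once equicontinuity is in place, Arzel\`a--Ascoli concludes.
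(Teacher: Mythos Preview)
The paper does not supply a proof of this lemma; it is quoted verbatim from \cite{doi:10.1080/17442509408833868} and used as a black box. Your argument is the standard one underlying that reference and is essentially correct.

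One small point worth tightening in the equicontinuity step: your decomposition, as written, fixes \(t_1\) and lets \(t_2 > t_1\), so the first summand \((S_\delta - I) R_\lambda h(t_1)\) involves the relatively compact set \(\{R_\lambda h(t_1) : h \in B\}\) at the \emph{fixed} point \(t_1\). For equicontinuity from the left at \(t_0\) you would swap the roles and get \((S_\delta - I) R_\lambda h(t)\) with \(t\) varying in \((t_0 - \delta, t_0)\), so the relevant set is no longer fixed. The clean fix is to observe that your pointwise-compactness estimates are in fact uniform in \(t \in [0, T]\): the tail bound \((\int_0^\epsilon v^{q(\lambda-1)}\|S_v\|^q\,dv)^{1/q}\) is independent of \(t\), and the inner integral in the \(S_\epsilon\)-piece is bounded by \((\int_0^T v^{q(\lambda-1)}\|S_v\|^q\,dv)^{1/q}\|h\|_{L^{p'}}\) uniformly in \(t\) and \(\epsilon\). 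Hence the whole set \(\{R_\lambda h(t) : h \in B,\ t \in [0,T]\}\) is relatively compact in \(E\), and \((S_\delta - I)\) tends to zero uniformly on it, giving uniform equicontinuity directly.
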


Next, take some \(0 < \alpha < 1/2\) and \(p' > 2\) large enough such that \(1/p' < \alpha\). Moreover, let \(\f \colon (0, T] \to [0, \infty]\) be a Borel function such that 
\[
\int_0^T \Big[\frac{\f (s) }{s^{\alpha}} \Big]^2 ds < \infty,
\]
and let \(\phi\) be a predictable \(L (H,E)\)-valued process and \(\psi\) a predictable real-valued process such that
\[
\|S_t \phi_s \|_{L_2 (H, E)} \leq \f (t) | \psi_s |
\]
for all \(0 < t, s \leq T\), and 
\[
E \Big[ \int_0^T |\psi_s|^{p'} ds \Big] < \infty.
\]
Set \(\gamma \triangleq p'/(p' - 1)\). Then, using H\"older's inequality in the second and last line, and Young's inequality in the third line, we obtain
\begin{align*}
&\hspace{-0.3cm}\int_0^T (T - t)^{\alpha - 1} \Big( E \Big[ \int_0^t (t - s)^{- 2\alpha} \|S_{t - s} \phi_s\|^2_{L_2(H, E)} ds \Big]\Big)^{1/2} dt 
\\&\leq \Big(\int_0^T s^{\gamma (\alpha - 1)} ds \Big)^{1/\gamma} \Big(\int_0^T \Big( \int_0^T \Big[\frac{\f (t - s) }{(t - s)^{\alpha}} \Big]^2 E\big[ |\psi_s|^2 \big] ds \Big)^{p'/2} dt \Big)^{1/p'}
\\&\leq \Big(\int_0^T s^{\gamma (\alpha - 1)} ds \Big)^{1/\gamma} \Big( \int_0^T \Big[\frac{\f (s) }{s^{\alpha}} \Big]^2 ds \Big)^{1/2} \Big( \int_0^T E \big[ |\psi_s|^2\big]^{p'/2} ds\Big)^{1/p'}
\\&\leq \Big(\int_0^T s^{\gamma (\alpha - 1)} ds \Big)^{1/\gamma} \Big( \int_0^T \Big[\frac{\f (s) }{s^{\alpha}} \Big]^2 ds \Big)^{1/2} \Big(E \Big[ \int_0^T |\psi_s|^{p'}ds\Big]\Big)^{1/p'}.
\end{align*}
Since
\(
\gamma (\alpha - 1) > -1 \ \Longleftrightarrow\ \alpha > \frac{1}{p'}, 
\)
the term in the last line is finite and the factorization formula \cite[Theorem~5.10]{DePrato} yields that 
\begin{align} \label{eq: fac fub}
\int_0^t S_{t-s} \phi_s d W_s = \frac{\sin (\pi \alpha)}{\pi} R_\alpha Y (t), \quad t \leq T,
\end{align}
with
\[
Y_t \triangleq \int_0^t (t - s)^{- \alpha} S_{t-s} \phi_s d W_s,
\]
where \(W\) is a standard cylindrical Brownian motion. In this formula the process \(Y\) has to be understood in the sense of the stochastic Fubini theorem (\cite[Theorem~4.33]{DePrato} or \cite[Proposition~6.1]{MR2067962}). In particular, latter yields that the stochastic convolution \(\int_0^t S_{t- s} \phi_s dW_s\) is well-defined. 

At the beginning of this step we defined \(R_\alpha\) on \(L^{p'}([0, T], E)\). We now show that a.a. paths of \(Y\) are in \(L^{p'}([0, T], E)\). Then, we can also conclude from Lemma \ref{lem: R compact} that the stochastic convolution has a continuous version.
We estimate
\begin{equation} \begin{split} \label{eq: fak bound 2}
E \Big[ \int_0^T \|Y_t\|^{p'}_E dt \Big] &\leq c_{p'} \int_0^T E \Big[ \Big(\int_0^t (t - s)^{- 2 \alpha} \|S_{t - s} \phi_s\|_{L_2(H, E)}^2 ds \Big)^{p'/2} \Big] dt
\\&\leq c_{p'}  E \Big[\int_0^T \Big(\int_0^t \Big[\frac{\f (t - s) }{(t - s)^{\alpha}} \Big]^2 |\psi_s|^2 ds \Big)^{p'/2} dt  \Big]
\\&\leq c_{p'} \Big( \int_0^T \Big[\frac{\f (s) }{s^{\alpha}} \Big]^2 ds \Big)^{p'/2}  E\Big[ \int_0^T |\psi_s|^{p'} ds \Big],
\end{split}
\end{equation}
where we use Burkholder's inequality (with constant \(c_{p'}\)) in the first and Young's inequality in the last line.
We conclude that the stochastic convolution \(\int_0^\cdot S_{\cdot - s} \phi_s dW_s\) has a continuous version.
Let us summarize the above observations.
\begin{lemma} \label{lem: factorization}
	Suppose that \(\alpha, p', \f, \phi\) and \(\psi\) are as above. 
	Then, the stochastic convolution 
	\[
	t \mapsto \int_0^t S_{t - s} \phi_s d W_s
	\]
	is well-defined and continuous. Furthermore, there exists a constant \(C\) depending on \(p', T\) and \(\f\) such that for every \(t \leq T\) 
	\begin{align}\label{eq: Burkholder fak}
	E \Big[ \sup_{s \leq t}\Big\| \int_0^s S_{s - r} \phi_r d W_r \Big\|^{p'}_E \Big] \leq C E \Big[ \int_0^t |\psi_s|^{p'} ds \Big].
	\end{align}
\end{lemma}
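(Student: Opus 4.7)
The plan is to assemble the ingredients already prepared in Step~0 into a clean statement. First, well-definedness of the stochastic convolution $\int_0^t S_{t-s}\phi_s dW_s$ is a direct consequence of the stochastic Fubini argument leading to \eqref{eq: fac fub}: the estimate \eqref{eq: fak bound 2} shows that the process $Y_t = \int_0^t (t-s)^{-\alpha} S_{t-s} \phi_s dW_s$ is well-defined and that its paths lie in $L^{p'}([0,T], E)$ almost surely, so that \Cref{lem: R compact} applies and $R_\alpha Y$ is a well-defined continuous $E$-valued process.

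Second, the factorization identity \eqref{eq: fac fub} equates the stochastic convolution (a priori only defined pointwise in $t$ up to a null set) with the continuous process $\tfrac{\sin(\pi\alpha)}{\pi} R_\alpha Y$, hence produces a continuous version. This settles the first part of the lemma.

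Third, for \eqref{eq: Burkholder fak} I would use the pointwise bound on $R_\alpha$ derived in Step~0 (the display following \eqref{eq: fak bound 1}), which gives, with $\gamma = p'/(p'-1)$,
\[
\sup_{s\leq t} \|R_\alpha Y(s)\|_E \leq \Big(\int_0^T u^{\gamma(\alpha - 1)} \|S_u\|^\gamma_{L(E)} du\Big)^{1/\gamma} \Big(\int_0^t \|Y_r\|_E^{p'} dr\Big)^{1/p'},
\]
since the integrand defining $R_\alpha Y(s)$ only probes $Y$ on $[0,s]\subset[0,t]$. Raising to the $p'$-th power, taking expectation, and combining with the computation \eqref{eq: fak bound 2} restricted to the interval $[0,t]$ (which works without change by truncating $\psi$ by $\mathbf 1_{[0,t]}$ before applying Young's inequality in the convolution in $r$) yields
\[
E\Big[\sup_{s\leq t}\|R_\alpha Y(s)\|_E^{p'}\Big] \leq C_1 \Big(\int_0^T [\f(s)/s^\alpha]^2 ds\Big)^{p'/2} E\Big[\int_0^t |\psi_r|^{p'} dr\Big]
\]
for some constant $C_1 = C_1(p',T)$. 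Multiplying by $(\sin(\pi\alpha)/\pi)^{p'}$ and invoking \eqref{eq: fac fub} delivers \eqref{eq: Burkholder fak} with a constant $C$ depending only on $p'$, $T$ and $\f$.

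The only nontrivial point is the last one, namely making sure that the sup in time and the $L^{p'}$-in-$\omega$ estimate combine cleanly with the right localization on $[0,t]$; this requires truncating $\psi$ to $[0,t]$ before applying Young's convolution inequality so that the right-hand side involves $\int_0^t|\psi_r|^{p'}dr$ rather than $\int_0^T|\psi_r|^{p'}dr$. Everything else is bookkeeping on top of Step~0.
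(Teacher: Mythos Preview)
Your proposal is correct and follows exactly the approach the paper takes: the paper's proof consists of the single sentence ``The final estimate follows from \eqref{eq: fak bound 1} and \eqref{eq: fak bound 2}. All other claims were proved before.'' You have simply spelled out how those two inequalities combine via the factorization identity \eqref{eq: fac fub}, and your remark about truncating $\psi$ to $[0,t]$ to get the correct right-hand side is a helpful clarification of a point the paper leaves implicit.
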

\begin{proof}
	The final estimate follows from \eqref{eq: fak bound 1} and \eqref{eq: fak bound 2}. All other claims were proved before.
\end{proof}
It seems that there is no estimate of the type \eqref{eq: Burkholder fak} in the monograph \cite{DePrato}. However, a related one can be found in its first edition, namely \cite[Proposition 7.9]{DePratoEd1}. 
\\

\noindent
\emph{Step 1: Definition of the Approximation Sequence.} 
Let \(0 < \alpha < 1/2\) and \(p' > 1/\alpha\) be as in \Cref{sec: main}.
Let \((\Omega, \mathcal{F}, (\cF_t)_{t \geq 0}, P)\) be a filtered probability space (with right-continuous and complete filtration) which supports a standard cylindrical Brownian motion \(W\) and an \(\mathcal{F}_0\)-measurable random variable \(\xi_0\) with distribution \(\eta\).
Take \(n \in \mathbb{N}\) and define a process \(X^n\) as follows: 
\(
X^n_0 \triangleq \xi_0 \1_{\|\xi_0\|_E \leq n}
\)
and for \(k\in \mathbb{Z}_+\) and \(k 2^{-n} < t \leq (k + 1) 2^{-n}\) we define inductively
\begin{align*}
X^n_t \triangleq S_{t - k 2^{-n}} X^n_{k 2^{-n}} &+ \int_{k 2^{-n}}^t S_{t - s} \mu(s, X^n_{k 2^{-n}}, P^{X^n}_{k 2^{-n}}) ds \\&+ \int_{k 2^{-n}}^t S_{t-s} \sigma  (s, X^n_{k 2^{-n}}, P^{X^n}_{k 2^{-n}}) d W_s, 
\end{align*} 
and 
\[
\nu_t^n \triangleq P^{X^n}_{k 2^{-n}}, \quad 
\mu^n (t, \omega, \nu) \triangleq \mu (t, \omega (k 2^{-n}), \nu), \quad \sigma^n (t, \omega, \nu) \triangleq \sigma (t, \omega (k 2^{-n}), \nu)
\]
where \((\omega, \nu) \in C(\mathbb{R}_+, E) \times M_c(E)\). At this point, recall the notation \(P^X_t = P \circ X_t^{-1}\).

Let us explain the induction procedure in more detail: 
Suppose that \(k \in \mathbb{Z}_+\) is such that \(X^n\) it is well-defined on \([0, k 2^{-n}]\) and 
\[
E \Big[ \sup_{s \leq k 2^{-n}} \|X_s^n\|^{p'}_E\Big] < \infty.
\]
Then, \Cref{lem: factorization} and the linear growth assumption \ref{A3} yield that \(X^n\) is also well-defined on \([0, (k + 1) 2^{-n}]\) and we also have 
\[
E \Big[\sup_{s \leq (k + 1) 2^{-n}} \|X_s^n\|^{p'}_E \Big] < \infty.
\]
The construction based on the factorization method yields that \(X^n\) has continuous paths. The following lemma collects our observations and further provides the dynamics of \(X^n\).

\begin{lemma} \label{lem: dynamics}
	The process \(X^n\) has a.s. continuous paths, for all \(T > 0\) it holds that 
	\[
	E\Big[\sup_{s \leq T} \|X^n_s\|^{p'} \Big] < \infty,
	\]
	and the dynamics of \(X^n\) are given by
	\[
	X^n_t = S_t X^n_0 + \int_0^t S_{t-s} \mu^n (s, X^n, \nu_s^n) ds + \int_0^t S_{t-s} \sigma^n (s, X^n, \nu_s^n) d W_s, \quad t \in \mathbb{R}_+. 
	\]
\end{lemma}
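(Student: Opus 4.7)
The plan is to proceed by induction over the dyadic grid points \(k 2^{-n}\). The induction hypothesis is that \(X^n\) has been constructed on \([0, k 2^{-n}]\) as a continuous adapted process with \(E[\sup_{s \leq k 2^{-n}} \|X^n_s\|^{p'}_E] < \infty\). Since \(\mu(\cdot, X^n_{k 2^{-n}}, \nu^n_\cdot)\) and \(\sigma(\cdot, X^n_{k 2^{-n}}, \nu^n_\cdot)\) are \(\mathcal{F}_{k 2^{-n}}\)-measurable in the spatial argument (hence progressively measurable as stochastic processes), the linear growth assumption \ref{A3} gives
\[
\|S_t \sigma(s, X^n_{k 2^{-n}}, \nu_s^n)\|_{L_2(H,E)} \leq \f(t)\big(1 + \|X^n_{k 2^{-n}}\|_E\big),
\]
so \Cref{lem: factorization} applied with \(\psi_s \triangleq 1 + \|X^n_{k 2^{-n}}\|_E\) on the interval \((k 2^{-n}, (k+1) 2^{-n}]\) yields that the stochastic convolution is well-defined, has a continuous modification, and satisfies
\[
E\Big[ \sup_{k2^{-n} \leq s \leq (k+1) 2^{-n}} \Big\|\int_{k2^{-n}}^s S_{s-r} \sigma(r, X^n_{k2^{-n}}, \nu_r^n) d W_r\Big\|_E^{p'} \Big] \leq C \, E\big[ 1 + \|X^n_{k2^{-n}}\|^{p'}_E \big] < \infty.
\]
The deterministic integral is continuous by dominated convergence and bounded by \(C\, (1 + \|X^n_{k 2^{-n}}\|_E)\) using \eqref{eq: LG mit op norm} and \(\|S_t\|_{L(E)} \leq M e^{\omega t}\). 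Finally, the term \(S_{t - k 2^{-n}} X^n_{k 2^{-n}}\) is continuous in \(t\) (by strong continuity of \(S\)) and controlled in \(L^{p'}\) by the same semigroup growth estimate. Combining these three contributions advances the induction hypothesis to \([0, (k+1) 2^{-n}]\), yielding continuity and the \(L^{p'}\)-moment bound on every finite horizon.

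To obtain the claimed global mild form, I would unwind the recursion. Fix \(t \in (k 2^{-n}, (k+1) 2^{-n}]\) and plug the induction formula for \(X^n_{k 2^{-n}}\) into
\[
X^n_t = S_{t - k 2^{-n}} X^n_{k 2^{-n}} + \int_{k 2^{-n}}^t S_{t - s} \mu(s, X^n_{k 2^{-n}}, \nu_s^n) ds + \int_{k 2^{-n}}^t S_{t - s} \sigma(s, X^n_{k 2^{-n}}, \nu_s^n) d W_s.
\]
The semigroup property \(S_{t - s} = S_{t - k 2^{-n}} \circ S_{k 2^{-n} - s}\) for \(s \leq k 2^{-n}\), together with linearity of \(S_{t - k 2^{-n}}\) (which can be pulled inside the Bochner integral and, by the standard commutation property of bounded operators with Itô integrals, inside the stochastic integral as well; see, e.g., \cite[Lemma 2.4.1]{DePrato}), merges the two past pieces into a single integral from \(0\) to \(k 2^{-n}\). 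On the remaining interval \((k 2^{-n}, t]\) the definitions give \(\mu(s, X^n_{k 2^{-n}}, \nu_s^n) = \mu^n(s, X^n, \nu_s^n)\) and \(\sigma(s, X^n_{k 2^{-n}}, \nu_s^n) = \sigma^n(s, X^n, \nu_s^n)\). Concatenating the integrals over \([0, k 2^{-n}]\) and \([k 2^{-n}, t]\) produces the stated representation.

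I expect the main technical subtlety to be the justification that the bounded operator \(S_{t - k 2^{-n}}\) can be moved inside the stochastic integral; this is a standard fact for mild solutions in the Da Prato–Zabczyk framework but should be cited explicitly. The remaining ingredients — the inductive application of \Cref{lem: factorization}, the elementary estimate on the Bochner integral via \ref{A3}, and the semigroup algebra that glues successive intervals — are routine. Together they deliver continuity, the \(L^{p'}\)-moment bound, and the global mild form simultaneously.
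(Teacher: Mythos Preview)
Your proposal is correct and matches the paper's approach essentially line for line: the paper establishes continuity and the \(L^{p'}\)-moment bound by the same inductive application of \Cref{lem: factorization} with \ref{A3} (this is spelled out in the text immediately preceding the lemma), and then proves the global mild formula by the identical induction step --- substituting the formula for \(X^n_{k2^{-n}}\), pulling \(S_{t-k2^{-n}}\) through the Bochner and stochastic integrals via the semigroup property, and concatenating the pieces. Your flagged subtlety about commuting a bounded operator with the stochastic integral is indeed used implicitly in the paper without an explicit citation.
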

\begin{proof}
	It is only left to prove the formula for the dynamics. We use induction. Suppose that \(X^n\) has the claimed dynamics on \([0, k 2^{-n}]\). Then, for \(k 2^{-n} < t \leq (k + 1) 2^{-n}\) we obtain
	\begin{align*}
	X^n_t &= S_{t - k 2^{-n}} \Big( S_{k 2^{-n}} X^n_0 + \int_0^{k 2^{-n}} S_{k 2^{-n} - s} \mu^n (s, X^n, \nu^n_s) ds \\&\hspace{7cm}+ \int_0^{k 2^{-n}} S_{k 2^{-n} -s} \sigma^n  (s, X^n, \nu^n_s) d W_s \Big)  \\&\qquad \qquad +  \int_{k 2^{-n}}^t S_{t - s} \mu^n (s, X^n, \nu^n_s) ds + \int_{k 2^{-n}}^t S_{t-s} \sigma^n (s, X^n, \nu^n_s) d W_s
	\\&= S_t X^n_0 + \int_0^{t} S_{t - s} \mu^n (s, X^n, \nu^n_s) ds + \int_0^t S_{t -s} \sigma^n  (s, X^n, \nu^n_s) d W_s.
	\end{align*}
	Consequently, the proof is complete.
\end{proof}

\noindent
\emph{Step 2: Uniform Moment Bound.} In this step we derive a moment estimate which is useful to establish tightness of the family \(\{X^n \colon n \in \mathbb{N}\}\).
\begin{lemma} \label{lem: moment bound}
	For every \(T > 0\) and every bounded set \(K \subset E\) we have 
	\[
	\sup_{n \in \mathbb{N}}E \Big[\sup_{s \leq T}  \|X^n_s\|^{p'}_E \1_{X^n_0 \in K} \Big] < \infty.
	\]
\end{lemma}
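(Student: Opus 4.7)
The plan is to start from the mild formulation in \Cref{lem: dynamics}, take the $p'$-th moment of the supremum, bound each of the three terms separately using \ref{A3}, and close the resulting integral inequality with Gronwall. Define
\[
f^n(t) \triangleq E\Big[\sup_{s \leq t} \|X^n_s\|^{p'}_E \1_{X^n_0 \in K}\Big],
\]
which is finite for each fixed $n$ and $t \leq T$ thanks to \Cref{lem: dynamics}. The goal is to derive a bound of the form $f^n(t) \leq C_{K,T} + C_T\int_0^t f^n(r) dr$ with constants independent of $n$.

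First I would handle the semigroup term: since $S$ is a $C_0$-semigroup, $\|S_t\|_{L(E)} \leq M_T$ on $[0,T]$, and on $\{X^n_0 \in K\}$ we have $\|X^n_0\|_E \leq R_K \triangleq \sup_{x \in K} \|x\|_E$, so $\sup_{s\leq t} \|S_s X^n_0\|^{p'}_E \1_{X^n_0 \in K} \leq M_T^{p'} R_K^{p'}$. The deterministic drift integral is estimated by Jensen's (or Hölder's) inequality together with the operator-norm bound of $\mu$ in \eqref{eq: LG mit op norm} and the boundedness of $\|S_\cdot\|_{L(E)}$ on $[0,T]$, giving
\[
\sup_{s \leq t}\Big\|\int_0^s S_{s-r}\mu^n(r,X^n,\nu^n_r)dr\Big\|_E^{p'} \leq C_T \int_0^t \big(1 + \|X^n_{\lfloor r 2^n\rfloor 2^{-n}}\|_E\big)^{p'} dr.
\]
The stochastic convolution is the term for which \Cref{lem: factorization} was designed: with $\phi_r \triangleq \sigma^n(r, X^n, \nu^n_r)$ and $\psi_r \triangleq 1 + \|X^n_{\lfloor r 2^n\rfloor 2^{-n}}\|_E$, the first bound in \ref{A3} gives precisely $\|S_t \phi_r\|_{L_2(H,E)} \leq \f(t)|\psi_r|$, so \eqref{eq: Burkholder fak} yields
\[
E\Big[\sup_{s\leq t}\Big\|\int_0^s S_{s-r}\sigma^n(r,X^n,\nu^n_r) dW_r\Big\|^{p'}_E\Big] \leq C E\Big[\int_0^t \big(1 + \|X^n_{\lfloor r 2^n\rfloor 2^{-n}}\|_E\big)^{p'} dr\Big].
\]

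Combining the three estimates, using that $\1_{X^n_0 \in K}$ is $\cF_0$-measurable (so it passes inside both Bochner and stochastic integrals), and observing that $\|X^n_{\lfloor r 2^n\rfloor 2^{-n}}\|_E \leq \sup_{s\leq r}\|X^n_s\|_E$, one obtains
\[
f^n(t) \leq C_{K,T} + C_T \int_0^t \big(1 + f^n(r)\big)\, dr, \quad t \leq T,
\]
with constants depending only on $K$, $T$, $p'$, $\f$ and $C_T$ (in particular \emph{not} on $n$). Gronwall's inequality applied to the a priori finite function $f^n$ yields $\sup_{n}f^n(T) < \infty$, which is the claim.

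The only genuine subtlety is ensuring the constants on the right-hand side are truly $n$-independent; this hinges on the fact that the bounds in \ref{A3} do not involve the measure variable $\nu$ and depend on $x$ only through $\|x\|_E$, so the freezing in $\mu^n$ and $\sigma^n$ at the dyadic time $\lfloor r 2^n\rfloor 2^{-n}$ is controlled uniformly in $n$ by $\sup_{s\leq r}\|X^n_s\|_E$. The a priori finiteness of $f^n(t)$, already provided by \Cref{lem: dynamics}, is what legitimizes applying Gronwall rather than only formally deriving the inequality.
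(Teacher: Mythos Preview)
Your proposal is correct and follows essentially the same route as the paper: split the mild formulation from \Cref{lem: dynamics} into the three terms, bound the semigroup term by boundedness of $K$ and of $\|S_\cdot\|_{L(E)}$ on $[0,T]$, the drift via \eqref{eq: LG mit op norm}, and the stochastic convolution via \Cref{lem: factorization} with $\psi_r = (1+\|X^n_{\lfloor r2^n\rfloor 2^{-n}}\|_E)\1_{X^n_0\in K}$, then close with Gronwall using the a priori finiteness from \Cref{lem: dynamics}. Your explicit remarks on why the constants are $n$-independent and why the indicator passes into the stochastic integral are exactly the points the paper handles implicitly.
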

\begin{proof}
	Using \Cref{lem: dynamics,lem: factorization} together with the linear growth assumption~\ref{A3}, we obtain
	\begin{align*}
	E \Big[ \sup_{s \leq t} \|X_s^n\|^{p'}_E \1_{X^n_0 \in K} \Big] 
	&\leq 3^{p' + 1} \Big(\sup_{x \in K} \sup_{s \leq t} \|S_s x\|^{p'}_E + E \Big[ \Big( \int_0^t \|\mu^n (s, X^n, \nu^n_s) \|_E ds \Big)^{p'} \1_{X^n_0 \in K}\Big] \\&\hspace{3.25cm}+ E \Big[ \sup_{s \leq t} \Big\| \int_0^t S_{t - s} \sigma^n (s, X^n, \nu^n_s) \1_{X^n_0 \in K} d W_s \Big\|^{p'} \Big] \Big)
	\\
	&\leq C \Big(1 + E \Big[ \Big( \int_0^t \Big(1 + \sup_{r \leq t} \|X^n_r\|_E\Big) ds \Big)^{p'} \1_{X^n_0 \in K}\Big] \\&\hspace{3.25cm}+ E \Big[ \int_0^t \Big(1 + \sup_{r \leq s} \|X^n_r\|_E \1_{X^n_0 \in K} \Big)^{p'}  ds \Big] \Big)
	\\
	&\leq C \Big(1 + \int_0^t E \Big[ \sup_{r \leq s}\|X_r^n\|^{p'}_E \1_{X^n_0 \in K} \Big] ds \Big)
	\end{align*}
	for all \(t \leq T\).
	As \(t \mapsto  E [\sup_{r \leq t}\|X_r^n\|^{p'}_E]\) is locally bounded on \([0, T]\) by \Cref{lem: dynamics}, Gronwall's lemma (\cite[Lemma 4.4.15]{LS01}) completes the proof.
\end{proof}
~

\noindent
\emph{Step 3: Tightness of \(\{X^n \colon n \in \mathbb{N}\}\).} By the Arzel\`a--Ascoli characterization of tightness (\cite[Theorem~23.4]{Kallenberg}), it suffices to prove that for every \(T > 0\) the family \(\{X^n |_{[0, T]} \colon n \in \mathbb{N}\}\) is tight when seen as Borel probability measures on \(C([0, T], E)\) endowed with the uniform topology. We adapt the compactness method from \cite{doi:10.1080/17442509408833868}.

The factorization formula (see Step 0) and \Cref{lem: dynamics} yield that 
\begin{align}\label{eq: facto}
X^n = S_\cdot X^n_0 + R_1 \mu^n + \tfrac{\sin (\pi \alpha)}{\pi} R_\alpha Y^n, 
\end{align}
where \(\mu^n \equiv (s \mapsto \mu^n(s, X^n, \nu^n_s))\) and 
\[
Y^n_t \triangleq \int_0^t (t - s)^{- \alpha} S_{t-s} \sigma^n(s, X^n, \nu^n_s) d W_s, \quad t \leq T.
\]
Fix \(\varepsilon > 0\). Since a.s. \(X^n_0 = \xi_0 \1_{\|\xi_0\|_E \leq n} \to \xi_0\), we have \(X^n_0 \to \eta\) weakly, which implies that \(\{X^n_0 \colon n \in \mathbb{N}\}\) is tight. Consequently, there exists a compact set \(K \subset E\) such that 
\[
\sup_{n \in \mathbb{N}} P (X^n_0 \not \in K) \leq \frac{\varepsilon}{2}.
\]
Now, we define 
\begin{align*}
K_R \triangleq \Big\{ \omega \in\ &C ([0, T], E)\colon \omega = S x_0 + R_1 \psi + \tfrac{\sin (\pi \alpha)}{\pi}R_\alpha \phi, \\& x_0 \in K, \phi, \psi \in L^{p'} ([0, T], E)  \text{ with } \int_0^T \|\psi (s)\|_E^{p'} ds \leq R, \ \int_0^T \|\phi (s)\|_E^{p'} ds \leq R \Big\}.
\end{align*}
For every \(t \leq T\) the set \(\{S_t x \colon x \in K\}\) is compact by the compactness of the semigroup (and the compactness of \(K\) for \(t = 0\)). By \cite[Lemma I.5.2]{EN00}, the map
\[
[0, T] \times K \ni (t, x) \mapsto S_t x \in E
\]
is uniformly continuous. Thus, the Arzel\`a--Ascoli theorem (\cite[Theorem A.5.2]{Kallenberg}) yields that the set \(\{S x|_{[0,T]} \colon x \in K\}\) is relatively compact in \(C([0, T], E)\), and we conclude from \Cref{lem: R compact} that \(K_R\) is relatively compact in \(C([0, T], E)\). 
Due to \eqref{eq: facto} and Chebyshev's inequality, we have 
\begin{align*}
P ( X^n|_{[0, T]} \in K_R ) &\geq 1 - P(X^n_0 \not \in K) - P \Big( \int_0^T \|\mu^n (s, X^n, \nu^n_s)\|^{p'}_E \1_{X^n_0 \in K} ds > R \Big) \\&\hspace{2cm}-  P \Big( \int_0^T \|Y^n_s\|^{p'}_{E} \1_{X^n_0 \in K} ds > R \Big) 
\\&\geq 1 - \frac{\varepsilon}{2} - \frac{1}{R} \Big( E \Big[ \int_0^T \|\mu^n(s, X^n, \nu^n_s)\|^{p'}_E \1_{X^n_0 \in K} ds \Big] \\&\hspace{2cm}+ E \Big[ \int_0^T \|Y^n_s\|^{p'}_{E}\1_{X^n_0 \in K} ds \Big] \Big).
\end{align*}
By virtue of \eqref{eq: fak bound 2}, \ref{A3} and \Cref{lem: moment bound}, there exists an \(R\) independent of \(n\) such that
\begin{align*}
P ( X^n|_{[0, T]} \in K_R ) \geq 1 - \varepsilon,
\end{align*}
which implies the tightness of the family \(\{X^n|_{[0, T]} \colon n \in \mathbb{N}\}\). Consequently, the family \(\{X^n \colon n \in \mathbb{N}\}\) is tight, too.
\\
~

\noindent
\emph{Step 4: The Cylindrical Martingale Problem.}
By Step 3, we can extract a weakly convergent subsequence from the family \(\{X^n \colon n \in \mathbb{N}\}\). For simplicity, we ignore this subsequence in our notation and assume that \(X^n \to X\) weakly. With little abuse of notation, we write \(P^X_t\) for the law of \(X_t\).

We now study the martingale property of a certain class of test processes.
Take \(g \in C^2_c (\mathbb{R}), f \in C_c(\mathbb{R})\) and \(y^* \in D (A^*)\). The coordinate process on \(C(\mathbb{R}_+, E)\) is denoted by \(\X\). We define 
\[
\Z \triangleq f(\|\X_0\|_E)\Big(g  (\langle \X, y^* \rangle_E ) - g (\langle \X_0, y^*\rangle_E) - \int_0^\cdot \mathcal{L}(s) ds\Big), 
\]
where
\begin{align*}
\mathcal{L} (s) \triangleq \big( \langle \X_s, A^* y^* \rangle_E &+ \langle \mu (s, \X_s, P^X_s), y^* \rangle_E \big) g' (\langle \X_s, y^* \rangle_E)+ \tfrac{1}{2} \|\sigma^* (s, \X_s, P^X_s) y^*\|^2_Hg'' (\langle \X_s, y^*\rangle_E).
\end{align*}
In the following we show that \(\Z\) is a \(P \circ X^{-1}\)-martingale (for the natural filtration of \(\X\)).

For each \(n \in \mathbb{N}\) we define 
\[
Z^n \triangleq f(\|X^n_0\|_E)\Big(g  (\langle X^n, y^* \rangle_E ) - g (\langle X^n_0, y^*\rangle_E) - \int_0^\cdot \mathcal{L}^n (s) ds\Big), 
\]
where
\begin{align*}
\mathcal{L}^n (s) \triangleq \big( \langle X^n_s, A^* y^* \rangle_E &+ \langle \mu^n (s, X^n, \nu^n_s), y^* \rangle_E \big) g' (\langle X^n_s, y^* \rangle_E)\\&+ \tfrac{1}{2} \| \sigma^{n, *} (s,X^n, \nu^n_s) y^*\|^2_Hg'' (\langle X^n_s, y^*\rangle_E).
\end{align*}
Recall the mild dynamics of \(X^n\) as given in \Cref{lem: dynamics}. Thanks to \cite[Theorem 13]{MR2067962}, where we use the second part of \ref{A3}, we can pass to its analytically weak dynamics, i.e. we have 
\begin{align*}
\langle X^n, y^*\rangle_E = \langle X^n_0, y^*\rangle_E & + \int_0^\cdot \big( \langle X^n_s, A^* y^*\rangle_E  + \langle \mu^n(s, X^n, \nu^n_s), y^*\rangle_E \big) ds \\&+ \int_0^\cdot \langle \sigma^{n,*} (s, X^n, \nu^n_s) y^*, d W_s\rangle_H.
\end{align*}
By virtue of these dynamics, It\^o's formula yields that 
\[
Z^n = f (\|X^n_0\|_E) \Big( \int_0^\cdot g' (\langle X^n_s, y^*\rangle_E) d \Big( \int_0^s \langle \sigma^{n, *} (r, X^n, \nu^n_r) y^*, d W_r \rangle_H \Big)\Big).
\]
In particular, \(Z^n\) is a local martingale.
Let \(m > 0\) be such that \(f (x) = 0\) for \(|x| \geq m\). We denote the quadratic variation process by  \([\cdot, \cdot]\) and we deduce from \ref{A3} that
\begin{align*}
E \big[ [Z^n, Z^n]_T \big] &= E \Big[ \int_0^T | f(\|X^n_0\|_E) g' (\langle X^n_s, y^*\rangle_E)|^2 \| \sigma^{n, *} (s, X^n, \nu^n_s) y^*\|^2_Hds \Big]
\\&\leq C \int_0^T E \big[ \|\sigma^n (s, X^n, \nu^n_s)\|^2_{L (H, E)} \1_{\|X^n_0\|_E \leq m} \big] ds 
\\&\leq C \int_0^T E \big[ \|\sigma^n (s, X^n, \nu^n_s)\|^{p'}_{L (H, E)} \1_{\|X^n_0\|_E \leq m} \big]^{2/p'} ds
\\&\leq C \Big(1 + \sup_{s \leq T} E \big[  \|X^n_s\|^{p'}_E \1_{\|X^n_0\|_E \leq m}\big]^{2/ p'}\Big).
\end{align*}
Hence, \(Z^n\) is even a true martingale.
Furthermore, using this estimate in combination with Doob's inequality, we obtain that 
\[
\sup_{n \in \mathbb{N}} \sup_{s \leq T} E \big[ |Z^n_s|^2 \big] \leq C \Big(1 + \sup_{n \in \mathbb{N}} \sup_{s \leq T} E \big[ \|X^n_s\|_E^{p'} \1_{\|X^n_0\| \leq m} \big]^{2/p'}\Big).
\]
As the r.h.s. is finite thanks to \Cref{lem: moment bound}, the family \(\{Z^n_t \colon t \in [0, T], n \in \mathbb{N}\}\) is uniformly integrable.
For every \(t \in \mathbb{R}_+\) the map \(\omega \mapsto \Z_t (\omega)\) is continuous (on \(C(\mathbb{R}_+, E)\) endowed by the local uniform topology)  by assumption \ref{A2}.
By virtue of \cite[Proposition~IX.1.12]{JS}, we can conclude that \(\Z\) is a martingale once we show that 
\[
P ( |Z^n_t - \Z_t (X^n)| \geq \varepsilon) \to 0, \quad \varepsilon, t > 0.
\]
Evidently, we have 
\begin{align*}
|Z^n - \Z (X^n)| &\leq \|f\|_\infty \int_0^\cdot | \mathcal{L}^n (s) - \mathcal{L} (X^n) (s) | ds.
\end{align*}
By Skorokhod's coupling theorem we can assume that \(X\) and \(X^1, X^2, \dots\) are defined on the same probability space and that \(X^n \to X\) almost surely (in the local uniform topology). 
For every \(t > 0\) we have a.s.
\begin{align*}
\|X^n_{\lfloor t 2^n\rfloor 2^{-n}} - X_t\|_E &\leq \|X^n_{\lfloor t 2^n\rfloor 2^{-n}} - X_{\lfloor t 2^n\rfloor 2^{-n}}\|_E + \|X_{\lfloor t 2^n\rfloor 2^{-n}} - X_t\|_E
\\&\leq \sup_{s \leq t} \|X^n_s - X_s\|_E + \|X_{\lfloor t 2^n\rfloor 2^{-n}} - X_t\|_E \to 0.
\end{align*}
Hence, for every \(\phi \in C_b(E)\) and \(t > 0\), the dominated convergence theorem yields that 
\begin{align*}
\int \phi(y) \nu^n_t (dy) = E \big[ \phi(X^n_{\lfloor t 2^n \rfloor 2^{-n}})\big] \to E \big[ \phi (X_t) \big] = \int \phi(y) P^X_t (dy),
\end{align*}
which implies that \(\nu^n_t \to P^X_t\) weakly.
Take \(\omega, \omega^1, \omega^2, \dots \in C(\mathbb{R}_+, E)\) such that \(\omega^n \to \omega\) in the local uniform topology. By the Arzel\`a--Ascoli theorem (\cite[Theorem~A.5.2]{Kallenberg}), there exists a compact set \(K \subset E\) such that \(\omega^n (s) \in K\) for all \(s \in [0, T]\) and \(n \in \mathbb{N}\), and 
\begin{align}\label{eq: part AA theo}
\sup_{n \in \mathbb{N}} \sup\big\{ \|\omega^n (s) - \omega^n(r)\|_E \colon s, r \in [0, T], |s - r| \leq h\big\} \to 0 \text{ as } h \searrow 0.
\end{align}
Fix \(t \leq T\) and \(\varepsilon > 0\) and denote by \(d_c\) a metric which induces the topology on \(M_c(E)\), i.e. the topology of convergence in distribution.  By \ref{A2}, the function \(\langle \mu (t, \cdot, \cdot), y^*\rangle_E\) is uniformly continuous on the compact set \(G \triangleq K \times \{P^X_t, \nu^1_t, \nu^2_t, \dots\}\). 
Thus, there exists a \(\delta > 0\) such that 
\begin{align*}
(x, \nu), (y, \eta) \in G,\ \  \|x - y\|_E &+ d_c(\nu, \eta) \leq \delta \\& \Longrightarrow \ |\langle \mu (t, x, \nu) - \mu (t, y, \eta), y^*\rangle_E| \leq \varepsilon.
\end{align*}
As \(\nu^n_t \to P^X_t\) weakly, there exists an \(N \in \mathbb{N}\) such that \(d_c(\nu^n_t, P^X_t) \leq \tfrac{\delta}{2}\) for all \(n \geq N\).
Furthermore, thanks to \eqref{eq: part AA theo}, there exists an \(M \in \mathbb{N}\) such that for all \(n \geq M\)
\[
\|\omega^n (\lfloor t 2^n\rfloor 2^{-n}) - \omega^n (t)\|_E \leq \tfrac{\delta}{2}.
\]
Thus, for all \(n \geq N \vee M\) we have 
\[
|\langle \mu (t, \omega^n (\lfloor t 2^n\rfloor 2^{-n}), \nu^n_t) - \mu (t, \omega^n(t), P^X_t), y^*\rangle_E| \leq \varepsilon.
\]
We conclude that a.s. for all \(s \in [0, T]\)
\begin{align*}
 |\langle \mu^n(s&,X^n, \nu^n_s) - \mu (s, X^n_s, P^X_s), y^*\rangle_E| 
 \to 0.
\end{align*}
Now, using the same argument for the coefficient \(\sigma\) and the dominated convergence theorem (which we can use thanks to \ref{A3}), we obtain that a.s. 
\[
|Z^n_t - \Z_t (X^n)| \to 0.
\]
We conclude that \(\Z\) is a \(P \circ X^{-1}\)-martingale. 
\\
~~

\noindent
\emph{Step 5: Conclusion.} We are in the position to complete the proof. 
Take \(y^* \in D(A^*)\) and define
\begin{align*}
U_N &\triangleq \inf (t \in \mathbb{R}_+ \colon |\langle \X_t, y^*\rangle_E| \geq N), \\ S_N &\triangleq \begin{cases} 0,& \|\X_0\|_E > N,\\
+ \infty, & \|\X_0\|_E \leq N, \end{cases} \\T_N &\triangleq U_N \wedge S_N,
\end{align*}
for \(N \in \mathbb{N}\). Clearly, by the continuous paths of \(\X\), \(T_N\) is a stopping time for the filtration generated by \(\X\).
Using Step~4 with  \(g \in C^2_c(\mathbb{R})\) such that \(g (x) = x\) for all \(|x| \leq N\) and \(f \in C_c(\mathbb{R})\) such that \(f (x) = 1\) for all \(|x| \leq N\) yields that the process 
\[
\langle \X_{\cdot \wedge T_N}, y^*\rangle_E - \langle \X_0, y^*\rangle_E - \int_0^{\cdot \wedge T_N} \big( \langle \X_s, A^* y^*\rangle_E + \langle \mu (s, \X_s, P^X_s), y^* \rangle_E \big) ds
\]
is a \(P \circ X^{-1}\)-martingale. Consequently, since \(T_N \nearrow \infty\) as \(N \to \infty\), the process
\[
\langle \X, y^*\rangle_E - \langle \X_0, y^*\rangle_E- \int_0^\cdot \big( \langle \X_s, A^* y^*\rangle_E + \langle \mu (s, \X_s, P^X_s), y^* \rangle_E \big) ds
\]
is a local \(P \circ X^{-1}\)-martingale. By virtue of the proof of \cite[Proposition 5.4.6]{KaraShre}, using the same argument with \(g \in C^2_c(\mathbb{R})\) such that \(g (x) = x^2\) for \(|x| \leq N\) yields that its quadratic variation process is given by
\(
\int_0^\cdot \| \sigma^* (s, \X_s, P^X_s) y^*\|^2_H ds.
\)
Recall that \(y^* \in D(A^*)\) was arbitrary.
As \(D(A^*)\) separates points, the representation theorem \cite[Theorem 3.1]{doi:10.1142/S0219025707002816} yields the existence of a standard cylindrical Brownian motion \(B\) defined on a standard extension of \((C(\mathbb{R}_+, E), \mathcal{B}(C(\mathbb{R}_+,E)), P \circ X^{-1})\) with the canonical filtration generated by \(\X\), such that 
\begin{align*}
\langle \X, y^*\rangle_E - \langle \X_0, y^*\rangle_E - \int_0^\cdot \big( \langle \X_s, A^* y^*\rangle_E + \langle \mu &(s, \X_s, P^X_s), y^* \rangle_E \big) ds \\&= \int_0^\cdot \langle \sigma^* (s, \X_s, P^X_s) y^*, d B_s \rangle_H
\end{align*}
for all \(y^* \in D(A^*)\). 

Finally, noting that the initial value \(\X_0\) is distributed according to \(\eta\) under the probability measure \(P \circ X^{-1}\), we conclude that \(\X\) is an analytically weak solution process to the MKV SPDE with coefficients \((A, \mu, \sigma, \eta)\). By \cite[Theorem 13]{MR2067962} it is also a mild solution process and the existence of a martingale solution is proved.
\qed

\section{Proof of \Cref{theo: main wasserstein cont}} \label{sec: pf outline wasserstein}
\Cref{theo: main wasserstein cont} can be proved similar to \Cref{theo: main} and in the following outline the few necessary changes where we use the notation from \Cref{sec: pf theo 1}.
In case \(\eta \in M^{p'}_w(E)\), \Cref{lem: moment bound} holds for \(K = E\) and therefore, \(\nu^n_t \to P^X_t\) in \(M^{p^\circ}_w(E)\) by \cite[Theorem 5.5]{Carmona18} as \(p^\circ < p'\). Furthermore, in the definition of \(\mathsf{Z}\) and \(Z^n\) (see Step 4 in the proof of \Cref{theo: main}) we can take \(f \equiv 1\). In Step~5 it is not necessary to introduce \(S_N\) which means one can use \(T_N \equiv U_N\).  
Finally, we explain why \((t \mapsto P^X_t) \in C(\mathbb{R}_+, M^{p'}_w(E))\). Let \(X^1, X^2, \dots\) be the approximation sequence and let \(X\) be an accumulation point, i.e. for simplicity assume that \(X^n \to X\) weakly. Then, as \Cref{lem: moment bound} holds for \(K = E\), we obtain
\[
E \Big[ \sup_{s \leq T} \|X_s\|^{p'}_E \Big] \leq \liminf_{n \to \infty} E \Big[ \sup_{s \leq T} \|X^n_s\|^{p'}_E\Big] \leq \sup_{n \in \mathbb{N}} E \Big[ \sup_{s \leq T} \|X^n_s\|^{p'}_E\Big] < \infty.
\]
Since a.s. \(\|X_s\|^{p'}_E \to \|X_t\|^{p'}_E\) for \(s \to t\) by the continuous paths of \(X\), the dominated convergence theorem yields that \((t \mapsto P^X_t) \in C(\mathbb{R}_+, M^{p'}_w(E))\).
No further changes are needed. \qed

\section{Proof of \Cref{theo: uni}} \label{sec: pf theo uni}
The basic strategy of proof is borrowed from  \cite[Theorem 3.1]{Funkai84}. Let \(\gamma \in C(\mathbb{R}_+, M^p_w(E))\). 
We now define solutions to a certain class of classical SPDEs.
\begin{definition} \label{def: SPDE classical}
	We call a triplet \((\B, W, X)\) a \emph{martingale solution} to the SPDE with coefficients \((A, \mu, \sigma, \gamma, \eta)\) if \(\B\) is a filtered probability space with right-continuous and complete filtration which supports a standard cylindrical Brownian motion \(W\) and a continuous \(E\)-valued adapted process \(X\) such that the following hold:
	\begin{enumerate}
		\item[\textup{(i)}] \(X_0 \sim \eta\).
		\item[\textup{(ii)}]	 Almost surely for all \(t \in \mathbb{R}_+\)
		\begin{align*}
		\int_0^t \|S_{t - s} \mu(s, X_s, \gamma(s))\|_E ds + \int_0^t \|S_{t - s} \sigma(s, X_s, \gamma(s))\|_{L_2 (H, E)}^2 ds < \infty.
		\end{align*}
		\item[\textup{(iii)}]	Almost surely for all \(t \in \mathbb{R}_+\)
		\begin{equation*}
		\begin{split}
		X_t = S_t X_0 + \int_0^t S_{t - s} \mu(s, X_s, \gamma(s)) ds + \int_0^t S_{t - s} \sigma(s, X_s, \gamma (s)) d W_s.
		\end{split}
		\end{equation*}
	\end{enumerate}
\end{definition}

The proof of the following lemma is given at the end of this section. 
\begin{lemma} \label{lem: uni main tech}
	For \(i = 1, 2\), let \((\B^i, W^i, X^i)\) be a martingale solution to the SPDE with coefficients \((A, \mu, \sigma, \gamma^i, \eta)\) and let \(u^i (t)\) be the law of \(X^i_t\) for \(t \in \mathbb{R}_+\). Furthermore, assume that \(u^i \in C(\mathbb{R}_+, M_w^p(E))\) for \(i = 1, 2\). Then, for every \(T > 0\) and \(m > 0\) such that 
	\[
	\max_{i = 1, 2} \sup_{s \leq T} \|\gamma^i(s)\|_p \leq m, 
	\]
	 there exists a constant \(C = C(p, T, S, m) > 0\) such that 
	\[
	|\mathsf{w}_p (u^1 (s), u^2(s))|^p \leq C \int_0^s | \kappa (\mathsf{w}_p(\gamma^1(r), \gamma^2(r)))|^p dr, \qquad 0 \leq s \leq T,
	\]
	where \(\kappa = \kappa_{T, m}\) is as in \ref{U1} or \ref{U2}, respectively.
\end{lemma}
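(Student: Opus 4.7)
The plan is to realize both solutions on a single filtered probability space carrying a common cylindrical Brownian motion $W$ and a common $\cF_0$-measurable initial value $\xi_0 \sim \eta$, and then to bound the Wasserstein distance of the marginals by the $p$-th moment of the pathwise difference. Under \ref{U1} (respectively \ref{U2}) the classical SPDE of \Cref{def: SPDE classical} with the frozen coefficient $\gamma^i$ is pathwise Lipschitz in the state variable with the appropriate time integrability, so the standard existence and pathwise uniqueness theory for semilinear mild SPDEs produces a mild strong solution, unique in law, on any prescribed driving system. Fixing one such system $(\B, W, \xi_0)$ and denoting the two resulting processes by $\widetilde X^1, \widetilde X^2$, uniqueness in law gives that $\widetilde X^i_t \sim u^i(t)$, so the coupling inequality $\mathsf{w}_p(u^1(t),u^2(t))^p \le E[\|\widetilde X^1_t - \widetilde X^2_t\|_E^p]$ reduces the problem to bounding the right-hand side.

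\textbf{Pathwise difference.} Set $Z_t = \widetilde X^1_t - \widetilde X^2_t$, $\Delta\mu_s = \mu(s,\widetilde X^1_s,\gamma^1(s)) - \mu(s,\widetilde X^2_s,\gamma^2(s))$, $\Delta\sigma_s = \sigma(s,\widetilde X^1_s,\gamma^1(s)) - \sigma(s,\widetilde X^2_s,\gamma^2(s))$, and $\Psi_s = \|Z_s\|_E + \kappa(\mathsf{w}_p(\gamma^1(s),\gamma^2(s)))$, so that
\[
Z_t = \int_0^t S_{t-s}\Delta\mu_s\, ds + \int_0^t S_{t-s}\Delta\sigma_s\, dW_s.
\]
Under \ref{U1}, the drift is controlled by $\int_0^t \g(t-s)\Psi_s\, ds$, and H\"older's inequality with exponent $p/(p-1)$ together with the integrability of $\g^{p/(p-1)}$ gives $\bigl\|\int_0^t S_{t-s}\Delta\mu_s\, ds\bigr\|_E^p \le C\int_0^t \Psi_s^p\, ds$. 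The stochastic convolution is treated via \Cref{lem: factorization} applied with $\phi_s = \Delta\sigma_s$ and $\psi_s = \Psi_s$ (the difference bound $\|S_t\Delta\sigma_s\|_{L_2(H,E)} \le \f(t)\Psi_s$ coming from \ref{U1}, and $p > 1/\alpha$ being used to invoke the lemma), which yields $E\bigl[\bigl\|\int_0^t S_{t-s}\Delta\sigma_s\, dW_s\bigr\|_E^p\bigr] \le C\int_0^t E[\Psi_s^p]\, ds$. Under \ref{U2}, where $p$ may equal $2$, the factorization step is replaced by the usual Burkholder--Davis--Gundy inequality in the Hilbert--Schmidt norm, yielding a bound of the same shape.

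\textbf{Gronwall.} Taking expectations, expanding $\Psi_s^p$ via convexity and writing $\phi(t) = E[\|Z_t\|_E^p]$ for $t \in [0,T]$ gives
\[
\phi(t) \le C\int_0^t \phi(s)\, ds + C\int_0^t |\kappa(\mathsf{w}_p(\gamma^1(r),\gamma^2(r)))|^p\, dr.
\]
An a priori bound analogous to \Cref{lem: moment bound}, derived from the linear-growth halves of \ref{U1}/\ref{U2} combined with the uniform hypothesis $\sup_{s\le T}\|\gamma^i(s)\|_p \le m$ on $\gamma^i$, ensures that $\phi$ is finite on $[0,T]$. Gronwall's lemma then yields $\phi(t) \le C e^{CT}\int_0^t |\kappa(\mathsf{w}_p(\gamma^1(r),\gamma^2(r)))|^p\, dr$ for $t \le T$, and the coupling inequality concludes the proof.

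\textbf{Main obstacle.} The technically delicate step is the stochastic convolution estimate under \ref{U1}: one must apply \Cref{lem: factorization} to the difference process $\Delta\sigma_s$, which only satisfies the Lipschitz-type bound controlled by $\Psi_s$ (rather than a standalone linear-growth bound), and the singular factorization kernel is what forces the restriction $p > 1/\alpha$. Verifying that $\phi$ is a priori locally bounded before invoking Gronwall is what pins down the dependence of the constant on $m$, through the uniform $p$-moment estimate on $\widetilde X^i$ supplied by the linear-growth portion of \ref{U1} (or \ref{U2}) and the uniform bound on $\|\gamma^i(s)\|_p$.
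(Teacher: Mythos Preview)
Your proposal is correct and follows essentially the same route as the paper: reduce to a common driving system via strong existence and Yamada--Watanabe, bound the drift convolution by H\"older with the $\g^{p/(p-1)}$ integrability and the stochastic convolution by \Cref{lem: factorization} (under \ref{U1}) or Burkholder (under \ref{U2}), then close with Gronwall and the coupling inequality $\mathsf{w}_p(u^1(t),u^2(t))^p \le E[\|\widetilde X^1_t - \widetilde X^2_t\|_E^p]$. Your explicit mention of a common initial value $\xi_0$ and your justification for the local boundedness of $\phi$ via the linear-growth halves of \ref{U1}/\ref{U2} are in fact slightly more careful than the paper, which leaves both points implicit.
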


In the following we prove \Cref{theo: uni}. For contradiction, let \((\B^i, W^i, X^i), i = 1, 2,\) be two \(p\)-martingale solutions to the MKV SPDE \((A, \mu, \sigma, \eta)\) such that \(P^1 \circ (X^1)^{-1} \not = P^2 \circ (X^2)^{-1}\). We define \(u^i (t) \triangleq P^i \circ (X^i_t)^{-1}\) for \(t \in \mathbb{R}_+\) and \(i = 1, 2\). By definition of a \(p\)-martingale solution, we have \(u^i \in C(\mathbb{R}_+, M^p_w(E))\) for \(i = 1,2\).

\begin{lemma} \label{lem: s}
	\(\s \triangleq \inf (t \in \mathbb{R}_+ \colon u^1(t) \not = u^2(t)) < \infty\).
\end{lemma}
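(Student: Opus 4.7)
The plan is to argue by contradiction. Suppose $\mathfrak{s} = \infty$, which by definition amounts to $u^1(t) = u^2(t)$ for every $t \in \mathbb{R}_+$; write $\gamma \triangleq u^1 = u^2 \in C(\mathbb{R}_+, M^p_w(E))$. Then, by the very definition of a $p$-martingale solution to the MKV SPDE, both triples $(\B^i, W^i, X^i)$ are martingale solutions in the sense of \Cref{def: SPDE classical} to the \emph{classical}, non-MKV SPDE with coefficients $(A, \mu, \sigma, \gamma, \eta)$. The aim is to derive a contradiction with the standing hypothesis $P^1 \circ (X^1)^{-1} \neq P^2 \circ (X^2)^{-1}$ by showing that this classical SPDE admits at most one solution measure on $C(\mathbb{R}_+, E)$.

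Since $u^i \in C(\mathbb{R}_+, M^p_w(E))$, the map $s \mapsto \|\gamma(s)\|_p$ is continuous and hence bounded on every compact interval. Under \ref{U1} (with $p > 1/\alpha$) or \ref{U2} (with $p \geq 2$), this local boundedness turns the standing assumptions into a Lipschitz-in-$x$ condition for the fixed-$\gamma$ coefficients $(s, x) \mapsto \mu(s, x, \gamma(s))$ and $(s, x) \mapsto \sigma(s, x, \gamma(s))$, with the time-integrability required by the factorization method. I would then establish pathwise uniqueness for the classical SPDE $(A, \mu, \sigma, \gamma, \eta)$ via a Gronwall estimate on $E\|X^1_s - X^2_s\|_E^p$ applied to two solutions that share the same cylindrical Brownian motion on a single filtered probability space. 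This is essentially the computation that will underlie \Cref{lem: uni main tech}, specialized to $\gamma^1 = \gamma^2 = \gamma$ and to solutions coupled through a common driver.

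With pathwise uniqueness for the classical SPDE in hand, I would invoke the infinite-dimensional Yamada--Watanabe theorem for SPDEs driven by cylindrical Brownian motion to promote pathwise uniqueness, together with the weak existence witnessed by either $X^1$ or $X^2$, to uniqueness in law on $C(\mathbb{R}_+, E)$. Applying this to our $X^1$ and $X^2$, which a priori live on distinct filtered probability spaces $\B^1$ and $\B^2$, yields $P^1 \circ (X^1)^{-1} = P^2 \circ (X^2)^{-1}$, which contradicts the standing hypothesis of the proof of \Cref{theo: uni}. Hence $\mathfrak{s} < \infty$.

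The main obstacle is the final Yamada--Watanabe step: pathwise uniqueness is inherently a single-probability-space statement, whereas the hypotheses of the lemma concern two weak solutions supported on unrelated filtered spaces, so the passage to uniqueness in law on the path space must be argued carefully (and cannot be read off directly from the marginal estimate of \Cref{lem: uni main tech}, which applied with $\gamma^1 = \gamma^2$ only reconfirms that the one-dimensional marginals agree). Pathwise uniqueness for the fixed-$\gamma$ classical SPDE itself is routine once the Lipschitz structure in $x$ has been exposed, and the same applies to verifying that $X^1$ and $X^2$ do solve the classical SPDE once $u^1 = u^2$ is assumed.
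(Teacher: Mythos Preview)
Your proposal is correct and follows essentially the same route as the paper: assume $\mathfrak{s}=\infty$, observe that both $(\B^i,W^i,X^i)$ solve the classical SPDE $(A,\mu,\sigma,\gamma,\eta)$ with $\gamma=u^1=u^2$, deduce pathwise uniqueness from the Lipschitz-in-$x$ structure, and then invoke the Yamada--Watanabe theorem (the paper cites \cite[Theorem~2]{MR2067962}) to upgrade to uniqueness in law and reach the contradiction. The only cosmetic difference is that the paper outsources the pathwise uniqueness step to \Cref{theo: DZ} (under \ref{U1}) or \cite[Theorem~7.2]{DePrato} (under \ref{U2}) rather than reproving the Gronwall estimate in place.
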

\begin{proof}
	For contradiction, assume that \(\s = \infty\), i.e. \(u^1 = u^2 \equiv u\). Then, \((B^i, W^i, X^i)\) are both martingale solutions to the SPDE \((A, \mu, \sigma, u, \eta)\). Thanks to \Cref{theo: DZ} (under \(p > 1/\alpha\) and \ref{U1}) or \cite[Theorem 7.2]{DePrato} (under \(p \geq 2\) and \ref{U2}), this SPDE has a pathwise unique solution.
		 By the Yamada--Watanabe theorem \cite[Theorem 2]{MR2067962}, the SPDE also satisfies uniqueness in law. This contradicts \(P^1 \circ (X^1)^{-1} \not = P^2 \circ (X^2)^{-1}\) and the claim follows. 
\end{proof}
\begin{remark} 
The conclusion of \Cref{lem: s} might be compared to \cite[Theorem 4.4.2]{EK} which shows that two solutions of certain (time-homogeneous) martingale problems have the same law already if they have the same one-dimensional distributions. 
\end{remark}
\Cref{lem: uni main tech} yields that 
\begin{align*}
|\mathsf{w}_p (u^1 (s), u^2(s))|^p \leq C \int_0^s | \kappa (\mathsf{w}_p(u^1(r), u^2(r)))|^p dr, \qquad 0 \leq s \leq \s + 1,
\end{align*}
where \(\kappa\) depends on \(\s, u^1\) and \(u^2\).
Hence, recalling the properties of \(\kappa\), Bihari's lemma (\cite[Lemma~5.2.8]{roeckner15}) implies that \(u^1 = u^2\) on \([0, \s + 1]\). However, as this contradicts the definition of~\(\s\), we can conclude that the MKV SPDE satisfies \(p\)-uniqueness in law. 

Finally, let \((\B, W, X)\) and \((\B, W, Y)\) be two \(p\)-martingale solutions to the MKV SPDE \((A, \mu, \sigma, \eta)\). By the previous part of the proof, we know that \(X\) and \(Y\) have the same law. We write \(\gamma (t) \triangleq P^X_t = P^Y_t\) for \(t \in \mathbb{R}_+\). Now, \((\B, W, X)\) and \((\B, W, Y)\) both are martingale solutions to the SPDE \((A, \mu, \sigma, \gamma, \eta)\). Consequently, as this SPDE satisfies pathwise uniqueness by \Cref{theo: DZ} or \cite[Theorem 7.2]{DePrato}, we have a.s. \(X = Y\). The proof of \Cref{theo: uni} is complete. \qed
\\

\noindent
It remains to prove \Cref{lem: uni main tech}. 
\begin{proof} [Proof of \Cref{lem: uni main tech}]
	Thanks to \Cref{theo: DZ} or \cite[Theorem 7.2]{DePrato}, the SPDE can be realized on any driving system, and, thanks to the Yamada--Watanabe theorem \cite[Theorem 2]{MR2067962}, it also satisfies uniqueness in law. Consequently, we can w.l.o.g. assume that \((\B^1, W^1) = (\B^2, W^2) \equiv (\B, W)\).
	Take \(T > 0\) and let \(m > 0\) be such that \(\sup_{s \leq T} \|\gamma^i (s)\|_p \leq m\). For \(t \leq T\) we have 
	\begin{equation*} 
	\begin{split}
	E \big[  \|X^1_s - X^2_s\|^p_E \big] & \leq 2^{p - 1} E \Big[ \Big\| \int_0^s S_{s- r}( \mu(r, X^1_r, \gamma^1 (r)) - \mu (r, X^2_r, \gamma^2 (r)) ) dr \Big\|^p_E \Big]
	\\&\hspace{0.75cm} + 2^{p - 1}E \Big[ \Big\| \int_0^s S_{s - r} ( \sigma(r, X^1_r, \gamma^1 (r)) - \sigma (r, X^2_r, \gamma^2 (r)) ) d W_r \Big\|^p_E \Big].
	\end{split}
	\end{equation*}
	We now estimate each of the latter terms separately, starting with the second term. 
	In case \(p > 1/\alpha\) and \ref{U1} hold, using \Cref{lem: factorization} yields that
	we get
		\begin{equation*}
		\begin{split}
	E \Big[   \Big\| \int_0^t &S_{t - r} (\sigma (r, X^1_r, \gamma^1 (r)) - \sigma (r, X^2_r, \gamma^2 (r))) d W_r \Big\|^p_E \Big] 
	\\&\leq C E \Big[ \int_0^t \big( \|X^1_r - X^2_r\|^p_E + |\kappa (\mathsf{w}_p(\gamma^2(r), \gamma^2(r)))|^p \big) dr  \Big].
	\end{split}
	\end{equation*}
	Using Burkholder's inequality instead of \Cref{lem: factorization}, the same inequality holds in case \(p \geq 2\) and \ref{U2} hold.
Using H\"older's inequlity and \ref{U1}, we also get that
	\begin{align*}
	E \Big[  \Big\| \int_0^t &S_{t- r}( \mu(r, X^1_r, \gamma^1 (r)) - \mu (r, X^2_r, \gamma^2 (r)) ) dr \Big\|^p_E \Big] 
	\\&\leq E \Big[  \Big( \int_0^t \|S_{t- r}( \mu(r, X^1_r, \gamma^1 (r)) - \mu (r, X^2_r, \gamma^2 (r)) )\|_E dr \Big)^p \Big] 
	\\&\leq E \Big[  \Big( \int_0^t \g (t - r) \big(\|X^1_r - X^2_r\|_E + \kappa (\mathsf{w}(\gamma^1(r), \gamma^2(r)))\big) dr \Big)^p \Big] 
	\\&\leq \Big( \int_0^T \big[ \g (s) \big]^{p/(p - 1)} ds \Big)^{p - 1} E \Big[  \int_0^t \big(\|X^1_r - X^2_r\|_E + \kappa (\mathsf{w}(\gamma^1(r), \gamma^2(r)))\big)^p dr  \Big] 
	\\&\leq C E \Big[ \int_0^t \big(  \|X^1_r - X^2_r\|_E^p + |\kappa (\mathsf{w}_p(\gamma^1(r), \gamma^2(r)))|^p \big)dr \Big],
	\end{align*}
	with \(\g = \g_{T, m}\) as in \ref{U1}.
	A similar computation gives the inequality 
		\begin{align*}
	E \Big[  \Big\| \int_0^t &S_{t- r}( \mu(r, X^1_r, \gamma^1 (r)) - \mu (r, X^2_r, \gamma^2 (r)) ) dr \Big\|^p_E \Big] 
	\\&\leq C E \Big[ \int_0^t \big(  \|X^1_r - X^2_r\|_E^p + |\kappa (\mathsf{w}_p(\gamma^1(r), \gamma^2(r)))|^p \big)dr \Big],
	\end{align*}
	under \ref{U2}.
	Putting these estimates together, we obtain for all \(t \leq T\)
	\[
		E \big[ \|X^1_t - X^2_t\|^p_E \big] \leq C \Big( \int_0^t E \big[ \|X^1_s - X^2_s\|^p_E \big] ds + \int_0^t |\kappa (\mathsf{w}_p(\gamma^1(r), \gamma^2(r)))|^p dr \Big).
	\] 
	As \(t \mapsto E [ \|X^1_t - X^2_t\|^p_E]\) is locally bounded, we deduce from Gronwall's lemma that
	\begin{align*}
	E \big[ \|X^1_t - X^2_t\|^p_E \big] &\leq C e^{C T} \int_0^t |\kappa (\mathsf{w}_p(\gamma^1(r), \gamma^2(r)))|^p dr
	\end{align*}
	for all \(t \leq T\). 
	Finally, the claim follows from the observation that
	\[
	|\mathsf{w}_p (u^1(t), u^2(t))|^p \leq E \big[ \|X^1_t - X^2_t\|^p_E \big], \quad t \leq T.
	\]
	The proof is complete.
\end{proof}

\section{Proof of \Cref{theo: POC new1}} \label{sec: pf POC new 1}
Throughout the proof we fix a finite time horizon \(T > 0\). Except in the final step, all processes in the following are meant to be defined on the finite time interval \([0, T]\).
\\

\noindent
\emph{Step 1: Tightness in \(M_c(M_c(C([0, T], E)))\).}  We adapt the argument from Step 3 in the proof of \Cref{theo: main}.
In the first part of this step we establish a uniform moment bound.
Recall the notation that for \(x = (x_1, \dots, x_N) \in E^{\otimes N}\) we have
\[
L^N (x) = \frac{1}{N} \sum_{i = 1}^N \delta_{x_i}.
\]
By virtue of \ref{A5}, for all \(0 < s, t \leq T, x = (x_1, \dots, x_N) \in E^{\otimes N}\) and \(i = 1, \dots, N\) we have 
\begin{equation} \label{eq: linear growth POC} \begin{split}
\|\mu (t, x_i, L^N (x))\|_E^{p'}  &\leq C \Big(1 + \|x_i\|^{p'}_E + \|L^N (x)\|^{p'}_{p'}\Big) 
\\&= C \Big(1 + \|x_i\|^{p'}_E + \frac{1}{N} \sum_{j= 1}^N \|x_j\|_E^{p'} \Big).
\end{split}
\end{equation}
Let 
\[
T_m \triangleq \inf \Big(t \leq T \colon \frac{1}{N}\sum_{i = 1}^N \|X^{N, i}_t\|_E^{p'} \geq m\Big), \quad m > 0.
\]
By virtue of \ref{A5} and \Cref{lem: factorization}, we obtain that 
\begin{equation} \label{eq: bound convolution empirical measure}
\begin{split}
E \Big[ \sup_{s \leq t \wedge T_m} \Big\| \int_0^s &S_{s- r} \sigma (r, X^{N, i}_r, \mathcal{X}^N_r) dW^i_r \Big\|^{p'}_E \Big] 
\\&\leq E \Big[ \sup_{s \leq t} \Big\| \int_0^s S_{s- r} \sigma (r, X^{N, i}_{r \wedge T_m}, L^N(X^{N, 1}_{r \wedge T_m}, \dots, X^{N, N}_{r \wedge T_m})) dW^i_r \Big\|^{p'}_E \Big]
\\&\leq C E \Big[ \int_0^{t} \Big(1 + \|X^{N, i}_{s \wedge T_m}\|^{p'}_E + \frac{1}{N} \sum_{j = 1}^N \|X^{N, j}_{s \wedge T_m}\|^{p'}_E \Big) ds \Big].
\end{split}
\end{equation}
Now, thanks to \eqref{eq: linear growth POC}, \eqref{eq: bound convolution empirical measure} and the uniform moment bound on the initial values from \ref{I}, for all~\(t \leq T\) we obtain
\begin{align*}
E \Big[ \sup_{s \leq t \wedge T_m} \frac{1}{N} \sum_{i = 1}^N \|X^{N, i}_s\|^{p'}_E \Big]
&\leq \frac{1}{N} \sum_{i = 1}^N E \Big[ \sup_{s \leq t \wedge T_m} \|X^{N, i}_s\|^{p'}_E \Big] 
\\&\leq \frac{1}{N} \sum_{i = 1}^N C \Big( 1 + E \Big[ \int_0^{t} \Big(\|X^{N, i}_{s \wedge T_m}\|^{p'}_E + \frac{1}{N} \sum_{j = 1}^N \|X^{N, j}_{s \wedge T_m}\|^{p'}_E \Big) ds \Big] \Big)
\\
&= C \Big(1 + 2 E \Big[ \int_0^{t} \frac{1}{N} \sum_{i = 1}^N \|X^{N, i}_{s \wedge T_m}\|^{p'}_E ds \Big] \Big)
\\&\leq C \Big(1 + \int_0^t E \Big[ \sup_{r \leq s \wedge T_m} \frac{1}{N} \sum_{i = 1}^N \|X^{N, i}_r\|^{p'}_E \Big] ds \Big).
\end{align*}
As, by definition of \(T_m\), for all \(t \leq T\)
\begin{align*}
E \Big[\sup_{s \leq t \wedge T_m} \frac{1}{N} \sum_{i = 1}^N \|X^{N, i}_s\|^{p'}_E \Big] &\leq \frac{1}{N} \sum_{i = 1}^N E \big[ \|X^{N, i}_0\|^{p'}_E \big] + m
\\&\leq \sup_{n \in \mathbb{N}} \int \|\X^N_1 (x)\|^{p'}_E \eta^n( dx ) + m, 
\end{align*}
we deduce from Gronwall's lemma that 
\[
E \Big[\sup_{s \leq T \wedge T_m} \frac{1}{N} \sum_{i = 1}^N \|X^{N, i}_s\|^{p'}_E \Big]  \leq C.
\]
Hence, letting \(m \to \infty\), Fatou's lemma yields that 
\[
E \Big[ \sup_{s \leq T} \|X^{N, 1}_s\|^{p'}_E\Big] \leq E \Big[\sup_{s \leq T} \sum_{i = 1}^N \|X^{N, i}_s\|^{p'}_E \Big]  \leq CN.
\]
Next, using that \(X^{N, i}\) and \(X^{N, j}\) have the same law for all \(i, j \leq N\) by assumption \ref{E}, and the uniform moment bound on the initial values from \ref{I}, arguing as above, we get for all \(t \leq T\) that
\begin{align*}
E \Big[ \sup_{s \leq t} \|X^{N, 1}_s\|^{p'}_E \Big] 
&\leq C \Big(1 + \int_0^t E \Big[ \sup_{r \leq s}\|X^{N, 1}_r\|^{p'}_E \Big] ds \Big).
\end{align*}
Thus, as \(E \big[ \sup_{s \leq T} \|X^{N, 1}_s\|^{p'}_E\big] < \infty\), we can apply Gronwall's lemma again and obtain that
\begin{align}\label{eq: moment bound prop chaos pf}
\sup_{N \in \mathbb{N}}  E \Big[ \sup_{s \leq T}\|X^{N, 1}_s\|^{p'}_E \Big] \leq C.
\end{align}

We are in the position to deduce tightness. Fix \(\varepsilon > 0\).
As the empirical distributions \(\mathcal{X}^N_0\) converge weakly as \(N \to \infty\) by assumption \ref{I}, \cite[Proposition I.2.2]{10.1007/BFb0085169} yields that the family \(\{X^{N, 1}_0 \colon N \in \mathbb{N}\}\) is tight. Thus, there exists a compact set \(K \subset E\) such that 
\begin{align}\label{eq: eps tight}
\sup_{N\in\mathbb{N}} P \big(X^{N, 1}_0 \not \in K\big) \leq \frac{\varepsilon}{2}.
\end{align}
Recalling the notation from Step 0 in the proof of \Cref{theo: main}, the factorization formula (see again Step 0 in the proof of \Cref{theo: main}) yields that
\begin{align*}
X^{N, 1} = S_\cdot X_0 + R_1 \mu^{N} + \tfrac{\sin (\pi \alpha)}{\pi} R_\alpha Y^{N}, 
\end{align*}
where \(\mu^{N} \equiv (s \mapsto \mu(s, X^{N, 1}_s, \mathcal{X}^N_s))\) and 
\[
Y^{N}_t \triangleq \int_0^t (t - s)^{- \alpha} S_{t-s} \sigma(s, X^{N, 1}, \mathcal{X}^{N}_s) d W^1_s, \quad t \leq T.
\]
Now, we define 
\begin{align*}
K_R \triangleq \Big\{ \omega \in\ &C ([0, T], E)\colon \omega = Sx_0 + R_1 \psi + \tfrac{\sin (\pi \alpha)}{\pi}R_\alpha \phi,\ \\& x_0 \in K, \phi, \psi \in L^{p'} ([0, T], E)  \text{ with } \int_0^T \|\psi (s)\|_E^{p'} ds \leq R, \ \int_0^T \|\phi (s)\|_E^{p'} ds \leq R \Big\}.
\end{align*}
Thanks to \Cref{lem: R compact} and the compactness of the semigroup \(S\), the set \(K_R\) is relatively compact in \(C([0, T], E)\). 
Using \eqref{eq: fak bound 2}, \ref{A5} and the assumption that \(X^{N, i}\) and \(X^{N, j}\) have the same law for all \(i, j \leq N\), we estimate
\begin{equation} \label{eq: eps tight 2} \begin{split}
E \Big[ \int_0^T \|Y^{N}_s\|^{p'}_E ds \Big] 
&\leq C \Big(1 + \sup_{N \in \mathbb{N}}\sup_{s \leq T} E \big[ \|X^{N, 1}_s\|^{p'}_E\big] \Big).
\end{split}
\end{equation}
Similarly, thanks to \eqref{eq: linear growth POC}, we obtain that 
\begin{align} \label{eq: eps tight 3}
E \Big[ \int_0^T \|\mu (s, X^{N, 1}_s, \mathcal{X}^N_s) \|^{p'}_E ds \Big] \leq C \Big(1 + \sup_{N \in \mathbb{N}}\sup_{s \leq T} E \big[ \|X^{N, 1}_s\|^{p'}_E\big] \Big).
\end{align}
In summary, using Chebyshev's inequality and \eqref{eq: eps tight}, \eqref{eq: eps tight 2} and \eqref{eq: eps tight 3}, for every \(\varepsilon> 0\) we can take \(R = R(\varepsilon) > 0\) large enough such that 
\[
P \big(X^{N, 1} \in K_R\big) \geq 1 - \varepsilon.
\]
Consequently, the family \(\{X^{N, 1} \colon N \in \mathbb{N}\}\) is tight.\\ 

\noindent
\emph{Step 2: Tightness in \(M_c(M^{p^\circ}_w(C([0, T], E)))\).}
Let \(d_T\) be the uniform metric on \(C([0, T], E)\),~i.e. 
\[
d_T (\omega, \alpha) \triangleq \sup_{s \leq T} \|\omega(s) - \alpha(s)\|_E, \quad \omega, \alpha \in C([0, T], E).
\]
In the following we consider \(\mathcal{X}^1, \mathcal{X}^2, \dots\) as random variables with values in \(M_w^{p^\circ}(C([0, T], E))\). 
Next, we show that \(\{\mathcal{X}^N \colon N \in \mathbb{N}\}\) is tight. 

Fix \(\varepsilon > 0\) and define
\[
a_n \triangleq n^{1/(p' - p^\circ)} 2^{n/(p' - p^\circ)},\qquad b_n \triangleq \varepsilon n\Big/ \Big[\sup_{m \in \mathbb{N}} E \Big[ \sup_{s \leq T} \|X^{m, 1}_s\|^{p'}_E \Big] \vee 1 \Big],
\]
and
\begin{align} \label{eq: K eps}
K_\varepsilon \triangleq \bigcap_{n \in \mathbb{N}}\Big\{ \nu \in M^{p^\circ}_w (C([0, T], E)) \colon \int d_T (\omega, 0)^{p^\circ} \1_{d_T (\omega, 0) \geq a_n}\nu(d \omega) < \frac{1}{b_n} \Big\}.
\end{align}
For every \(N \in \mathbb{N}\) we obtain
\begin{align*}
P \big(\mathcal{X}^N \not \in K_\varepsilon\big) &\leq \sum_{n = 1}^\infty P \Big( \frac{1}{N} \sum_{i = 1}^N d_T (X^{N, i}, 0)^{p^\circ} \1_{d_T (X^{N, i}, 0) \geq a_n} \geq \frac{1}{b_n}\Big)
\\&\leq \sum_{n = 1}^\infty \frac{b_n}{N} \sum_{i = 1}^N E \big[d_T (X^{N, i}, 0)^{p^\circ} \1_{d_T (X^{N, i}, 0) \geq a_n}\big]
\\&\leq \sum_{n = 1}^\infty \frac{b_n}{a_n^{p' - p^\circ}} E \Big[ \sup_{s \leq T} \|X^{N, 1}_s\|^{p'}_E \Big]
\\&\leq \varepsilon.
\end{align*}

By \cite[Proposition I.2.2]{10.1007/BFb0085169}, Step 1 implies that the family \(\{\mathcal{X}^N \colon N \in \mathbb{N}\}\), seen as random variables in \(M_c (C([0, T], E))\), is tight. 
Consequently, as \(\{\mathcal{X}^N \colon N \in \mathbb{N}\} \subset M_w^{p^\circ} (C([0, T], E))\), there exists a set \(G_\varepsilon \subset M^{p^\circ}_w(C([0, T], E))\) which is relatively compact in \(M_c(C([0, T], E))\) such that 
\[
\sup_{N \in \mathbb{N}}P \big(\mathcal{X}^N \not \in G_\varepsilon\big) \leq \varepsilon.
\]
Let \(K_\varepsilon\) be as in \eqref{eq: K eps}. Then, we have for all \(N \in \mathbb{N}\)
\[
P\big(\mathcal{X}^N \not \in G_\varepsilon \cap K_\varepsilon\big) \leq P\big(\mathcal{X}^N \not \in G_\varepsilon\big) + P\big(\mathcal{X}^N \not \in K_\varepsilon\big) \leq 2 \varepsilon.
\]
Using that the set \(G_\varepsilon \cap K_\varepsilon\) is relatively compact in \(M_w^{p^\circ}(C([0, T], E))\) by \cite[Corollary 5.6]{Carmona18}, we can conclude that the family \(\{\mathcal{X}^N \colon N \in \mathbb{N}\}\) is tight when seen as random variables with values in \(M_w^{p^\circ}(C([0, T], E))\).
From now on we assume that \(\mathcal{X}^N \to \mathcal{X}\) in \(M_c(M_w^{p^\circ}(C([0, T], E)))\), i.e  that the laws of \(\mathcal{X}^N\), which are considered as elements of \(M_c(M^{p^\circ}_w(C([0, T], E)))\), converge weakly to the law of \(\mathcal{X} \in M^{p^\circ}_w(C([0, T], E))\).
We note that 
\begin{equation} \label{eq: moment ineq final} \begin{split}
E \Big[ \int d_T (\omega, 0)^{p'} \mathcal{X}(d \omega) \Big] &\leq \liminf_{N \to \infty} E \Big[ \int d_T (\omega, 0)^{p'} \mathcal{X}^N(d \omega) \Big] 
\\&= \liminf_{N \to \infty} \frac{1}{N} \sum_{i = 1}^N E \Big[ \sup_{s \leq T} \|X^{N, i}_s\|^{p'}_E \Big] 
\\&= \liminf_{N \to \infty} E\Big[ \sup_{s \leq T} \|X^{N, 1}_s\|^{p'}_E\Big]
\\&\leq \sup_{N \in \mathbb{N}} E \Big[ \sup_{s \leq T} \|X^{N, 1}_s\|^{p'}_E\Big] < \infty.
\end{split}
\end{equation}
Thus, a.s. \(\mathcal{X} \in M^{p'}_w (C([0, T], E)) \subset M_w^{p^\circ} (C([0, T], E))\).
\\

\noindent
\emph{Step 3: Convergence of Test Processes.}
Take \(0 \leq s < t \leq T, t_1, \dots, t_m \in [0, s], h_1, \dots, h_m \in C_b(E), g \in C^2_c(\mathbb{R})\) and \(y^* \in D(A^*)\). 
For \((\omega, \nu) \in C([0, T], E) \times M^{p^\circ}_w(C([0, T], E))\) we define
\begin{align*}
M_r(\omega, \nu) &\triangleq g(\langle \omega (r), y^*\rangle_E) - g(\langle \omega(0), y^*\rangle_E) - \int_0^r \mathcal{L}_u (\omega , \nu) du , \quad r \leq T,
\end{align*}
where
\begin{align*}
\mathcal{L}_u (\omega , \nu) \triangleq \big( \langle \omega(u), A^* y^* \rangle_E &+ \langle \mu (u, \omega(u), \nu \circ \X_u^{-1}), y^* \rangle_E \big) g' (\langle \omega(u), y^* \rangle_E)\\&+ \tfrac{1}{2} \|\sigma^* (u, \omega(u), \nu \circ \X^{-1}_u) y^*\|^2_H g'' (\langle \omega(u), y^*\rangle_E),
\end{align*}
and
\[
V (\omega, \nu) \triangleq \big(M_t(\omega, \nu) - M_s(\omega, \nu)\big) \prod_{i = 1}^m h_i (\omega(t_i)).
\]
For all  \(\nu \in M_w^{p^\circ} (C([0, T], E))\) and \(k > 0\) we define
\[
Z_k (\nu) \triangleq \int \big[(-k) \vee V(\omega, \nu) \wedge k \big] \nu(d\omega), \quad Z (\nu) \triangleq \liminf_{k \to \infty} Z_k (\nu).
\]
If \(\nu_n \to \nu\) in \(M_w^{p^\circ}(C([0, T], E))\), then \((r \mapsto \nu_n \circ \X_r^{-1}) \to (r \mapsto \nu \circ \X_r^{-1})\) in \(C([0, T], M^{p^\circ}_w(E))\), which follows from the inequality
\[
\sup_{r \leq T} \mathsf{w}_{p^\circ} (\nu_n \circ \X_r^{-1}, \nu \circ \X_r^{-1}) \leq \mathsf{w}^{p^\circ}_T (\nu_n, \nu),
\]
where \(\mathsf{w}^{p^\circ}_T\) is the \(p^\circ\)-Wasserstein metric on \(M_w^{p^\circ}(C([0, T], E))\).
By the continuity assumption \ref{C} and the dominated convergence theorem (which is applicable due to the local boundedeness part in \ref{C}), \(V\) is continuous. Thus, \cite[Theorem 8.10.61]{Bogachev07} yields that \(Z_k\) is continuous. In particular, \(Z\) is Borel measurable.

For all \((\omega, \nu) \in C([0, T], E) \times M^{p'}_w(C([0, T], E))\) we have
\begin{align}\label{eq: V bound}
\big| V(\omega, \nu) \big|^{p'/2} &\leq C \Big(1 + \int_s^t \big(\|\omega(r)\|_E^{p'} + \|\nu \circ \X_r^{-1}\|_{p'}^{p'} \big) dr\Big),
\end{align}
where we use \ref{A5}.
Since a.s. \(\mathcal{X}, \mathcal{X}^N \in M^{p'}_w (C([0, T], E))\), we have a.s.
\begin{align*}
Z (\mathcal{X}^N) &= \int V(\omega, \mathcal{X}^N) \mathcal{X}^N (d\omega) = \frac{1}{N} \sum_{i = 1}^N V(X^{N, i}, \mathcal{X}^N), \\ 
Z (\mathcal{X}) &= \int V(\omega, \mathcal{X}) \mathcal{X} (d\omega).
\end{align*}
Our next aim is to show that a.s. \(Z(\mathcal{X}) = 0\). Together with a monotone class argument, this implies that a.a. realizations of \(\mathcal{X}\) are \(p'\)-solution measures to the MKV SPDE \((A, \mu, \sigma, \eta)\).

\begin{lemma} \label{lem: K1}
	\(E\big[ |Z(\mathcal{X}^N)| \big] \to E\big[ |Z(\mathcal{X})| \big]\) as \(N \to \infty\).
\end{lemma}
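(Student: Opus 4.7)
The plan is a truncation argument. Because $V(\cdot,\nu)$ is unbounded in $\omega$, the continuous mapping theorem cannot be applied to $Z$ directly; instead I would pass to the limit $N\to\infty$ for each fixed truncation level $k$, where $Z_k$ is bounded and continuous, and only afterwards send $k\to\infty$, using a uniform-in-$N$ control of the truncation error. The main obstacle is precisely this last uniform control, which relies crucially on the exponent $p'/2>1$ appearing in the estimate \eqref{eq: V bound}.

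For the preparation, note that for every $\nu\in M^{p'}_w(C([0,T],E))$ the bound \eqref{eq: V bound} together with $p'>1/\alpha>2$ yields $V(\cdot,\nu)\in L^{p'/2}(\nu)\subset L^1(\nu)$. Hence the $\liminf$ defining $Z(\nu)$ is in fact a dominated-convergence limit, $Z(\nu)=\int V(\omega,\nu)\,\nu(d\omega)$, and
\[
|Z(\nu)-Z_k(\nu)|\le \int |V(\omega,\nu)|\,\1_{\{|V(\omega,\nu)|>k\}}\,\nu(d\omega).
\]
Since a.s. both $\mathcal{X}^N$ and $\mathcal{X}$ lie in $M^{p'}_w(C([0,T],E))$ by \eqref{eq: moment bound prop chaos pf} and \eqref{eq: moment ineq final} respectively, this representation is available in both cases.

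The uniform truncation estimate I would establish next: since $p'/2>1$, Markov's inequality gives
\[
|Z(\mathcal{X}^N)-Z_k(\mathcal{X}^N)|\le k^{1-p'/2}\int|V(\omega,\mathcal{X}^N)|^{p'/2}\,\mathcal{X}^N(d\omega),
\]
and applying \eqref{eq: V bound} together with Fubini (using that $\int\|\nu\circ\X^{-1}_r\|^{p'}_{p'}dr=\int\!\int\|\omega(r)\|^{p'}_E dr\,\nu(d\omega)$ for any probability $\nu$ on $C([0,T],E)$) bounds the inner integral by $C(1+\int d_T(\omega,0)^{p'}\mathcal{X}^N(d\omega))$. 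Taking expectations and using exchangeability of $X^{N,1},\dots,X^{N,N}$ together with \eqref{eq: moment bound prop chaos pf} gives $E[\int d_T(\omega,0)^{p'}\mathcal{X}^N(d\omega)]=E[\sup_{s\le T}\|X^{N,1}_s\|^{p'}_E]\le C$ uniformly in $N$. The identical computation for $\mathcal{X}$, invoking \eqref{eq: moment ineq final} instead of \eqref{eq: moment bound prop chaos pf}, yields $E[|Z(\mathcal{X})-Z_k(\mathcal{X})|]\to 0$ as $k\to\infty$.

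Finally, for every fixed $k$ the functional $Z_k$ is continuous on $M^{p^\circ}_w(C([0,T],E))$ (already recorded after its definition) and uniformly bounded by $k$, so the convergence $\mathcal{X}^N\to\mathcal{X}$ in $M_c(M^{p^\circ}_w(C([0,T],E)))$ from Step~2 together with the continuous mapping theorem and bounded convergence yields $E[|Z_k(\mathcal{X}^N)|]\to E[|Z_k(\mathcal{X})|]$ for each fixed $k$. A three-$\varepsilon$ argument—first fix $k$ large so that both truncation errors are below $\varepsilon/3$ uniformly in $N$, then let $N\to\infty$ so the $Z_k$-term is below $\varepsilon/3$—then completes the proof of $E[|Z(\mathcal{X}^N)|]\to E[|Z(\mathcal{X})|]$.
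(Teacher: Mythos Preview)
Your proposal is correct and follows essentially the same approach as the paper's proof: a three-term triangle inequality splitting into the truncation error for $\mathcal{X}^N$, the bounded-continuous term for $Z_k$, and the truncation error for $\mathcal{X}$, with the uniform-in-$N$ control coming from Markov's inequality with exponent $p'/2>1$ combined with \eqref{eq: V bound} and \eqref{eq: moment bound prop chaos pf}. The only cosmetic difference is that the paper writes the truncation bound directly in terms of the empirical average $\frac{1}{N}\sum_i E[|V(X^{N,i},\mathcal{X}^N)|^{p'/2}]$ rather than first bounding pointwise and then taking expectation, but the content is identical.
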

\begin{proof}
The triangle inequality yields that
	\begin{equation} \label{eq: triangleq}
	\begin{split}
	\big|E\big[ |Z(\mathcal{X}^N)| \big] - E\big[ |Z(\mathcal{X})| \big]\big| \leq \big| E\big[ &|Z(\mathcal{X}^N)| \big] - E\big[ |Z_k(\mathcal{X}^N)| \big]\big|
	\\&+ \big| E\big[ |Z_k(\mathcal{X}^N)| \big] - E\big[ |Z_k(\mathcal{X})| \big]\big|
	\\&+ \big| E\big[ |Z_k(\mathcal{X})| \big] - E\big[ |Z(\mathcal{X})| \big]\big|.
	\end{split}
	\end{equation}
	Using \eqref{eq: V bound}, we estimate
	\begin{align*}
	E \big[\big | Z (\mathcal{X}^N) - Z_k (\mathcal{X}^N)\big| \big] &\leq \frac{1}{N}\sum_{i = 1}^N E \big[ \big| V(X^{N, i}, \mathcal{X}^N) - \big[(-k) \vee V(X^{N, i}, \mathcal{X}^N) \wedge k \big] \big| \big]
	\\&\leq \frac{1}{N}\sum_{i = 1}^N E \big[ \big| V(X^{N, i}, \mathcal{X}^N) \big| \1_{|V(X^{N, i}, \mathcal{X}^N)| > k}\big]
	\\&\leq \frac{1}{k^{p'/2 - 1}} \frac{1}{N}\sum_{i = 1}^N E \big[ \big| V(X^{N, i}, \mathcal{X}^N) \big|^{p'/2}\big]
	\\&\leq \frac{C}{k^{p'/2 - 1}} \Big(1 + \sup_{n \in \mathbb{N}}\sup_{r \leq t} E \big[ \|X^{n, 1}_r\|^{p'}_E \big]\Big), 
	\end{align*}
	where the constant \(C\) is independent of \(k\) and \(N\). By virtue of \eqref{eq: moment bound prop chaos pf}, this bound shows that the first term on the r.h.s. of \eqref{eq: triangleq} converges to zero as \(k \to \infty\) uniformly in \(N\). A similar computation shows the same claim for the final term. 
	Finally, the second term on the r.h.s. of \eqref{eq: triangleq} convergences to zero as \(N \to \infty\) because \(Z_k \in C_b (M^{p^\circ}_w(C([0, T], E)))\).
	The proof is complete.
\end{proof}
\begin{lemma} \label{lem: K2}
	\(E\big[\big(Z(\mathcal{X}^N)\big)^2\big] \to 0\) as \(N \to \infty\).
\end{lemma}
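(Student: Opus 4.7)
My plan is to expand the square of the empirical average and exploit the continuous martingale structure of $r \mapsto M_r(X^{N,i}, \mathcal{X}^N)$ along each particle trajectory, so that cross terms between distinct particles vanish by independence of the driving noises $W^i$, while diagonal terms are uniformly bounded; dividing by $N$ then yields the claim at rate $1/N$.

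By the same identity used in the preceding lemma we have almost surely $Z(\mathcal{X}^N) = \frac{1}{N}\sum_{i=1}^N A_i B_i$, where $A_i \triangleq \prod_{k=1}^m h_k(X^{N,i}(t_k))$ is bounded by $\prod_k\|h_k\|_\infty$ and measurable with respect to the time-$s$ $\sigma$-algebra $\cF_s$ on $\B^N$, while $B_i \triangleq M_t(X^{N,i}, \mathcal{X}^N) - M_s(X^{N,i}, \mathcal{X}^N)$. By \ref{E} each $X^{N,i}$ is a mild solution to its SPDE, hence also an analytically weak solution by \cite[Theorem 13]{MR2067962}, and It\^o's formula applied to $g(\langle X^{N,i}_\cdot, y^*\rangle_E)$ gives, exactly as in Step~4 of the proof of \Cref{theo: main},
\[
M_r(X^{N,i}, \mathcal{X}^N) = \int_0^r g'(\langle X^{N,i}_u, y^*\rangle_E)\,\langle \sigma^*(u, X^{N,i}_u, \mathcal{X}^N_u) y^*, d W^i_u\rangle_H.
\]
Since $g'$ is bounded and $\|\sigma^* y^*\|_H \le \|\sigma\|_{L(H,E)}\|y^*\|_E$, the linear growth from \ref{A5} combined with the uniform moment bound \eqref{eq: moment bound prop chaos pf} makes $M(X^{N,i}, \mathcal{X}^N)$ a true square-integrable continuous martingale on $\B^N$.

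Expanding yields $E[Z(\mathcal{X}^N)^2] = N^{-2}\sum_{i,j} E[A_i A_j B_i B_j]$. For $i\neq j$, independence of $W^i$ and $W^j$ makes the two It\^o integrals $M(X^{N,i}, \mathcal{X}^N)$ and $M(X^{N,j}, \mathcal{X}^N)$ orthogonal $L^2$-martingales, so $E[B_iB_j\mid\cF_s]=0$; as $A_iA_j$ is bounded and $\cF_s$-measurable, every off-diagonal term vanishes. For the diagonal, It\^o's isometry together with \ref{A5}, exchangeability of the particles, and Jensen's inequality (using $p' > 1/\alpha > 2$, so that $2/p' < 1$, to bound $E[\|\mathcal{X}^N_u\|_{p'}^2]$ by $(E[\|X^{N,1}_u\|^{p'}_E])^{2/p'}$) give
\[
E[B_i^2] \leq C \int_s^t \bigl(1 + E[\|X^{N,1}_u\|_E^{p'}]\bigr)^{2/p'}\, du,
\]
which by \eqref{eq: moment bound prop chaos pf} is bounded by a constant independent of $i$ and $N$. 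Consequently $E[Z(\mathcal{X}^N)^2] \leq C/N \to 0$.

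The one genuinely non-routine point, and the step I expect to require the most care, is the identification of $M(X^{N,i}, \mathcal{X}^N)$ as a stochastic integral driven by $W^i$ alone; this is precisely what makes martingales with distinct indices orthogonal despite their common dependence on the empirical measure $\mathcal{X}^N$. Once this identification is in place, the $1/N$ rate is an immediate consequence of exchangeability, and everything else reduces to the moment estimates already available from Step~1.
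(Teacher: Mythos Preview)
Your proposal is correct and follows essentially the same approach as the paper: expand the square, identify $M(X^{N,i},\mathcal{X}^N)$ as a stochastic integral with respect to $W^i$ via It\^o's formula, use that $[M^i,M^j]=0$ for $i\neq j$ (so the product is a martingale and the cross terms vanish), and bound the diagonal terms uniformly to obtain the $1/N$ rate. The only cosmetic difference is that you package the off-diagonal cancellation as $E[B_iB_j\mid\cF_s]=0$ with $A_iA_j$ being $\cF_s$-measurable, whereas the paper writes out the four-term expansion of $E[(M^i_t-M^i_s)(M^j_t-M^j_s)\,h_ih_j]$ explicitly; both are the same computation.
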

\begin{proof}
We compute
\begin{align*}
E \big[ \big(Z (\mathcal{X}^N)\big)^2 \big] 
&=  E \Big[ \Big(\int V(\omega, \mathcal{X}^N) \mathcal{X}^N(d\omega)\Big)^2\Big] \\&= \frac{1}{N^2} \sum_{i, j = 1}^N E \big[ V(X^{N, i}, \mathcal{X}^N)V(X^{N, j}, \mathcal{X}^N)\big].
\end{align*}
Passing to the analytically weak formulation of \(X^{N, i}\) and using It\^o's formula, we get that 
\[
M (X^{N, i}, \mathcal{X}^N) = \int_0^\cdot g'(\langle X^{N, i}_s, y^*\rangle_E ) d \Big(\int_0^s \langle \sigma^*(t, X^{N, i}_t, \mathcal{X}^N_t) y^*, d W^i_t\rangle_H\Big),
\]
see Step 4 in the proof of \Cref{theo: main} for more details. 
We obtain
\begin{align*}
[M (X^{N, i}, \mathcal{X}^N), M (X^{N, i}, \mathcal{X}^N)] 
&= \int_0^\cdot \big( g'(\langle X^{N, i}_s, y^*\rangle_E ) \big)^2 \| \sigma^*( s, X^{N, i}_s, \mathcal{X}^N_s) y^*\|^2_Hds 
\\&\leq \|g'\|_\infty^2 \|y^*\|_E^2 \int_0^\cdot \|\sigma (s, X^{N, i}_s, \mathcal{X}^N_s)\|^2_{L(H, E)} ds,
\end{align*}
where \([\cdot, \cdot]\) denotes the quadratic variation process.
Taking expectation and using \ref{A5}, we further get
\[
E \big[ [M (X^{N, i}, \mathcal{X}^N), M (X^{N, i}, \mathcal{X}^N)]_T \big] \leq C \Big(1 + E \Big[ \sup_{r \leq T} \|X^{N, 1}_r\|^{p'}\Big]^{2/p'}\Big) < \infty.
\]
Consequently, \(M (X^{N, i}, \mathcal{X}^N)\) is a square-integrable martingale. 
Therefore, we obtain that 
\begin{equation} \label{eq: mp M comp} \begin{split}
E\Big[ M_t (X^{N, i}&, \mathcal{X}^N) M_s (X^{N, j}, \mathcal{X}^N) \prod_{k = 1}^N h_k (X^{N, i}_{t_k}) h_k (X^{N, j}_{t_k}) \Big] \\&= E\Big[ M_s (X^{N, i}, \mathcal{X}^N) M_s (X^{N, j}, \mathcal{X}^N) \prod_{k = 1}^N h_k (X^{N, i}_{t_k}) h_k (X^{N, j}_{t_k}) \Big].
\end{split}
\end{equation}
Furthermore, for \(i \not = j\), as \(W^i\) and \(W^j\) are independent, \cite[Proposition A.3]{criens21} yields that
\[
\Big[ \int_0^\cdot \langle \sigma^*(t, X^{N, i}_t, \mathcal{X}^N_t) y^*, d W^i_t\rangle_H, \int_0^\cdot \langle \sigma^*(t, X^{N, j}_t, \mathcal{X}^N_t) y^*, d W^j_t\rangle_H\Big] = 0,
\]
which implies that
\begin{align*}
[M (X^{N, i}, &\mathcal{X}^N), M (X^{N, j}, \mathcal{X}^N)] = 0.
\end{align*}
Consequently, the product \(M (X^{N, i}, \mathcal{X}^{N}) M (X^{N, j}, \mathcal{X}^N)\) is a martingale  for \(i \not = j\) (see \cite[Proposition I.4.50]{JS}). 
We therefore obtain for \(i \not = j\) that
\begin{align*}
E\Big[ M_t (X^{N, i}&, \mathcal{X}^N) M_t (X^{N, j}, \mathcal{X}^N) \prod_{k = 1}^N h_k (X^{N, i}_{t_k}) h_k (X^{N, j}_{t_k}) \Big] \\&= E\Big[ M_s (X^{N, i}, \mathcal{X}^N) M_s (X^{N, j}, \mathcal{X}^N) \prod_{k = 1}^N h_k (X^{N, i}_{t_k}) h_k (X^{N, j}_{t_k}) \Big].
\end{align*}
Together with \eqref{eq: mp M comp}, we deduce that
\begin{align*}
E \big[V(X^{N, i}, \mathcal{X}^N) V(X^{N, j}, \mathcal{X}^N)\big] = 0, \qquad i \not = j.
\end{align*}
In summary, using \eqref{eq: V bound}, we obtain 
\begin{align*}
\frac{1}{N^2} \sum_{i, j = 1}^N E \big[ V(X^{N, i}, \mathcal{X}^N)V(X^{N, j}, \mathcal{X}^N)\big] &= \frac{1}{N^2}\sum_{i = 1}^N E \big[ \big(V(X^{N, i}, \mathcal{X}^N)\big)^2 \big]
\\&\leq  \frac{C }{N}\Big(1 + \sup_{n \in \mathbb{N}}E \Big[ \sup_{r \leq t} \|X^{n, 1}_r\|^{p'}\Big]^{2/p'}\Big).
\end{align*}
As the final term converges to zero as \(N \to \infty\), the claim of the lemma follows.
\end{proof}
Combing \Cref{lem: K1,lem: K2}, we obtain that 
\[
E \big[ \big| Z(\mathcal{X})\big| \big] = \lim_{N\to \infty} E \big[ \big| Z(\mathcal{X}^N)\big| \big] \leq \lim_{N \to \infty} E \big[ \big( Z(\mathcal{X}^N)\big)^2 \big]^{1/2} = 0,
\]
which implies that a.s. \(Z(\mathcal{X}) = 0\).
\\

\noindent
\emph{Step 4: Identifying the Limit.} We are now in the position to identify the limit \(\mathcal{X}\).  
However, to use our assumption \ref{UL} we first have to adjust our setting to the infinite time horizon. By Step~1 and the \cite[Theorem 23.4]{Kallenberg}, the family \(\{\mathcal{X}^N \colon N \in \mathbb{N}\}\) is tight when considered as random variables in \(M_c(C(\mathbb{R}_+, E))\). Thus, there exists an accumulation point \(\mathcal{X}^*\) such that \(\mathcal{X}^*|_{[0, T]} = \mathcal{X}|_{[0, T]}\) in law. In the following we show that a.s. \(\mathcal{X}^* = \mathcal{X}^0\) where \(\mathcal{X}^0\) denotes the unique law of a \(p'\)-solution process of the MVK SPDE  \((A, \mu, \sigma, \eta)\). This then implies that \(\mathcal{X}^N|_{[0, T]} \to \mathcal{X}^0|_{[0, T]}\) in probabilty when seen as random variables in \(M_w^{p^\circ}(C([0, T], E))\). The final claim of the theorem will then follow from Vitali's theorem.

\begin{lemma}\label{lem: approx D(A)}  
	There exists a countable set \(D \subset D(A^*)\) such that for every \(y^* \in D(A^*)\) there exists a sequence \(y^*_1, y^*_2, \dots \in D\) with \(y^*_n \to y^*\) and \(A^* y^*_n \to A^* y^*\). \end{lemma}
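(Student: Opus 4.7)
The plan is to show that $D(A^\ast)$ is separable for the graph norm $\|\cdot\|_E + \|A^\ast \cdot\|_E$; any countable dense subset then serves as $D$.

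The cleanest route uses the resolvent. Since $A$ generates a $C_0$-semigroup on the Hilbert space $E$, it is densely defined and closed, and by Hille--Yosida there exist $M\ge 1$ and $\omega\in\mathbb{R}$ such that $(\omega,\infty)\subset\rho(A)$. Because $E$ is a Hilbert space, $A^\ast$ is closed, densely defined, and its resolvent set equals the complex conjugate of $\rho(A)$; in particular $(\omega,\infty)\subset\rho(A^\ast)$. Fix any $\lambda>\omega$ and set
\[
R \triangleq (\lambda I - A^\ast)^{-1}\in L(E),\qquad A^\ast R = \lambda R - I,
\]
so that $R$ maps $E$ continuously into $D(A^\ast)$.

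Now let $\{e_n:n\in\mathbb{N}\}$ be a countable dense subset of $E$ (it exists because $E$ is separable), and define
\[
D \triangleq \{\, R e_n : n\in\mathbb{N}\,\}\subset D(A^\ast).
\]
Given $y^\ast\in D(A^\ast)$, put $z\triangleq \lambda y^\ast - A^\ast y^\ast\in E$, so $y^\ast=Rz$. Pick a subsequence $z_n\in\{e_k\}$ with $z_n\to z$ in $E$, and set $y^\ast_n\triangleq Rz_n\in D$. By continuity of $R$ on $E$,
\[
y^\ast_n = Rz_n \longrightarrow Rz = y^\ast,\qquad A^\ast y^\ast_n = (\lambda R-I)z_n \longrightarrow (\lambda R - I)z = A^\ast y^\ast,
\]
which is the required approximation.

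The only nontrivial input is that $(\omega,\infty)\subset\rho(A^\ast)$; once that is in hand, the construction is routine. One could equivalently argue that the graph of $A^\ast$ is a closed subspace of the separable Hilbert space $E\times E$, hence separable, and pick any countable dense set in the graph norm, but going through the resolvent makes the approximations explicit and the proof self-contained.
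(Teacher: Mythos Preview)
Your proof is correct and essentially identical to the paper's own argument: both fix a real $\lambda$ in the resolvent set of $A$ (hence of $A^*$), take a countable dense subset of $E$, push it through $(\lambda - A^*)^{-1}$ to obtain $D$, and use the identity $A^*(\lambda - A^*)^{-1} = \lambda(\lambda - A^*)^{-1} - I$ to get the required convergence. The only cosmetic difference is that you spell out why $\lambda \in \rho(A^*)$ via $\rho(A^*) = \overline{\rho(A)}$, whereas the paper just cites Pazy for this.
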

\begin{proof}
As \(E\) is assumed to be separable, there exists a countable dense subset \(F\). 
As \(A\) generates a \(C_0\)-semigroup, there exists a \(\lambda > 0\) in the resolvent set of \(A\) and hence also in the resolvent set of \(A^*\), see \cite[Theorem~1.5.3, Lemma 1.10.2]{pazy}.
Now, set
\[
D \triangleq \big\{ (\lambda - A^*)^{-1} x \colon x \in F\big\} \subset D(A^*). 
\]
Obviously, \(D\) is countable. Let \(y^* \in D(A^*)\) and set \(x \triangleq \lambda y^* - A^* y^* = (\lambda - A^*) y^*\). As \(F\) is dense, there exists a sequence \(x_1, x_2, \dots \in F\) such that \(x_n \to x\). Now, set \(y^*_n \triangleq (\lambda - A^*)^{-1} x_n \in D\) for \(n \in \mathbb{N}\). We have \(y^*_n \to (\lambda - A^*)^{-1} x = y^*\) as \((\lambda - A^*)^{-1} \in L(E)\). Furthermore, \(- A^* y^*_n = (\lambda - A^*) y^*_n - \lambda y^*_n = x_n - \lambda y^*_n \to x - \lambda y^* = - A^* y^*\). This shows the claim.
\end{proof}

Let \(\mathcal{C} \subset C_b(E)\) be a countable set which is measure determining (\cite[Proposition 3.4.2]{EK}).
Let \(\mathcal{G} \triangleq \{g^n_1, g^n_2 \colon n \in \mathbb{N}\}\), where \(g^n_1, g^n_2 \in C_c^2(\mathbb{R})\) are such that \(g^n_1 (x) = x\) and \(g^n_2(x) = x^2\) for \(|x| \leq n\).

We realize \(\mathcal{X}^*\) on a probability space \((\Omega, \mathcal{F}, P)\). 
Let \(G\) be the set of all \(\omega \in \Omega\) such that \(\mathcal{X}^* |_{[0, M]}(\omega) \in M^{p'}_w (C([0, M], E))\) for all \(M \in \mathbb{N}\), \(\mathcal{X}^*_0 (\omega) = \eta\) and \(Z(\mathcal{X}^*(\omega)) = 0\) for all \(s, t \in \mathbb{Q}_+,\)  \(s < t, y^* \in D, g \in \mathcal{G}, m \in \mathbb{N}, t_1, \dots, t_m \in [0, s] \cap \mathbb{Q}_+\) and \(h_1, \dots, h_m \in \mathcal{C}\).
\begin{lemma}
	\(P(G) = 1\).
\end{lemma}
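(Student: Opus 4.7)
The plan is to exploit countability: $G$ is a countable intersection of events, since $D$, $\mathcal{G}$, $\mathbb{Q}_+$, $\mathbb{N}$, and $\mathcal{C}$ are all countable. It therefore suffices to verify that each atomic defining condition holds with probability one, and then take the intersection.

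First I would handle the initial-value condition $\mathcal{X}^*_0 = \eta$. The evaluation map $\omega \mapsto \omega(0)$ from $C(\mathbb{R}_+, E)$ to $E$ is continuous, so the induced push-forward $\nu \mapsto \nu \circ \X_0^{-1}$ is continuous from $M_c(C(\mathbb{R}_+, E))$ to $M_c(E)$. Since $\mathcal{X}^N \to \mathcal{X}^*$ in law in $M_c(C(\mathbb{R}_+, E))$ and $\mathcal{X}^N_0 \to \delta_\eta$ in $M_c(M_c(E))$ by \ref{I}, the continuous mapping theorem gives $\mathcal{X}^*_0 \sim \delta_\eta$, i.e., almost surely $\mathcal{X}^*_0 = \eta$.

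Second, for the integrability condition I would reproduce the computation \eqref{eq: moment ineq final} on each horizon $[0, M]$. The functional $\nu \mapsto \int d_M(\omega, 0)^{p'}\, \nu(d\omega)$ is lower semicontinuous on $M_c(C([0, M], E))$, so Fatou's lemma together with the uniform moment bound \eqref{eq: moment bound prop chaos pf} yields $E[\int d_M(\omega, 0)^{p'}\, \mathcal{X}^*(d\omega)] < \infty$, and hence almost surely $\mathcal{X}^*|_{[0, M]} \in M^{p'}_w(C([0, M], E))$. A countable intersection over $M \in \mathbb{N}$ handles all horizons simultaneously.

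Third, for the vanishing of $Z$ I would fix each tuple $(s, t, y^*, g, m, t_1, \dots, t_m, h_1, \dots, h_m)$ drawn from the appropriate countable index set, choose any $T \in \mathbb{N}$ with $T > t$, and appeal to Step~3: since $\mathcal{X}^*|_{[0, T]}$ coincides with $\mathcal{X}|_{[0, T]}$ in law and the functional $Z$ depends only on the restriction to $[0, T]$, the identity $Z(\mathcal{X}) = 0$ almost surely transfers to $\mathcal{X}^*$. Intersecting over the countable parameter set completes the proof. I do not expect any genuine obstacle; the substantive analytic work was already carried out in Steps~1--3, and what remains is essentially measurability together with the bookkeeping that passes from a finite-horizon accumulation point to the infinite-horizon $\mathcal{X}^*$ via the distributional identification $\mathcal{X}^*|_{[0, T]} \sim \mathcal{X}|_{[0, T]}$.
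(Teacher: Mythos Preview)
Your proposal is correct and follows essentially the same approach as the paper: both argue that $G$ is a countable intersection of full sets, handling the initial condition via \ref{I}, the $p'$-integrability via the moment bound \eqref{eq: moment ineq final}, and the vanishing of $Z$ via Step~3 transferred through the distributional identification of $\mathcal{X}^*$ restricted to finite horizons. The only cosmetic difference is that the paper, for each horizon $M$, explicitly extracts a further subsequence of $\{\mathcal{X}^{N_m}|_{[0,M]}\}$ converging in the Wasserstein topology to some $\mathcal{X}^\circ$ (rather than reusing the symbol $\mathcal{X}$, which was tied to the single fixed $T$ from the start of the proof) before invoking Step~3; your phrasing ``$\mathcal{X}^*|_{[0,T]} \sim \mathcal{X}|_{[0,T]}$'' implicitly performs the same bookkeeping.
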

\begin{proof}
	Note that a.s. \(\mathcal{X}^*_0 = \eta\) thanks to assumption \ref{I}. 
Fix \(M \in \mathbb{N}\) and suppose that \(\mathcal{X}^{N_m} \to \mathcal{X}^*\) in \(M_c(M_c(C(\mathbb{R}_+, E)))\) as \(m \to \infty\). Recalling \eqref{eq: moment ineq final}, we get a.s. \(\mathcal{X}^*|_{[0, M]} \in M^{p'}_w(C([0, M], E))\).
By Step~2,  there exists a subsequence of \(\{\mathcal{X}^{N_m}|_{[0, M]} \colon m \in \mathbb{N}\}\) which converges to a limit \(\mathcal{X}^\circ\) in \(M_c(M^{p^\circ}_w(C([0, M], E)))\) and we have \(\mathcal{X}^*|_{[0, M]} = \mathcal{X}^\circ\) in law. 
Now, Step~3 yields that a.s. \(Z(\mathcal{X}^*) = 0\), where \(Z\) is defined with \(T = M\). In summary, \(G\) is the intersection of countably many full sets and therefore a full set by itself. This is the claim. 
\end{proof}

Take \(\omega \in G\). For every \(y^*\in D\), a monotone class argument and similar considerations as in Step~5 of the proof for \Cref{theo: main} show that 
\begin{align*}
\langle \X, y^*\rangle_E - \langle \X_0, y^*\rangle_E- \int_0^\cdot \big( \langle \X_s, A^* y^*\rangle_E + \langle \mu (s, \X_s, \mathcal{X}^*_s(\omega)), y^* \rangle_E \big) ds
\end{align*}
is a local \(\mathcal{X}^*(\omega)\)-martingale (on the space \(C(\mathbb{R}_+, E)\) endowed with the natural filtration generated by the coordinate process \(\X\)) with quadratic variation process
\(
\int_0^\cdot \| \sigma^* (s, \X_s, \mathcal{X}^*_s(\omega)) y^*\|^2_H ds.
\)
Using \Cref{lem: approx D(A)}  and the fact that ucp (uniformly on compacts in probability) limits of continuous local martingales are again continuous local martingales, we can conclude that the same holds for all \(y^* \in D(A^*)\).
As \(D(A^*)\) separates points, the representation theorem \cite[Theorem 3.1]{doi:10.1142/S0219025707002816} and the equivalence of the weak and mild formulation as given by \cite[Theorem~13]{MR2067962} yield that \(\mathcal{X}^*(\omega)\) is a \(p'\)-solution measure of the MKV SPDE~\((A, \mu, \sigma, \eta)\).

Consequently, by assumption \ref{UL}, a.s. \(\mathcal{X}^* = \mathcal{X}^0\).
We conclude that \(\mathcal{X}^N |_{[0, T]} \to \mathcal{X}^0|_{[0, T]}\) in \(M_c(M_w^{p^\circ}(C([0, T], E)))\) as \(N \to \infty\), and therefore also \(\mathsf{w}^{p^\circ}_T (\mathcal{X}^N, \mathcal{X}^0) \to 0\) in probability, where \(\mathsf{w}^{p^\circ}_T\) denotes the \(p^\circ\)-Wasserstein metric on \(M^{p^\circ}_w(C([0, T], E))\).

Let us show that the family \(\{|\mathsf{w}^{p^\circ}_T (\mathcal{X}^N, \mathcal{X}^0)|^{p^\circ} \colon N \in \mathbb{N}\}\) is uniformly integrable. In this case, \eqref{eq: new WS metric conv} follows from Vitali's theorem. We estimate
\begin{align*}
	E \big[ \big(\big|\mathsf{w}^{p^\circ}_T (\mathcal{X}^N, \delta_0)\big|^{p^\circ} \big)^{(p'/p^\circ)} \big] &= E \Big[ \Big( \frac{1}{N} \sum_{i = 1}^N \sup_{s \leq T} \|X^{N, i}_s\|^{p^\circ}_E \Big)^{p'/p^\circ} \Big]
	\\&\leq E \Big[ \frac{1}{N} \sum_{i = 1}^N \sup_{s \leq T} \|X^{N, i}_s\|^{p'}_E \Big] 
	\\&= E \Big[ \sup_{s \leq T} \|X^{N, 1}_s\|^{p'}_E\Big],
\end{align*}
where we use H\"older's inequality in the second line. Since \(p'/p^\circ > 1\) and \(\mathcal{X}^0 \in M^{p'}_w (C([0, T], E))\), \eqref{eq: moment bound prop chaos pf} yields that the family \(\{|\mathsf{w}^{p^\circ}_T (\mathcal{X}^N, \mathcal{X}^0)|^{p^\circ} \colon N \in \mathbb{N}\}\) is uniformly integrable. Consequently, \eqref{eq: new WS metric conv} holds.
Finally, the chaotic property follows from \cite[Proposition I.2.2]{10.1007/BFb0085169}. 
\qed

\appendix 

\section{An Existence and Uniqueness Result for classical SPDEs}
Let \(\mu \colon \mathbb{R}_+ \times E \to E\) and \(\sigma \colon \mathbb{R}_+ \times E \to L(H, E)\) be Borel functions, and take a constant \(0 < \alpha < 1/2\).
The following theorem should be compared to \cite[Theorem 7.6]{DePratoEd1}. Its proof follows the standard path but for completeness we outline the argument.
\begin{theorem} \label{theo: DZ}
	Assume that for every \(T > 0\) there exist Borel functions \(\f = \f_T \colon (0, T] \to [0, \infty]\) and \(\g = \g_T \colon (0, T] \to [0, \infty]\) such that 
	\[
	\int_0^T \Big(\Big[\frac{\f (s) }{s^{\alpha}} \Big]^2 + \big[ \g (s) \big]^{p / ( p - 1 )} \Big) ds < \infty,
	\] 
	and 
	\begin{align*}
\|S_t(\sigma (s, x) - \sigma (s, y))\|_{L_2(H, E)} &\leq \f (t) \|x - y\|_E,\\
	\|S_t (\mu (s, x) - \mu (s, y))\|_E &\leq \g (t) \|x - y\|_E , \\
 \|S_t \sigma (s, x)\|_{L_2(H, E)} &\leq \f (t) (1 + \|x\|_E), \\
 	\|S_t \mu (s, x)\|_E &\leq \g (t) (1 + \|x\|_E), 
	\end{align*}
	for all \(0 < t, s \leq T\) and \(x, y \in E\). Then, for any \(\eta \in M_c(\eta)\), on any driving system \((\B,W)\) there exists a unique, up to indistinguishability, continuous mild solution process \(X\) to the SPDE
	\[
	d X_t = A X_t dt + \mu (t, X_t) dt + \sigma (t, X_t) d W_t, \quad X_0 \sim \eta.
	\]
Moreover, for every \(p > 1/\alpha, T> 0\) and \(\eta \in M^p_w(E)\), 
\[
E \Big[ \sup_{s \leq T} \|X_s\|^p_E \Big] < \infty.
\]
\end{theorem}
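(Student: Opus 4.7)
Fix any $p > 1/\alpha$ (the hypotheses grant integrability of $[\mathfrak{f}(s)/s^\alpha]^2$ and $\mathfrak{g}(s)^{p/(p-1)}$ for such~$p$) and a finite horizon $T > 0$. I will run a standard Banach fixed point argument on the space
\[
\mathcal{Z}_T \triangleq \bigl\{Y \in L^p(\Omega ; C([0,T], E)) \text{ adapted with continuous paths}\bigr\}, \qquad \|Y\|_{\mathcal{Z}_T} \triangleq \bigl(E[\sup_{s \leq T}\|Y_s\|_E^p]\bigr)^{1/p},
\]
for the map
\[
\Phi(Y)_t \triangleq S_t\xi_0 + \int_0^t S_{t-s}\mu(s, Y_s)\,ds + \int_0^t S_{t-s}\sigma(s, Y_s)\,dW_s,
\]
where $\xi_0$ is an $\mathcal{F}_0$-measurable initial value with law $\eta$. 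Throughout I regard $\xi_0$ first as bounded; the unbounded case is recovered at the end by a standard localization.

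\textbf{Step 1 (self-map and contraction).} The first ingredient is the factorization estimate \Cref{lem: factorization}, which, applied with $\phi_s = \sigma(s, Y_s)$ and $\psi_s = 1 + \|Y_s\|_E$, yields both continuity of $\Phi(Y)$ and the bound
\[
E\Bigl[\sup_{s \leq t}\Bigl\|\int_0^s S_{s-r}\sigma(r, Y_r)\,dW_r\Bigr\|_E^p\Bigr] \leq C_{\mathfrak{f},p,T}\int_0^t E[(1+\|Y_s\|_E)^p]\,ds,
\]
and an analogous bound with $\sigma(r,Y^1_r) - \sigma(r, Y^2_r)$ and $\|Y^1_s - Y^2_s\|_E$ on the right. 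For the drift, H\"older's inequality with conjugate exponents $p$ and $p/(p-1)$ gives, using the assumed bound on $\|S_{t-s}\mu(s, Y_s)\|_E$,
\[
\Bigl\|\int_0^t S_{t-s}\mu(s,Y_s)\,ds\Bigr\|_E^p \leq \Bigl(\int_0^T \mathfrak{g}(s)^{p/(p-1)}\,ds\Bigr)^{p-1}\int_0^t (1+\|Y_s\|_E)^p\,ds,
\]
together with the Lipschitz analogue. Combining the two estimates shows that $\Phi$ maps $\mathcal{Z}_T$ into itself and satisfies
\[
\|\Phi(Y^1) - \Phi(Y^2)\|_{\mathcal{Z}_{T_0}}^p \leq K(T_0)\,\|Y^1 - Y^2\|_{\mathcal{Z}_{T_0}}^p
\]
for all $0 < T_0 \leq T$, where $K(T_0)$ is composed of $\int_0^{T_0}[\mathfrak{f}(s)/s^\alpha]^2\,ds$ and $\int_0^{T_0}\mathfrak{g}(s)^{p/(p-1)}\,ds$, both of which tend to $0$ as $T_0 \to 0$ by dominated convergence. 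Choosing $T_0$ small enough that $K(T_0) < 1$, Banach's theorem provides a unique fixed point $X$ on $[0, T_0]$. Iterating on $[T_0, 2T_0], [2T_0, 3T_0],\dots$, which is possible because $T_0$ depends only on $\mathfrak{f}, \mathfrak{g}$ (not on the initial value), produces a continuous mild solution on $[0,T]$, and then on $\mathbb{R}_+$ by exhaustion.

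\textbf{Step 2 (general initial data, uniqueness and moment bound).} To remove the boundedness assumption on $\xi_0$, write $\xi_0 = \sum_{n \in \mathbb{N}} \xi_0 \mathbf{1}_{\{n-1 < \|\xi_0\|_E \leq n\}}$; for each $n$ Step 1 produces a solution $X^n$ starting from $\xi_0 \mathbf{1}_{\{\|\xi_0\|_E \leq n\}}$, and the pathwise uniqueness established below, applied on the $\mathcal{F}_0$-measurable event $\{\|\xi_0\|_E \leq n\}$, shows the $X^n$ are consistent; gluing them gives $X$. For uniqueness, given two continuous mild solutions $X, Y$ on the same driving system, set $\tau_k \triangleq \inf\{t \colon \|X_t\|_E \vee \|Y_t\|_E \geq k\}$; the same factorization/H\"older estimates applied to $X^{\tau_k} - Y^{\tau_k}$ produce
\[
E\bigl[\sup_{s \leq t \wedge \tau_k}\|X_s - Y_s\|_E^p\bigr] \leq C\int_0^t E\bigl[\sup_{r \leq s \wedge \tau_k}\|X_r - Y_r\|_E^p\bigr]\,ds,
\]
so Gronwall yields $X = Y$ on $[0, \tau_k]$, and sending $k \to \infty$ concludes. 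The moment bound follows directly: for $\eta \in M^p_w(E)$ Step 1 applies without truncation and delivers $X \in \mathcal{Z}_T$.

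\textbf{Main obstacle.} The real work is the control of the stochastic convolution when $\sigma$ itself need not be Hilbert--Schmidt valued; only $S_t\sigma$ is, and with a time-singular norm bound $\mathfrak{f}(t)$. This rules out a direct appeal to Burkholder and forces the use of the factorization identity recalled in Step 0 of the proof of \Cref{theo: main}; the constraint $p > 1/\alpha$ is exactly what makes the deterministic operator $R_\alpha$ bounded on $L^p([0,T], E)$. Balancing the singular drift kernel $\mathfrak{g}$ against the singular diffusion kernel $\mathfrak{f}/s^\alpha$ so that both contraction constants shrink simultaneously as $T_0 \downarrow 0$ is the only delicate point; everything else is routine once this quantitative bookkeeping is in place.
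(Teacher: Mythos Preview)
Your proposal is correct and follows the same overall scheme as the paper: define the solution map on a space of continuous adapted processes with finite $L^p$-sup norm, control the stochastic convolution via the factorization estimate (\Cref{lem: factorization}) and the drift via H\"older's inequality with exponent $p/(p-1)$, then obtain a fixed point. The one genuine difference is how the fixed point is produced. You pick $T_0$ small enough that the map is a strict contraction on $[0,T_0]$ and then concatenate solutions on successive subintervals; the paper instead runs Picard iteration $X^n = I(X^{n-1})$ on the whole of $[0,T]$, iterates the integral inequality to $\Phi_T(X^{n-1}, X^n) \leq (CT)^n/n!\,\Phi_T(X^0, X^1)$, and extracts a.s.\ convergence of the sequence by a Borel--Cantelli argument. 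Your route is marginally more direct (Banach's theorem in one shot) but requires the observation that $T_0$ depends only on $\f,\g$ and not on the initial condition, so the same step size works on every subinterval; the paper's Picard argument trades that bookkeeping for the summability estimate.

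One caution on your uniqueness step: the localization via $\tau_k = \inf\{t \colon \|X_t\|_E \vee \|Y_t\|_E \geq k\}$ is delicate for mild solutions, since stopping turns the kernel into $S_{t\wedge\tau_k - s}$ with a random time argument, and the factorization bound does not apply directly to that object. The paper sidesteps this by proving uniqueness via Gronwall for solutions already known to lie in $\mathcal{H}^p$, and then treating general $\eta \in M_c(E)$ through the $\cF_0$-measurable truncation $\xi \mathbf{1}_{\{\|\xi\|_E \leq n\}}$ of the initial value rather than through path stopping. That device---multiplying the equation by an $\cF_0$-indicator rather than stopping at a path-dependent time---is the cleaner way to close the argument here.
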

Here, a mild solution is meant to be defined in the usual sense, i.e. similar to \Cref{def: SPDE classical} without the coefficient \(\gamma\).

\begin{proof}[Sketch of Proof]
	We start by proving the second part of the theorem. Let \(p > 1 / \alpha\) and assume that \(\eta \in M^p_w (E)\).
	Take a completed filtered probability space \((\Omega, \cF, (\cF_t)_{t \geq 0}, P)\) which supports a cylindrical Brownian motion \(W\) and an \(\cF_0\)-measurable \(\xi\) such that \(\xi \sim \eta\).
	Moreover, define \(\cH^p\) to be the space of continuous \(E\)-valued processes \(Y = (Y_t)_{t \geq 0}\) such that
	\[
	E \Big[ \sup_{s \leq T} \|Y_s\|^p_E \Big] < \infty, \quad \forall T > 0.
		\]
		Define a map \(I \colon \cH^p \to \cH^p\) by 
		\[
		I (Y) (t) \triangleq S_t \xi + \int_0^t S_{t - s} \mu (s, Y_s) ds + \int_0^t S_{t - s} \sigma (s, Y_s) d W_s, \quad t \in\mathbb{R}_+.
		\]
		Let us elaborate in more detail that \(I (\cH^p) \subset \cH^p\). First of all, 
		\[
		E \Big[ \sup_{s \leq T} \|I (Y) (s)\|^p_E\Big] < \infty, \quad \forall T > 0, 
		\]		
		follows from the linear growth assumptions, \Cref{lem: factorization} and the estimate 
		\begin{align*}
		E \Big[ \sup_{s \leq T} \Big\| \int_0^t S_{t - s} \mu (s, Y_s) ds \Big\|^p_E \Big] &\leq E \Big[ \sup_{s \leq T} \Big( \int_0^t \g (t - s) (1 + \|Y_s\|_E) ds \Big)^p \Big] 
		\\&\leq E \Big[ \Big(\int_0^T \big[\g (s)\big]^{p / (p - 1)} ds \Big)^{p - 1} \int_0^T ( 1 + \|Y_s\|_E )^p ds \Big]
		\\&\leq \Big(\int_0^T \big[\g (s)\big]^{p / (p - 1)} ds \Big)^{p - 1} 2^{p + 1}T \Big( 1 +  E \Big[ \sup_{s \leq T} \|Y_s\|^p_E \Big] \Big),
		\end{align*}
		which uses H\"older's inequality. While the first two terms in the definition of \(I (Y)\) are clearly continuous (\cite[Lemma~6.2.9]{roeckner15}), the last integral is continuous thanks to \Cref{lem: factorization}. This shows that \(I (\cH^p) \subset \cH^p\). Next, for \(Y, Z \in \cH^p\) and \(T > 0\) set 
		\[
		\Phi_T (Y, Z) \triangleq E \Big[ \sup_{s \leq T} \|Y_s - Z_s\|^p_E \Big].
		\]
		For every \(T > 0\), using the Lipschitz hypothesis and similar arguments are above, we obtain 
		\begin{align}\label{eq: contraction type ineq}
		\Phi_T ( I (Y), I (Z) ) \leq C \int_0^T E \big[ \|Y_s - Z_s \|_E^p \big] ds \leq C \int_0^T \Phi_s ( Y, Z ) ds,
		\end{align}
		where the constant depends on \(T, \g = \g_T\) and \(\f = \f_T\). Define now inductively a sequence \(X^0, X^1,\) \(X^2, \dots\) such that \(X^0 \triangleq S \xi\) and \(X^n \triangleq  I (X^{n - 1})\) for \(n = 1, 2, \dots\). It follows from \eqref{eq: contraction type ineq} and induction that 
		\[
		\Phi_T(X^{n - 1}, X^n) \leq \frac{C^n T^n}{n!} \Phi_T(X^0, X^1), \quad n = 1, 2, \dots, \ \ T > 0.
		\]
		Consequently, by a Borel--Cantelli argument (\cite[Theorem 5.2.9]{KaraShre}), we deduce that the sequence \(X^1, X^2, \dots\) converges a.s. in the local uniform topology to a continuous process \(X\).
		Furthermore, we obtain that 
		\begin{align*}
		E \Big[ \sup_{n \in \mathbb{N}} \sup_{s \leq T} \|X^n_s - \xi\|^p_E \Big] &\leq \sum_{n = 1}^\infty n^{p + 1} E \Big[ \sup_{s \leq T} \|X^n_s - X^{n - 1}_s\|^p_E \Big] 
		\\&\leq \sum_{n = 1}^\infty n^{p + 1} \frac{C^n T^n}{n!} \Phi_T(X^0, X^1) < \infty.
		\end{align*}
		Thus, \(X \in \cH^p\) and the dominated convergence theorem together with \eqref{eq: contraction type ineq} yield that 
		\[
		\Phi_T (I (X), X) = \lim_{n \to \infty} \Phi_T(I(X), X^n) \leq \lim_{n \to \infty} C \int_0^T \Phi_s (X, X^n) ds = 0.
		\]
		We conclude that a.s. \(I (X) = X\), which shows that \(X\) is a continuous mild solution process. 
		Furthermore, \eqref{eq: contraction type ineq} and Gronwall's lemma yield uniqueness up to indistinguishability. 
		
		Finally, let us comment on the general case where \(\eta \in M_c(E)\). For \(m = 1, 2, \dots\), let \(X^m\) be a solution as constructed above for the initial value \(\xi^m \triangleq \xi \1_{\|\xi\|_E \leq m}\). Then, a.s. \(X^n = X^{n + 1}\) on \(\{\|\xi\|_E \leq n\}, n = 1, 2, \dots\). Consequently, \(\lim_{n \to \infty} X^n\) is a.s. well-defined and a continuous mild solution.
\end{proof}
\section{Proof of \Cref{theo: Lip}} \label{sec: pf theo Lip}
By virtue of \Cref{theo: uni}, it suffices to prove the existence of a \(p\)-solution process on any given driving system \((\B, W)\). 
We use a classical argument based on a fixed point theorem (see \cite[Theorem 4.21]{Carmona18} for the argument in finite dimensions with finite time horizon). In the following we show existence on a finite time interval \([0, T]\) with a random initial value \(\xi_0 \sim \eta\). The existence of a global solution follows from the local result by pasting.
Let \(\gamma \in C([0, T], M^p_w (E))\). By \Cref{theo: DZ} (in case \(p > 1/\alpha\) and \ref{L1} hold) and \cite[Theorem 3.3]{gawarecki2010stochastic} (in case \(p \geq 2\), \ref{L2} holds and \(S\) is a generalized contraction), there exists a continuous mild solution process \(X^{(\gamma)}\) to the SPDE 
\[
d X_t = A X_t dt+ \mu(t, X_t, \gamma_t) d t + \sigma (t, X_t, \gamma_t) d W_t, \quad X_0 = \xi_0, 
\]
such that
\[
E \Big[ \sup_{s \leq T} \|X^{(\gamma)}_s\|_E^p\Big] < \infty.
\]
Furthermore, by the Yamada--Watanabe theorem \cite[Theorem 2]{MR2067962}, the law of \(X^{(\gamma)}\) is fully characterized by \(A, \mu, \sigma, \eta\) and \(\gamma\).
We now define a map \(\Phi \colon C([0, T], M^p_w (E)) \to C([0, T], M^p_w (E))\) by 
\[
\Phi (\gamma) (t) = P \circ (X^{(\gamma)}_t)^{-1}, \quad t \leq T.
\]
Let \(\gamma, \gamma' \in C([0, T], M^p_w (E))\). 
As in the proof of \Cref{lem: uni main tech}, we obtain the estimate
\begin{align*}
E \big[ \|X^{(\gamma)}_t - X^{(\gamma')}_t \|^p_E \big] &\leq C \Big(\int_0^t E \big[ \|X_s^{(\gamma)} - X_s^{(\gamma')}\|^p_E\big] ds + \int_0^t |\mathsf{w}_p (\gamma (s), \gamma'(s))|^p ds\Big), \quad t \leq T.
\end{align*}
Thus, Gronwall's lemma yields that 
\[
|\mathsf{w}_p(\Phi(\gamma) (t), \Phi(\gamma') (t))|^p \leq E \big[ \|X^{(\gamma)}_t - X^{(\gamma')}_t \|^p_E \big] \leq C \int_0^t |\mathsf{w}_p (\gamma (s), \gamma'(s))|^p ds.
\]
Induction yields for every \(k \in \mathbb{N}\) that
\begin{align*}
\sup_{s \leq T} |\mathsf{w}_p(\Phi^k(\gamma) (s),\Phi^k(\gamma') (s))|^p  &\leq C^k \int_0^T \frac{(T - s)^{(k - 1)}}{(k - 1)!} |\mathsf{w}_p (\gamma (s), \gamma'(s))|^p ds
\\&\leq   \frac{C^kT^k}{k!} \sup_{s \leq T}|\mathsf{w}_p (\gamma (s), \gamma'(s))|^p.
\end{align*}
Thus, there exists an \(N \in \mathbb{N}\) such that \(\Phi^k\) is a contraction on the Polish space \(C([0, T], M^p_w(E))\) for all \(k \geq N\). Thanks to the theorem in \cite{Bryant1968ARO}, this yields that \(\Phi\) has a fixed point and consequently, restricted to the time interval \([0, T]\) there exists a solution process to the MKV SPDE with coefficients  \((A, \mu, \sigma, \eta)\). 

Finally, the existence of a global solution process, i.e. a solution process defined on the infinite time interval \(\mathbb{R}_+\), follows by pasting: Let \(X^0 \triangleq \xi_0\) and for \(n \in \mathbb{N}\) let \(X^n\) be a solution process with coefficients \(\mu (\cdot + n - 1, \cdot, \cdot), \sigma (\cdot + n - 1, \cdot, \cdot)\), initial value \(X^{n - 1}_{n - 1}\) and with driving noise \(W^n \triangleq W_{\cdot + n - 1} - W_{n - 1}\). Finally, define
\[
X_t \triangleq \sum_{k = 1}^\infty X^k_{t - (k - 1)} \1_{k - 1 \leq t < k}, \qquad t \in \mathbb{R}_+.
\]
For \(k < t \leq k + 1\) we compute that
\begin{align*}
X_t &= S_{t - k}X^k_k + \int_0^{t - k} S_{t - k - s} \mu (s + k, X^{k + 1}_s, P^{X^{k + 1}}_s) ds \\&\hspace{3cm}+ \int_0^{t - k} S_{t - k - s} \sigma (s + k, X^{k + 1}_s, P^{X^{k + 1}}_s) d W^{k + 1}_s
\\&= S_{t - k}X^k_k + \int_k^{t} S_{t - s} \mu (s, X^{k + 1}_{s - k}, P^{X^{k + 1}}_{s - k}) ds + \int_k^{t} S_{t -s} \sigma (s, X^{k + 1}_{s - k}, P^{X^{k + 1}}_{s - k}) d W_s
\\&= S_{t - k}X^k_k + \int_k^{t} S_{t - s} \mu (s, X_{s}, P^{X}_{s}) ds + \int_k^{t} S_{t -s} \sigma (s, X_{s}, P^{X}_{s}) d W_s.
\end{align*}
Thus, by induction, \(X\) is a solution process to the MKV SPDE with coefficients \((A, \mu, \sigma, \eta)\).
\qed

\section{Proof of \Cref{prop: weakly interact existence}} \label{sec: pf prop ex}
Fix \(N \in \mathbb{N}\) and consider the (separable) Hilbert spaces
\(
\widetilde{E} \triangleq \bigoplus_{i = 1}^N E\) and \(\widetilde{H} \triangleq \bigoplus_{i = 1}^N H.\)
Moreover, for \(t \in \mathbb{R}_+\) and \(e = (e^1, \dots, e^N) \in \widetilde{E}\) we set 
\begin{align*}
L (e) &\triangleq \frac{1}{N} \sum_{i = 1}^N \delta_{e^i},\\
\widetilde{\mu} (t, e) &\triangleq \bigoplus_{i = 1}^N \mu (t, e^i, L (e)) \in \widetilde{E}, \\ \widetilde{\sigma} (t, e) &\triangleq \bigoplus_{i = 1}^N \sigma (t, e^i, L(e)) \in L (\widetilde{H}, \widetilde{E}).
\end{align*}
It is not hard to check that the process \(\widetilde{W} \triangleq \bigoplus_{i = 1}^N W^i\) is a standard cylindrical Brownian motion. The system of SPDEs associated to the processes \(X^{N, 1}, \dots, X^{N, N}\) can now be written as
\[
d \widetilde{X}_t = \widetilde{A} \widetilde{X}_t dt + \widetilde{\mu}(t,\widetilde{X}_t) dt + \widetilde{\sigma} (t, \widetilde{X}_t) d \widetilde{W}_t, \quad \widetilde{A} \triangleq \bigoplus_{i = 1}^N A,
\]
where \(\widetilde{A}\) generates the \(C_0\)-semigroup \(\widetilde{S} \triangleq \bigoplus_{i = 1}^N S\) on \(\widetilde{E}\).
Thus, by virtue of \Cref{theo: DZ}, the claim of the proposition follows in case \(\widetilde{\mu}\) and \(\widetilde{\sigma}\) satisfy suitable Lipschitz and linear growth conditions, which we check in the following. Take \(T > 0\) and let \(\f = \f_T\) be as in \ref{L1}.
Then, for all \(0 < t, s \leq T\) and \(e = (e^1, \dots, e^N) \in \widetilde{E}\) we get
\begin{align*}
\|\widetilde{S}_t \widetilde{\sigma} (s, e, L (e)) \|_{L_2 (\widetilde{H}, \widetilde{E})}^2 &= \sum_{i = 1}^N \| S_t \sigma (s, e^i, L(e))\|_{L_2(E, H)}^2
\\&\leq \big[\f (t)\big]^2 \sum_{i = 1}^N C \big(1 + \|e_i\|^2_E + \|L(e)\|_2^2\big)
= \big[\f (t)\big]^2\big(CN + 2C \|e\|_{\widetilde{E}}^2\big).
\end{align*}
Similarly, we obtain for all \(0 < t, s \leq T\) and \(e = (e^1, \dots, e^N), f = (f^1, \dots, f^N) \in \widetilde{E}\) that
\begin{align*}
\|\widetilde{S}_t (\widetilde{\sigma} (s, e, L(e)) - \widetilde{\sigma} (t, f, L(f)))\|^2_{L_2(\widetilde{H}, \widetilde{E})} 
\leq \big[\f (t)\big]^2 \big( C \|e - f\|_{\widetilde{E}}^2 + C N |\mathsf{w}_2( L(e) , L(f))|^2 \big).
\end{align*}
We now compute \(\mathsf{w}_2(L(e), L(f))\). Set 
\[
F \triangleq \frac{1}{N} \sum_{i = 1}^N \delta_{(e_i, f_i)}.
\]
Then, \(F(dx \times E) = L(e) (dx), F(E \times dx) = L(f)(dx)\) and
\[
\iint \|x - y\|^2_E F(dx, dy) = \frac{1}{N}\sum_{i = 1}^N \|e_i - f_i\|^2_{E} = \frac{1}{N} \|e - f\|_{\widetilde{E}}^2.
\]
Hence, we obtain
\[
|\mathsf{w}_2(L(e), L(f))|^2 \leq \frac{1}{N} \|e - f\|_{\widetilde{E}}^2,
\]
and finally, 
\[
\|\widetilde{S}_t (\widetilde{\sigma} (s, e, L(e)) - \widetilde{\sigma} (t, f, L(f)))\|^2_{L_2(\widetilde{H}, \widetilde{E})} \leq 2 C \big[\f (t)\big]^2 \|e - f\|_{\widetilde{E}}^2.
\]
We conclude that the coefficient \(\widetilde{\sigma}\) satisfies the linear growth and Lipschitz conditions from \Cref{theo: DZ}. Similar computations show the same for the coefficient \(\widetilde{\mu}\). We omit the remaining details. \qed

\section{Proof of \Cref{theo: POC}} \label{sec: pf POC lip}
We borrow the main idea from the proof of \cite[Theorem~3.3]{Lak18}. By virtue of \Cref{theo: Lip}, let \(Y^i\) be a \(p\)-solution process to the MKV SPDE \((A, \mu, \sigma, \eta)\) on the driving system \((\B, W^i)\) with initial value \(\xi_0^i\). Take \(T > 0\) and denote by \(\mathcal{X}^0_t\) the projection of \(\mathcal{X}^0\) to the time \(t\) value. 
By virtue of the proof of \Cref{lem: uni main tech}, using \Cref{lem: factorization} (in case \(p > 1/\alpha\) and \ref{L1} hold) and part (b) of \cite[Lemma~3.3]{gawarecki2010stochastic} (in case \(p \geq 2\), \ref{L2} holds and \(S\) is a generalized contraction), for all \(t \leq T\) we obtain that  
\begin{align*}
E \Big[ \sup_{s \leq t} \|X^{N, i}_s - Y^i_s \|_E^p \Big] 
&\leq C E \Big[ \int_0^t \big( \|X^{N, i}_s - Y^i_s\|^p_E + |\mathsf{w}_p (\mathcal{X}^N_s, \mathcal{X}^0_s)|^p\big) ds \Big]
\\&\leq  C E \Big[ \int_0^t \Big( \sup_{r \leq s} \|X^{N, i}_r - Y^i_r\|^p_E + |\mathsf{w}_p (\mathcal{X}^N_s, \mathcal{X}^0_s)|^p\Big) ds \Big].
\end{align*}
Thus, Gronwall's lemma yields that 
\begin{align}\label{eq: first gronwall}
E \Big[ \sup_{s \leq t} \|X^{N, i}_s - Y^i_s \|_E^p \Big] \leq C \int_0^t E \big[|\mathsf{w}_p (\mathcal{X}^N_s, \mathcal{X}^0_s)|^p\big] ds, \quad t \leq T.
\end{align}
We set 
\[
\mathcal{Y}^N \triangleq \frac{1}{N} \sum_{i = 1}^N \delta_{Y^i}.
\]
Using the coupling \(\frac{1}{N} \sum_{i = 1}^N \delta_{(X^{N, i}, Y^i)}\), we obtain that
\[
|\mathsf{w}_t^p (\mathcal{X}^N, \mathcal{Y}^N)|^p \leq \frac{1}{N} \sum_{i = 1}^N \sup_{s \leq t}\|X^{N, i}_s - Y^i_s\|^p_E.
\]
Hence, for all \(t \leq T\)
\begin{align*}
E \big[ |\mathsf{w}_t^p (\mathcal{X}^N, \mathcal{X}^0)|^p \big] &\leq C \big(E \big[ |\mathsf{w}_t^p (\mathcal{X}^N, \mathcal{Y}^N)|^p \big] + E \big[ |\mathsf{w}_t^p (\mathcal{Y}^N, \mathcal{X}^0)|^p \big]\big)
\\&\leq C \Big(\int_0^t E \big[|\mathsf{w}_p (\mathcal{X}^N_s, \mathcal{X}^0_s)|^p\big] ds + E \big[ |\mathsf{w}_t^p (\mathcal{Y}^N, \mathcal{X}^0)|^p \big]\Big)
\\&\leq C\Big( \int_0^t E \big[|\mathsf{w}_s^p (\mathcal{X}^N, \mathcal{X}^0)|^p\big] ds + E \big[ |\mathsf{w}_t^p (\mathcal{Y}^N, \mathcal{X}^0)|^p \big]\Big).
\end{align*}
Using Gronwall's lemma once again, we conclude that 
\begin{align*}
E \big[ |\mathsf{w}_T^p (\mathcal{X}^N, \mathcal{X}^0)|^p \big] \leq C E \big[ |\mathsf{w}_T^p (\mathcal{Y}^N, \mathcal{X}^0)|^p \big].
\end{align*}
The r.h.s. converges to zero as \(N \to \infty\) by \cite[Corollary 2.14]{Lak18}, as \(W^1, W^2, \dots\) and the initial values are i.i.d. and so are \(Y^1, Y^2, \dots\). Thus, \eqref{eq: first conv} is proved.

Let us now show the second claim, namely \eqref{eq: second conv}. We deduce from \eqref{eq: first conv} and \eqref{eq: first gronwall} that
\[
E \Big[ \max_{i = 1, \dots, k} \sup_{s \leq T} \|X^{N, i}_s - Y^i_s \|_E^p \Big] \leq C k E \big[ |\mathsf{w}_T^p (\mathcal{X}^N, \mathcal{X}^0)|^p \big]\to 0 
\]
as \(N \to \infty\). This immediately implies \eqref{eq: second conv}. The proof is complete. \qed

%

\bibliographystyle{plain}

\end{document}